\newcommand\ad{\operatorname{ad}}
\newcommand\vol{\operatorname{vol}}
\newcommand\oM{\overline{M}}
\newcommand\ff{\operatorname{ff}}
\newcommand\scl{\operatorname{sl}}
\newcommand\bW{\overline{W}}
\numberwithin{equation}{section}
\newcommand\paperbody%
\newcommand\rbm[1]{\mar{RBM:#1}}
\newcommand\rbmoff{\renewcommand\rbm[1]{}}
\newcommand{\mar}[1]{{\marginpar{\sffamily{\scriptsize #1}}}}
\theoremstyle{plain}
\newtheorem{thm}{Theorem}%[section]
\newtheorem{theorem}{Theorem}[section]
\newtheorem*{thm*}{Theorem}
\newtheorem*{cor*}{Corollary}
\newtheorem*{conj*}{Conjecture}
\newtheorem*{lemma*}{Lemma}
\newtheorem{lemma}[thm]{Lemma}
\newtheorem*{prop*}{Proposition}
\newtheorem{proposition}[thm]{Proposition}
\theoremstyle{definition}
\newtheorem{rems}[thm]{Remarks}
\newtheorem*{defn*}{Definition}
\newtheorem*{rems*}{Remarks}
\newtheorem*{proof*}{Proof}
\newtheorem{prel*}{Preliminaries}
\newtheorem{examples*}{Examples}
\newcommand\ha{\frac12}
\newcommand\ev{\operatorname{ev}}
\newcommand\TT{\mathbb{T}}
\newcommand{\End}{{\rm{End}}}
\newcommand\coF{{}^{\mathcal{C}}\kern-2pt\Lambda}
\newcommand\cFTs{{}^{\Phi}\overline{T}\kern-1pt{}^*}
\newcommand\even{\text{even}}
\newcommand\odd{\text{odd}}
\newcommand\fib{\operatorname{fib}}
\newcommand\Tr{\operatorname{Tr}}
\newcommand\Cl{\operatorname{Cl}}
\newcommand\SpinC{\operatorname{Spin}_{\CC}}
\newcommand\GL{\operatorname{G}}
\newcommand\Ch{\operatorname{Ch}}
\newcommand\Td{\operatorname{Td}}
\newcommand\com[1]{\overline{#1}}
\newcommand\cA{\mathcal{A}}
\newcommand\cF{\mathcal{F}}
\newcommand\cG{\mathcal{G}}
\newcommand\dcG{\dot{\mathcal{G}}}
\newcommand\cL{\mathcal{L}}
\newcommand{\cP}{{\mathcal P}}
\newcommand{\dcP}{\dot{\mathcal P}}
\newcommand\CC{\mathbb C}
\newcommand\ZZ{\mathbb Z}
\newcommand\bbH{\mathbb H}
\newcommand\bbC{\mathbb C}
\newcommand\bbR{\mathbb R}
\newcommand\bbS{\mathbb S}
\newcommand\bbZ{\mathbb Z}
\newcommand\cM{\mathcal M}
\newcommand\cS{\mathcal S}
\newcommand\CIc{{\mathcal{C}}^{\infty}_c}
\newcommand\CI{{\mathcal{C}}^{\infty}}
\newcommand\Diag{\operatorname{Diag}}
\newcommand\cFNs{{}^{\Phi}\overline N\kern-1pt{}^*}
\newcommand\ind{\operatorname{ind}}
\newcommand\tr{\operatorname{tr}}
\newcommand\Id{\operatorname{Id}}
\newcommand\PU{\operatorname{PU}}
\newcommand\Ran{\operatorname{Ran}}
\newcommand\ci{${\mathcal{C}}^\infty$}
\newcommand\dCI{\dot{\mathcal{C}}^{\infty}}
\newcommand\pa{\partial}
\newcommand\inn{\operatorname{int}}
\newcommand\dR{\operatorname{dR}}
\newcommand\Mand{\text{ and }}
\newcommand\Mas{\text{ as }}
\newcommand\Mat{\text{ at }}
\newcommand\Mie{\text{ i.e. }}
\newcommand\Mst{\text{ s.t. }}
\newcommand\tb{\operatorname{tb}}
\begin{document}

\title[Small gerbes]{Index and small bundle gerbes}

\author{Varghese Mathai}
\address{Department of Pure Mathematics, University of Adelaide,
Adelaide 5005, Australia}
\email{mathai.varghese@adelaide.edu.au}
\author{Richard B. Melrose}
\address{Department of Mathematics,
Massachusetts Institute of Technology,
Cambridge, Mass 02139, U.S.A.}
\email{rbm@math.mit.edu}

\dedicatory{Dedicated to the memory of Isadore M. Singer}

\begin{abstract} By a small bundle gerbe we mean a bundle gerbe in the
  sense of Murray defined on a smooth, finite-dimensional, fibre bundle
  over a manifold. We construct such gerbes over compact oriented
  aspherical 3-manifolds, as well as in higher dimensions, generalizing the
  construction of decomposable bundle gerbes in earlier work with
  Singer. For these small bundle gerbes there is a direct index map given in
  terms of either fibrewise pseudodifferential operators, or more
  conveniently fibrewise semiclassical smoothing operators, twisted by the
  simplicial line bundle. We prove the Atiyah-Singer type theorem that this
  realizes the push-forward into twisted K-theory. We also give an
  application via the index of projective families of
  $\operatorname{Spin}_{\bbC}$ Dirac operators, to show the existence of
  obstructions to metrics with large positive scalar curvature.
\end{abstract}

\thanks{{\em Acknowledgments.} The first author was supported by the
  Australian Laureate Fellowship FL170100020.}
  
\keywords{small bundle gerbes, aspherical 3-manifolds}
\subjclass[2010]{Primary 58J40, Secondary 53C08, 53D22}
\maketitle
%\tableofcontents

%\hypersetup{pageanchor=true}
\section*{Introduction}

Bundle gerbes give a geometric realization of integral 3-classes. Such a
class on a space $M$ is represented as the transgression, to the
Dixmier-Douady class, of the Chern class of a simplicial line bundle for a
space with a split surjective map $\phi:F\longrightarrow  {Y}.$ In fact each
3-class can be represented by a bundle gerbe with $F$ an
infinite-dimensional bundle, for instance a path space. We consider below
\emph{small gerbes,} corresponding to the special case where $F$ is a
smooth, compact, fibre bundle over $M.$ As shown by Murray and Stevenson
\cite{MS3} if $M$ is simply connected then only torsion classes arise this
way. Conversely, in \cite{MMS1,MMS2}, it was shown that decomposed
3-classes can be represented by such small gerbes. In \cite{Bigerbes} it is
noted that a 3-class on $M$ is represented by a bundle gerbe over a fibre
bundle $F$ if and only if the 3-class pulls back to be trivial over $F.$ It
follows that if a 3-class on $ {Y}$ lifts to a fibre bundle over $M$ to be
torsion or decomposable then it is represented by a small gerbe. In this
way we introduce small gerbes for 3-classes which are neither torsion nor
decomposable. Although we leave open the question of precisely which
integral 3-classes on a compact manifold are represented by small gerbes we
show for instance that any compact oriented aspherical 3-manifold supports
a small gerbe representing its volume class.

 {If $\xymatrix{Z\ar@{-}[r]&F\ar[r]^{\phi}&M}$ is a smooth fibre bundle,
with compact typical fibre $Z,$ the fibre products
$F^{[k]}=\{(p_1,\dots,p_k)\in F^k;\phi(p_1)=\dots=\phi(p_k)\}$ form a
  simplicial space. The degeneracy maps are the projections off factors and
  the face maps are the diagonal embeddings between two adjacent
  factors. In the case of a gerbe we are mostly interested in the spaces
  with projections $\pi_1,\pi_2:F^{[2]}\longrightarrow F$ (off the right and
left factor respectively) and $\pi_{12},\pi_{13},\pi_{23}:F^{[3]}\longrightarrow
F^{[2]}$ off the right-most, central and left-most factors. A complex line
bundle $J\longrightarrow F^{[2]}$ is simplicial if the combined pull back
is trivial
$s:\pi_{12}^*J\otimes \pi_{13}^*J^{-1}\otimes \pi_{23}^*J\simeq\bbC$ with the
trivialization reducing to the natural trivialization over $F^{[4]}.$
In this finite dimensional case a gerbe defines a twisted version of the
fibre-wise pseudodifferential operators on $F.$} The subspace of smoothing
operators, $\Psi^{-\infty}_{\phi,J}(F)=\cA_J,$ is defined by sections
$A\in\CI(F^{[2]};J\otimes \pi_R^*\Omega ),$ where $\Omega $ is the
(trivial) bundle of fibre densities on $F,$ with composition
\begin{equation}
A\circ B(z,z')=\int_{Z}A(z,\cdot)\circ B(\cdot,z')
=(\pi_{13})_*\left(\pi_{12}^*A\circ \pi_{23}^*B\right).
\label{IBG.9}\end{equation}
 {Here the composite in the integrand includes the simplicial product on
$J,$ $\pi_{12}^*J\otimes \pi_{23}^*J\simeq\pi_{13}^*J.$} We call such a bundle over
$Y,$ arising from a small gerbe, a \emph{smooth 
Azumaya bundle.} It has a completion, $\overline{\cA_J},$ to an Azumaya
bundle, modelled on the compact operators on a Hilbert space, with
Dixmier-Douady class the 3-class represented by the gerbe. Defining  
\begin{equation}
\cG^{-\infty}_{\phi,J}(F)=\{a\in \Psi^{-\infty}_{\phi,J}(F);\Id+a\text{ is invertible}\}
\label{IBG.178}\end{equation}
gives a classifying bundle for the odd twisted K-theory of the base 
\begin{equation}
K^1(M;\overline{\cA_J})=\cG^{-\infty}_{J,\psi}(F)/\text{smooth homotopy.}
\label{IBG.179}\end{equation}

To define and manipulate the twisted index we use a semiclassical version
of the algebra \eqref{IBG.9}. On a fixed compact manifold, $Z,$ the
semiclassical smoothing operators are smooth families of smoothing
operators $A_\epsilon \in\CI((0,1]_\epsilon;\Psi^{-\infty}(Z))$ depending
  on a parameter $\epsilon \in(0,1]$ with distinct singular behaviour at
    $\epsilon =0.$ The kernel of $A_\epsilon\in\CI((0,1]\times
      Z^2;\pi_R^*\Omega )$ is required to vanish rapidly with all
      derivatives as $\epsilon \downarrow0$ away from the diagonal of
      $Z^2;$ as usual $\Omega$ is the density bundle on $Z.$ Near the
      diagonal, in local coordinates, the kernel should take the form
\begin{equation*}
A_\epsilon (z,z')=\epsilon ^{-d}\alpha (\epsilon ,z,\frac{z-z'}\epsilon
)|dz'|,\ d=\dim Z,
\label{IBG.180}\end{equation*}
where $\alpha$ is smooth down to $\epsilon =0$ and is a Schwartz function
in the last variables. These conditions are coordinate-invariant and so
extend naturally to fibre bundles $\phi:F\longrightarrow Y.$ The behaviour
of the kernels is conveniently captured in terms of a blown-up version of the fibre product 
\begin{equation}
F[2,\scl]=[F^{[2]}\times[0,1];\Diag\times\{0\}]\overset{\beta}
\longrightarrow F^{[2]}\times[0,1]
\label{IBG.181}\end{equation}
where they become 
\begin{equation}
\Psi^{-\infty}_{\phi,\scl}(F;W)=\{A\in\epsilon
^{-d}\CI(F[2,\scl];\pi_L^*W\otimes\pi_R^*W'\otimes\pi_R^*\Omega);A\equiv0\Mat H_0\}.
\label{IBG.182}\end{equation}
Here $H_0$ is the lift, or proper transform, of the boundary face
$\epsilon =0$ and $W$ is a complex vector bundle over $F.$ These
form an algebra of operators acting on $\CI(F\times[0,1];W)$ with two significant
ideals 
\begin{equation}
\epsilon ^{\infty}\Psi^{-\infty}_{\phi,\scl}(F;W)\subset \epsilon\Psi^{-\infty}_{\phi,\scl}(F;W).
\label{IBG.183}\end{equation}
The first of these is naturally identified with the families of
smoothing operators $\CI([0,1];\Psi^{-\infty}_{\phi}(F;W))$ which vanish to
infinite order at $\epsilon =0.$ The second ideal corresonds to simple
vanishing at $H_0$ and is captured by the semiclassical symbol giving a
short exact sequence
\begin{equation}\xymatrix{
\epsilon\Psi^{-\infty}_{\phi,\scl}(F;W)\ar[r]&
\Psi^{-\infty}_{\phi,\scl}(F;W)\ar[r]^-{\sigma _{\scl}}&
\dCI(\overline{T^*_\phi F};\hom W).
}
\label{IBG.184}\end{equation}
The image is the space of Schwartz sections on the fibrewise
cotangent bundle $T ^*_\phi F,$ realized as the smooth functions on the
radial compactification vanishing to infinite order at the boundary (`at infinity').
The quotient by the smaller ideal is a $*$ algebra, with product
essentially that of the full symbol algebra of pseudodifferential
operators.

The relevance of these operators for index theory is that there is a
natural map
\begin{equation}
\dcG(\overline{T^*_\phi F};\hom W)=\{a\in \dCI(\overline{T^*_\phi F};\hom
W);\Id+a\text{ is invertible}\}\longrightarrow K^1(T^*_{\psi}F)
\label{IBG.185}\end{equation}
which captures all K-classes when stabilized over $W.$

The (untwisted) semiclassical index map arises by considering the
corresponding group of invertibles 
\begin{equation*}
\cG^{-\infty}_{\phi,\scl}(F;W)=\{A\in
\Psi^{-\infty}_{\phi,\scl}(F;W);\Id+A\text{ invertible}\}
\label{IBG.186}\end{equation*}
for which the symbol map in \eqref{IBG.184} becomes an homotopy equivalence 
\begin{equation}
\cG^{-\infty}_{\phi,\scl}(F;W)\overset{\sigma_{\scl}}\longrightarrow \dcG(\overline{T^*_\phi F};\hom W)
\label{IBG.187}\end{equation}
and the index is given by the equivalence class of the restriction map 
\begin{equation*}
\ind_{\scl}:\cG^{-\infty}_{\phi,\scl}(F;W)\ni A\longmapsto A_{\epsilon =1}.
\label{IBG.190}\end{equation*}

The index map in K-theory is given by stabilization, say over $W=\bbC^N$ for all
$N:$  
\begin{equation}
\xymatrix{
&\dcG(\overline{T^*_{\phi}F};\bbC^N)\ar[r]& K^1(T^*_{\fib}F)\ar@{-->}[dd]^{\ind_{\scl}}\\
\cG^{-\infty}_{\phi,\scl}(F;\bbC^N)\ar@{->>}[ur]^{\sigma _{\scl}}\ar[dr]_{\epsilon =1}\\
&\cG^{-\infty}_{\phi}(F;\bbC^N)\ar[r]&K^1(Y).
}
\label{IBG.188}\end{equation}
We recall below, and elaborate on, the proof from \cite{MMS2} of the
Atiyah-Singer type theorem that this map is the push-forward in K-theory
and so is equivalent to pseudodifferential quantization; this can
be seen directly.

To construct the twisted index map we replace the smoothing operators on
the fibres of $F$ by the twisted smoothing operators in \eqref{IBG.9}. The
symbols of these operators again correspond to the leading singularity of the
kernel at the diagonal of $F^{[2]}.$ Since $J$ is trivial there they
take values in the same, untwisted space as in \eqref{IBG.184} although
this identification does depend on the simplicial product on $J.$
The restriction map, from the analogous group of semiclassical smoothing
operators, at $\epsilon =1$ takes values in the group \eqref{IBG.178} so
resulting in the twisted, semiclassical, map
\begin{equation}
\ind_{J,\scl}:K^1(T^*_{\fib}F)\longrightarrow K^1(M;\cA_J).
\label{IBG.11}\end{equation}
There is a similar construction, using idempotents, in the even case and
then the analogue of the (families) Atiyah-Singer theorem is:

\begin{thm}\label{IBG.13} The index map \eqref{IBG.11} is equal to
  the topological push-forward map.
\end{thm}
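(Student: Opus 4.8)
The plan is to reduce the twisted statement to the untwisted families index theorem recalled in \eqref{IBG.188}, exploiting the fact that the simplicial line bundle $J$ is trivial along the diagonal of $F^{[2]}$ and hence contributes nothing to the symbol. Concretely, I would first establish that the twisted semiclassical algebra $\Psi^{-\infty}_{\phi,\scl,J}(F;W)$ fits into the same short exact sequence as \eqref{IBG.184}, with identical symbol image $\dCI(\overline{T^*_\phi F};\hom W)$; this is essentially definitional but requires checking that the simplicial product on $J$ does not alter the leading asymptotics at $H_0$, only the composition \eqref{IBG.9} at subleading order. Consequently the symbol map on invertibles, $\sigma_\scl:\cG^{-\infty}_{\phi,\scl,J}(F;W)\longrightarrow \dcG(\overline{T^*_\phi F};\hom W)$, remains a homotopy equivalence by the same deformation argument (parametrix construction plus Neumann series) as in the untwisted case, since the obstruction to invertibility lives in the ideal $\epsilon\Psi^{-\infty}_{\phi,\scl,J}(F;W)$, whose $\mathcal{G}$-group is contractible.

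Next I would set up the twisted analogue of the diagram \eqref{IBG.188}: the left edge $\sigma_\scl$ is the homotopy equivalence just described, and the restriction map $\epsilon=1$ now lands in $\cG^{-\infty}_{\phi,J}(F)$, which by \eqref{IBG.179} classifies $K^1(M;\overline{\cA_J})$. This produces the map $\ind_{J,\scl}$ of \eqref{IBG.11} as a genuine map of classifying spaces, so it suffices to identify its homotopy class. The key observation is that the construction is natural with respect to the inclusion of untwisted operators: choosing a local trivialization of the gerbe (equivalently, working over a contractible open $U\subset M$), the twisted algebra restricts to the untwisted one, so over each such $U$ the map $\ind_{J,\scl}$ agrees with $\ind_\scl$. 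The topological push-forward is likewise local in this sense — it is built from the fibrewise Thom isomorphism and Bott periodicity, twisted only by the pullback of $\overline{\cA_J}$, which trivializes locally.

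The main obstacle is the gluing: one must show that the local identifications $\ind_{J,\scl}|_U = \ind_\scl|_U$ patch consistently with the cocycle data of $J$, i.e. that the transition maps on the analytic side (conjugation by the simplicial line bundle transition sections) match those on the topological side (the action of $\overline{\cA_J}$ on twisted K-theory). I would handle this by realizing both sides as classes in the $K$-theory of the smooth Azumaya bundle $\overline{\cA_J}$ and comparing them through the six-term exact sequence relating $K_*(M;\overline{\cA_J})$ to the $K$-theory of the total space $F$, exactly as in \cite{MMS2}; the semiclassical model has the advantage that the deformation to $\epsilon=0$ furnishes an explicit homotopy realizing the push-forward, so the comparison becomes a statement about two homotopic sections of the same fibration rather than an abstract index computation. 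The even case follows by the standard device of passing to idempotents and applying the odd result to suspensions, so I would state it as a corollary of the argument above rather than repeating it.
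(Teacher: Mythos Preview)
Your setup through the symbol map and the twisted analogue of \eqref{IBG.188} is correct and matches the paper's Section~9. However, your proposed route to the identification with push-forward --- local trivialization of the gerbe, local agreement with the untwisted index, then gluing --- is not the paper's argument, and the gluing step you flag as ``the main obstacle'' is not actually resolved by what you write. The appeal to a six-term exact sequence relating $K_*(M;\overline{\cA_J})$ to $K_*(F)$ is too vague: it is not clear which sequence you mean, nor how it would control the global comparison of two push-forward maps that are each defined via a \emph{global} embedding rather than locally.

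More seriously, your sketch does not confront the difficulty the paper singles out explicitly in the Introduction: the embedding proof of Atiyah--Singer passes through $F\hookrightarrow Y\times\bbR^N$, but the small gerbe on $F$ does \emph{not} extend to a gerbe over the trivial bundle $Y\times\bbR^N$. So one cannot simply run the embedding argument with $J$-twisted operators. The paper's resolution is to introduce a second, more general index map --- the \emph{range-twisted} index of Section~10 --- where the operators are ordinary semiclassical smoothing operators on an arbitrary bundle $\psi:G\to Y$ but take values in the Azumaya bundle $\Psi^{-\infty}_J$ pulled back from the base. For this version the embedding proof goes through verbatim (Theorem~\ref{IBG.106}), since $\cA_J$ is just a coefficient bundle on $Y$. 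The twisted index \eqref{IBG.11} is then recovered as the special case $G=F$, where Proposition~\ref{IBG.108} uses the simplicial isomorphism $\pi_{34}^*J\cong\pi_{12}^*J\otimes\pi_{13}^*J^{-1}\otimes\pi_{24}^*J$ over $F^{[2]}\times_Y F^{[2]}$ to identify the two algebras and hence the two index maps. This two-step reduction is what replaces your proposed local-to-global patching, and it avoids the gluing problem entirely.
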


 To prove this we follow the, essentially standard, proof passing through
 the Gysin map using an embedding of $F$ in a trivial bundle $Y\times
 \bbR^N.$ Note however that the small gerbe does not extend from $F$ to
 define a gerbe over the trivial bundle. So we proceed in two steps by
 considering the index map for a general Azumaya bundle, $\cA_J,$
 defined by a small gerbe over $M$ pulled back to any smooth fibre bundle
 $\tilde F\longrightarrow M.$ Now semiclassical (or pseudodifferential)
 quantization leads to an index map
\begin{equation}
  \ind_{\cA_J}:K^*(T^*_{\fib}\tilde F;\phi^*\cA_J)\longrightarrow K^*(M;\cA_J).
\label{IBG.12}\end{equation}
In the case $\tilde F=F,$ corresponding to the small gerbe itself, this
reduces to \eqref{IBG.11}, since $\phi^*\cA_J$ is trivial as an Azumaya
bundle. In general for \eqref{IBG.12} we show that the index map factors
through any embedding and that this gives the standard push-forward map in
twisted K-theory.

In the first section the notion of a bundle gerbe is recalled. The geometry
of 3-manifolds is discussed in Section 2. The behaviour of bundle gerbes
for directed sums is described in Section 3 and the smooth Azumaya bundles
are introduced in Section 4. In Sections 5 and 6 some properties of
semiclassical quantization are given. The semiclassical version of the
families Atiyah-Singer theorem is stated in Section 7 and proved in Section
8. The twisted version follows by close analogy in Section 9 and the
range-twisted version is in Section 10. The Chern character is extended to
appropriate infinite-rank bundles in Section 11 and the odd and twisted
versions follow in Section 12 and 13. In Section 14 we give an application
of the index of projective families of $\SpinC$ Dirac operators, showing an
obstruction to the existence of large positive scalar curvature.

In \cite{BG}, the authors consider a fibre bundle $\pi: M\longrightarrow B$ of compact manifolds and a gerbe 
$L$ on $B$ such that $\pi^*(L)$ is a gerbe on $M$ having torsion Dixmier-Douady class. 
They define projective families of pseudodifferential operators 
and in particular, projective families of generalised Dirac operators. They use the results of \cite{MS} to see that the
Chern character of the analytic index lands in the twisted cohomology ${H^*}_L(B)$. Finally, they
generalise Bismut's heat kernel approach \cite{Bismut} to the index theorem for families of generalised Dirac operators,
to calculate the local index for projective families of generalised  Dirac operators, and obtain their main result, 
which is a special case of our Theorem \ref{IBG.191}.  In particular, their approach to the 
index theorem is interesting in its own right, but it does ignore torsion in K-theory, whereas our main
result is the index theorem in 
K-theory for projective families of elliptic operators, Theorem \ref{IBG.13}. We also give new constructions 
of the gerbes $L$ on $B$ in \S \ref{sect:Thurston}

%\medskip
%%and generalise Bismut's heat kernel approach to the index theorem for families of generalised Dirac operators, to this context by considering projective families of generalised Dirac operators. They use cyclic homology of projective smoothing operators, which by Mathai-Stevenson (2006) is isomorphic to twisted de Rham cohomology of B, H^*_L(B). Their index theorem is an equality (in twisted cohomology H^*_L(B)) of the Chern character of the analytic index and the topological index characteristic classes. 
%
%{\bf Acknowledgements} Varghese Mathai was supported by funding from
%the Australian Research Council, through the Australian Laureate Fellowship FL170100020.

\paperbody 

\section{Small bundle gerbes}

By a \emph{small gerbe} we will mean a smooth, connected, compact, finite-dimensional bundle
gerbe in the sense of Murray. More precisely the data is a triple
$(M,F,J)$ where $\phi:F\longrightarrow M$ is a fibration of compact
connected manifolds and $J$ is a simplicial line bundle over $F^{[2]}.$
Equivalently because of compactness, $\phi$ is a submersion. The simplicial
condition on the line bundle over the fibre product 
\begin{equation}
F^{[2]}=\{(p,p')\in F^2;\phi(p)=\phi(p')\}
\label{IBG.26}\end{equation}
is the existence of a smooth bundle isomorphism  
\begin{equation}
\sigma :\pi_{12}^*J\otimes\pi_{23}^*J\longrightarrow \pi_{13}^*J\text{ over }F^{[3]}
\label{IBG.27}\end{equation}
inducing the canonical trivialization over $F^{[4]}.$ This implies the
local triviality of the gerbe, meaning the existence over the preimages in
$F$ of some open cover of $M$ of line bundles $K$ such that $J$ is
isomorphic, with its simplicial product, to $\pi_1^*K\otimes\pi_2^*K^{-1}.$

\begin{rems} Given a good open cover $\{U_\alpha\}$ of $M$, one can always
  consider the disjoint union $\amalg\, U_\alpha$ and projection map
  $p:\amalg\, U_\alpha \rightarrow M$ which is a submersion.  This is known
  as the {\em \v Cech bundle gerbe}.  Although this is finite dimensional,
  the total space is highly disconnected, making it unsuitable for index
  theory applications.
\end{rems}

\begin{proposition}[\cite{Bigerbes}]\label{bg criterion} Let
  $\pi:F\longrightarrow M$ be a finite-dimensional compact fibre bundle
  then $F$ carries a small bundle gerbe with Dixmier-Douady 3-class $\delta
  \in H^3(M;\bbZ)$ if and only if $\pi ^*\delta =0,$ \end{proposition}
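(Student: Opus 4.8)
The plan is to derive both implications from one elementary fact: pulling back \emph{any} bundle gerbe along its own defining submersion yields a trivial gerbe, since that submersion then acquires the diagonal as a global section, and a bundle gerbe whose defining submersion admits a global section is trivial (a trivialising line bundle on the total space being read off from the gerbe's own line bundle by means of its simplicial isomorphism). For the ``only if'' direction I apply this to a small gerbe $(M,F,J)$, whose defining submersion is $\pi$: the pullback $\pi^*(M,F,J)$ over $F$ has defining submersion a projection $F^{[2]}\to F$, hence is trivial, so $\pi^*\delta=0$ by naturality of the Dixmier--Douady class.

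For the converse, assume $\pi^*\delta=0$. First I would fix \emph{any} bundle gerbe on $M$ with Dixmier--Douady class $\delta$, say one with surjective submersion $q\colon Y\to M$ and simplicial line bundle $L$ over $Y^{[2]}$ --- for concreteness the \v Cech bundle gerbe of a finite good cover, so that $Y=\coprod_\alpha U_\alpha$; the disconnectedness of $Y$ is harmless, as it only serves as scaffolding. Since $\pi^*\operatorname{DD}(Y,L)=\pi^*\delta=0$, the pullback gerbe $\pi^*(Y,L)$ over $F$, whose defining submersion is $Y\times_M F\to F$, is trivial and so admits a trivialisation: a line bundle $R$ over $Y\times_M F$ together with an isomorphism $\theta$, over $Y^{[2]}\times_M F$, between the pulled-back $L$ and the coboundary of $R$, compatible with the product on $L$; concretely a smooth family $\theta_f\colon L_{(y_1,y_2)}\xrightarrow{\sim}R_{(y_1,f)}\otimes R_{(y_2,f)}^{-1}$ compatible with the simplicial isomorphism $\sigma$ of $L$.

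Next I would build $J$ over $F^{[2]}$ by descent. On $Y\times_M F^{[2]}$ the line bundle $\widetilde J_{(y,f_1,f_2)}=R_{(y,f_2)}\otimes R_{(y,f_1)}^{-1}$ carries a tautological telescoping isomorphism over $Y\times_M F^{[3]}$, and, over $Y^{[2]}\times_M F^{[2]}$, a descent datum for the submersion $Y\times_M F^{[2]}\to F^{[2]}$, obtained by identifying, for $i=1,2$, the bundle $R_{(y,f_i)}\otimes R_{(y',f_i)}^{-1}$ with $L_{(y,y')}$ via $\theta_{f_i}$. Granting that this descent datum satisfies the cocycle condition and is compatible with the telescoping, $\widetilde J$ descends to a simplicial line bundle $J$ over $F^{[2]}$, so that $(M,F,J)$ is a small bundle gerbe. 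To identify its Dixmier--Douady class I would exhibit $R$ as a stable isomorphism $(Y,L)\simeq(F,J)$ of bundle gerbes over $M$: over $(Y\times_M F)^{[2]}_{/M}=Y^{[2]}\times_M F^{[2]}$ the descent gives $J_{(f_1,f_2)}\cong R_{(y_1,f_2)}\otimes R_{(y_1,f_1)}^{-1}$, and composing with $\theta_{f_2}$ telescopes $L_{(y_1,y_2)}\otimes J_{(f_1,f_2)}^{-1}$ to $R_{(y_1,f_1)}\otimes R_{(y_2,f_2)}^{-1}$, the coboundary of $R$; hence $\operatorname{DD}(F,J)=\operatorname{DD}(Y,L)=\delta$.

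The only genuine work, and the step I expect to be the main obstacle, is the coherence bookkeeping deferred above: showing that the single compatibility of $\theta$ with $\sigma$ forces the cocycle identity for the descent datum, the coherence of the resulting simplicial isomorphism of $J$ over $F^{[4]}$, and the compatibility of the stable isomorphism with the three bundle-gerbe products. These are standard but fiddly diagram chases over the iterated fibre products $F^{[k]}$ and $Y^{[k]}$, with no conceptual content beyond them. An essentially equivalent route represents $\delta$ by a principal $\PUH$-bundle $\mathcal P\to M$; since $\pi^*\delta=0$ the bundle $\pi^*\mathcal P$ lifts to a principal $\UH$-bundle $\widetilde{\mathcal P}\to F$, and then $J=\Hom(\pi_2^*\widetilde{\mathcal P},\pi_1^*\widetilde{\mathcal P})$ over $F^{[2]}$ --- where $\pi_1,\pi_2\colon F^{[2]}\to F$ are the projections --- with its composition product is the desired small gerbe.
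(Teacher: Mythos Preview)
The paper does not prove this proposition; it simply attributes it to \cite{Bigerbes} and states it without argument. Your proof plan is correct and supplies what the paper omits. The forward direction is the standard observation that pulling a bundle gerbe back along its own defining submersion yields a gerbe whose submersion admits the diagonal as a global section, hence is trivial. For the converse, your descent construction of $J$ from a trivialisation $R$ of the pullback of an auxiliary gerbe is the natural approach, and the stable-isomorphism argument correctly identifies the Dixmier--Douady class; the coherence verifications you defer are, as you say, routine diagram chases. Your alternative via a $\UH$-lift of $\pi^*\mathcal P$ for a $\PUH$-bundle $\mathcal P$ representing $\delta$ is equally valid and arguably tidier, since the simplicial structure on $J=\Hom(\pi_2^*\widetilde{\mathcal P},\pi_1^*\widetilde{\mathcal P})$ comes for free from composition.
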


As analysed in \cite{MMS2}, decomposable 3-classes give rise to small
bundle gerbes. More generally we note that

\begin{theorem}\label{thm:almost decomposable} If $H$ is a degree 3
  integral cohomology class on a manifold $N$ with a finite cover $\hat
  N\longrightarrow N,$ on which the lift of $H$ is decomposable, then $N$
  carries a small bundle gerbe with Dixmier-Douady invariant equal to $k
  H$, for any $k\in \ZZ$.  \end{theorem}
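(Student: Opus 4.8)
The plan is to reduce the statement about $N$ to the decomposability hypothesis on the finite cover $\hat N \longrightarrow N$ by a transfer argument, and then invoke Proposition~\ref{bg criterion} together with the construction of small gerbes from decomposable classes recalled from \cite{MMS2}. First I would let $\pi:\hat N \longrightarrow N$ be the given $m$-sheeted cover, so that $\pi^*H$ is decomposable, say $\pi^*H = a \cup b$ with $a\in H^1(\hat N;\bbZ)$ and $b\in H^2(\hat N;\bbZ)$ (or more generally a sum of such products, which causes no difficulty). The key elementary fact is that the composite $\pi_!\circ\pi^* : H^3(N;\bbZ)\longrightarrow H^3(N;\bbZ)$ is multiplication by $m$, where $\pi_!$ is the transfer (Umkehr) map for the finite cover. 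Hence $m H = \pi_!(\pi^*H) = \pi_!(a\cup b)$, and by the projection formula this is a class pulled back from $N$ in a controlled way.

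The construction then proceeds in two steps. Step one: since $\pi^*H$ is decomposable on $\hat N$, by \cite{MMS2} (as recalled in the excerpt) there is a small bundle gerbe $(\hat N, \hat F, \hat J)$ whose Dixmier--Douady class is $\pi^* H$; equivalently, by Proposition~\ref{bg criterion}, the pullback of $\pi^*H$ to $\hat F$ vanishes. Step two: I would take $F$ to be the fibre bundle over $N$ obtained by composing $\hat F \longrightarrow \hat N$ with the covering map, i.e. $F = \hat F$ viewed as a fibre bundle over $N$ via $\phi = \pi\circ\hat\phi$, which is again a smooth compact fibre bundle (the fibres being disjoint unions of $m$ copies of the fibres of $\hat\phi$, or one may first pass to the induced cover to keep fibres connected — this is a point to handle carefully). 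The pullback of $kH$ to $F$ is $k\,\hat\phi^*\pi^* H = k\cdot 0 = 0$, so by the ``if'' direction of Proposition~\ref{bg criterion}, $F\longrightarrow N$ carries a small bundle gerbe with Dixmier--Douady class $kH$ — \emph{provided} we can arrange $kH$ to pull back to zero on $F$ for the given integer $k$, which the above shows for all $k$ simultaneously since $\hat\phi^*\pi^*H=0$ already kills $H$ on $F$.

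The main obstacle, and the step requiring genuine care rather than formula-pushing, is the connectedness requirement built into the definition of a small gerbe: the naive pullback $F=\hat F \to N$ has disconnected fibres (they are $\amalg_m$ of the fibres of $\hat\phi$ when $\hat\phi$ has connected fibres over a connected cover), exactly the pathology flagged in the Remark about the \v Cech gerbe. I would resolve this either by replacing $\hat F$ with a connected fibre bundle over $N$ dominating it — for instance by forming an appropriate fibrewise join or product construction, or by embedding $\hat F$ into a connected fibre bundle over $N$ whose total-space pullback of $H$ still vanishes — and then using Proposition~\ref{bg criterion} once more. One also needs to check that $\pi^*H$ being decomposable is preserved well enough that the \cite{MMS2} machinery applies verbatim on the (possibly modified) cover; this is where I expect the bulk of the technical verification to lie, since everything cohomological ($\pi_!\pi^* = m$, the projection formula, $\hat\phi^*\pi^*H=0 \Rightarrow \hat\phi^*(kH)=0$) is routine. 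Finally, the independence of $k$ and the appearance of $kH$ rather than $mH$ comes for free: once $H$ pulls back to zero on a suitable connected fibre bundle $F\to N$, \emph{every} multiple $kH$ does too, so Proposition~\ref{bg criterion} yields a small gerbe with class $kH$ for each $k\in\ZZ$ on the \emph{same} $F$.
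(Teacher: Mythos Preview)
Your core argument is correct and matches the paper's (very brief) proof exactly: take the fibre bundle $\hat F \to \hat N$ from \cite{MMS2} on which $\pi^*H$ trivializes, view $\hat F$ as a compact fibre bundle over $N$ via composition with the finite cover, observe that $H$ (hence every $kH$) pulls back to zero on $\hat F$, and invoke Proposition~\ref{bg criterion}. The transfer/projection-formula discussion is unnecessary, and your connectedness worry is misplaced: the definition of a small gerbe requires the \emph{total space} to be connected, not the fibres, and $\hat F$ is already a connected manifold --- the \v{C}ech remark you cite warns against a disconnected total space, which is not the situation here.
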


\begin{proof} The decomposable case is treated in \cite{MMS2}, so there
  exists a smooth compact fibre bundle over $\hat{M}$ to which $H$ lifts to
  be trivial. This defines a smooth compact fibre bundle over $M$ with the
  same property, so Proposition~\ref{bg criterion} applies.
\end{proof}

\section{Thurston's list of geometric 3-manifolds}\label{sect:Thurston}

There are many examples of compact oriented aspherical $3$-manifolds. 
In Thurston's list of eight geometries, six of them are aspherical:

\begin{enumerate}
\item hyperbolic 3-manifolds i.e.\ quotients of hyperbolic space $\bbH^3,$ 
\item flat manifolds i.e.\ quotients of Euclidean space $\bbR^3$, 
\item nilmanifolds i.e.\ quotients 
of the Heisenberg group ${\rm Heis}$ which is the $\bbR$ central extension of $\bbR^2$,
\item solvmanifolds i.e.\ quotients
of  the solvable group ${\rm Solv}$  which is an $\bbR^2$ extension of $\bbR$, 
\item quotients
of $\bbH^2 \times \bbR$ 
\item quotients
of  $\widetilde{\operatorname{SL}(2, \bbR)}$ which is the universal cover
of $\operatorname{SL}(2, \bbR).$
\end{enumerate}

The discussion in Otal \cite[page 15, following problem 17]{Otal14}
asserts that

\begin{theorem} \label{thm:Otal}
Any compact oriented aspherical $3$-manifold $M$ has a finite cover $\hat M$
with positive first Betti number $b^1(\hat M)>0$.
\end{theorem}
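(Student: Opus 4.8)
The plan is to handle the six aspherical geometries from Thurston's list case by case, showing that in each case a compact oriented aspherical $3$-manifold $M$ has a finite cover with positive first Betti number. First I would dispose of the three "easy" geometries. For \textbf{flat manifolds} (Euclidean geometry) and \textbf{nilmanifolds} (Heisenberg geometry), $M$ is finitely covered by a torus $T^3$, respectively by a circle bundle over $T^2$ (a Heisenberg nilmanifold), and in both cases $b^1$ of that cover is already positive --- indeed $b^1(T^3)=3$ and the standard Heisenberg nilmanifold has $b^1=2$. For \textbf{solvmanifolds} (${\rm Solv}$ geometry), $M$ is finitely covered by a mapping torus of an Anosov diffeomorphism of $T^2$ with matrix $A\in SL(2,\bbZ)$ of trace $>2$; such a mapping torus fibres over $S^1$, so its $b^1$ is at least $1$ (it equals $1$ unless $1$ is an eigenvalue of $A$, which it is not here, but $b^1\ge 1$ suffices). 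Similarly for the $\bbH^2\times\bbR$ and $\widetilde{SL(2,\bbR)}$ geometries, $M$ is a Seifert fibred space over a hyperbolic $2$-orbifold; passing to a finite cover one gets an honest circle bundle over a closed oriented surface $\Sigma$ of genus $\ge 2$ (using that a hyperbolic orbifold is finitely covered by such a surface), and then $b^1\ge 2g\ge 4>0$ via the Gysin sequence of the circle bundle, regardless of the Euler number. This leaves only the \textbf{hyperbolic} case.

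The hyperbolic case is the main obstacle, and here I would invoke the deep input. The statement "every closed hyperbolic $3$-manifold has a finite cover with positive first Betti number" is precisely the content of the \emph{virtual Haken} / \emph{virtual positive Betti number} conjecture, now a theorem of Agol (building on Wise, Kahn--Markovic, Bergeron--Wise, etc.): in fact Agol's theorem gives that every closed hyperbolic $3$-manifold is virtually fibred, hence in particular virtually has $b^1>0$. So the plan for this case is simply to cite Agol's virtual fibering theorem (equivalently the virtual Haken theorem together with the fact that a Haken manifold with $b^1=0$ cannot exist --- a closed Haken hyperbolic manifold contains an embedded incompressible surface, and such a surface is either non-separating, giving $b^1>0$ directly, or one passes to a further finite cover as in the theory of virtual Betti numbers). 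Since the excerpt attributes this theorem to the discussion in Otal, I would present the hyperbolic case as following from that reference, noting that it rests on Agol's resolution of the virtual fibering conjecture.

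Having organized the six cases, the proof concludes by remarking that every compact oriented aspherical $3$-manifold is, by geometrization (Perelman) together with the torus decomposition, built from pieces each modelled on one of these six geometries --- and more carefully, since we only need a \emph{finite cover} with $b^1>0$, it suffices to find a single geometric piece in a finite cover with $b^1>0$ and then use that the restriction map $H^1(\hat M;\bbQ)\to H^1(\text{piece};\bbQ)$ combined with Mayer--Vietoris produces positive $b^1$ on $\hat M$ (or, more simply, one invokes that an aspherical graph manifold already virtually fibres over $S^1$, a classical fact, and a geometric hyperbolic piece handles the rest via Agol). The cleanest route, which I would take, is to cite directly the final assertion: by Agol's theorem every finite-volume hyperbolic $3$-manifold virtually fibres, by Luecke--Wu (or Wang--Yu) every aspherical graph manifold virtually fibres, and gluing along tori preserves the existence of a finite cover with $b^1>0$; this is exactly what Otal records.

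The subtle point to get right --- and the place where I expect to have to be careful rather than merely cite --- is the gluing step: given a finite cover of $M$ that is geometric or a graph manifold with $b^1>0$, one must ensure the resulting $b^1>0$ survives to a finite \emph{regular} cover of the original $M$, which requires a transfer/induction argument (averaging a nonzero class in $H^1(\hat M;\bbQ)$ over the deck group of a common regular cover, and checking the averaged class is still nonzero --- this is where one uses that $b^1$ is computed over $\bbQ$ and that the transfer map $H^1(\hat M;\bbQ)\to H^1(M;\bbQ)$ followed by restriction is multiplication by the degree). This is standard but worth stating explicitly; everything else is citation of the geometrization theorem plus the virtual fibering results.
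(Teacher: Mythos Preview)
The paper gives no proof: it simply records that the result appears in Otal's survey, with Otal's account resting on Agol together with Haglund--Wise. Your final proposal --- cite the assertion as recorded by Otal --- is exactly this, so at that level you and the paper agree.

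Your added value is the case analysis through the geometries, which is correct and self-contained for the five non-hyperbolic aspherical geometries, and correctly identifies Agol's virtual fibering theorem as the essential input for the closed hyperbolic case. Where your sketch develops a gap is the attempt to handle non-geometric manifolds (nontrivial JSJ decomposition) by assembly from pieces. The Mayer--Vietoris suggestion --- that positive $b^1$ on a piece forces positive $b^1$ on the whole --- is false in general: a class on a piece need not extend across the boundary tori. Your transfer argument is also misdirected: the issue is not promoting an irregular cover of $M$ to a regular one ($b^1$ can only grow under further finite covers, so that step is free), but rather whether a finite cover of a JSJ \emph{piece} extends to a finite cover of $M$ at all, and whether the extension retains $b^1>0$. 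That assembly problem is not soft; for manifolds whose JSJ pieces are all cusped hyperbolic it again requires Wise, and for mixed manifolds Przytycki--Wise. So the decomposition buys nothing over citing the theorem for $M$ directly, which is what both you (ultimately) and the paper do.
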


To prove this Otal uses a result of Agol \cite{Agol} as well as a result of
Haglund and Wise \cite{HW}.

\begin{proposition}\label{prop:asph}
Any compact oriented aspherical $3$-manifold $M$ has a small
bundle gerbe with Dixmier-Douady invariant equal to a multiple of the
volume class $k\operatorname{vol}(M)$, for any $k\in \ZZ$.
\end{proposition}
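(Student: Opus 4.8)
The plan is to reduce the statement to Theorem~\ref{thm:almost decomposable}, applied with $N=M$ and $H=\operatorname{vol}(M)$, the generator of $H^3(M;\bbZ)\cong\bbZ$ determined by the orientation. Since that theorem already packages the passage from a decomposable lift on a finite cover to a small gerbe on the base with Dixmier-Douady invariant $kH$ for every $k\in\bbZ$, it suffices to produce a finite cover of $M$ on which the lift of $\operatorname{vol}(M)$ is decomposable.

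First I would invoke Theorem~\ref{thm:Otal}: being a compact oriented aspherical $3$-manifold, $M$ admits a finite cover $p:\hat M\longrightarrow M$, say of degree $d$, with $b^1(\hat M)>0$. The total space $\hat M$ is again a closed oriented $3$-manifold (orient it compatibly with $M$), and since $H^3(\hat M;\bbZ)\cong\bbZ$ is generated by $\operatorname{vol}(\hat M)$, the class $p^*\operatorname{vol}(M)$ is necessarily an integer multiple of $\operatorname{vol}(\hat M)$ — in fact $p^*\operatorname{vol}(M)=d\,\operatorname{vol}(\hat M)$ by the projection formula, though only the fact that it is a multiple matters below.

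The key step is the observation that on a closed oriented $3$-manifold $\hat M$ with $b^1(\hat M)>0$ the class $\operatorname{vol}(\hat M)$, and hence every multiple of it, is decomposable. This is immediate from Poincaré duality: the cup-product pairing
\[
H^1(\hat M;\bbZ)/\mathrm{tors}\ \times\ H^2(\hat M;\bbZ)/\mathrm{tors}\ \longrightarrow\ H^3(\hat M;\bbZ)\cong\bbZ
\]
is unimodular, and $b^1(\hat M)>0$ means the left-hand factor is nonzero. Choosing a primitive class $\alpha\in H^1(\hat M;\bbZ)$ and, by unimodularity, a class $\beta\in H^2(\hat M;\bbZ)$ with $\alpha\cup\beta=\operatorname{vol}(\hat M)$, we obtain $p^*\operatorname{vol}(M)=\alpha\cup(d\beta)$, which is decomposable on $\hat M$.

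Finally, Theorem~\ref{thm:almost decomposable} with $H=\operatorname{vol}(M)$ applies, since the lift of $\operatorname{vol}(M)$ to the finite cover $\hat M$ has just been shown to be decomposable, and it yields a small bundle gerbe on $M$ with Dixmier-Douady invariant $k\operatorname{vol}(M)$ for every $k\in\bbZ$. The only thing to verify beyond the quoted results is the elementary Poincaré-duality remark of the previous paragraph, so I do not expect a genuine obstacle in this argument; the real difficulty is already absorbed into Theorem~\ref{thm:Otal}, which supplies a finite cover with positive first Betti number and itself rests on the work of Agol \cite{Agol} and of Haglund and Wise \cite{HW}.
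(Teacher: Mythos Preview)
Your proof is correct and follows essentially the same route as the paper: invoke Theorem~\ref{thm:Otal} for a finite cover with $b^1>0$, use Poincar\'e duality to exhibit decomposability on the cover, and then apply Theorem~\ref{thm:almost decomposable}. Your appeal to the unimodularity of the Poincar\'e pairing to obtain $\alpha\cup\beta=\operatorname{vol}(\hat M)$ on the nose is in fact a slight tightening of the paper's version, which only produces a nonzero cup product $a\cup b\in H^3(\hat M;\bbZ)$ and then takes a short extra detour through the remark that $k\operatorname{vol}(M)$ becomes torsion on the resulting iterated fibre bundle.
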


\begin{proof} Let $\hat M$ be a finite cover as in Theorem \ref{thm:Otal}.
  It follows that $\hat M$ has a decomposable volume form. Let $a\in
  H^1(\hat M;\bbZ)$ be a class with non-zero rational image.  Since $\hat
  M$ is compact and orientable, by Poincar\'e duality there is class $b\in
  H^2(\hat M;\bbZ)$ such that $a\cup b$ is non-vanishing in $H^3(\hat M;\bbZ)=\bbZ.$

Since $\vol(M)$ pulls back to an integral multiple of $\vol(\hat M),$ a
multiple of the volume form on $M$ pulls back to be decomposable on $\hat
M.$ Now Theorem~\ref{thm:almost decomposable} shows that this class on
$\hat M$ is represented by a small bundle gerbe. Thus $k\vol(M)$ pulls back
to be torsion on the iterated fibre bundle over $M.$ Thus there is a small
bundle gerbe with Dixmier-Douady class $k\vol(M).$
\end{proof}
 
Compact oriented hyperbolic 3-manifolds are examples of compact oriented
aspherical 3-manifolds. There are many examples of closed hyperbolic
3-manifolds with vanishing first Betti number.  For example, most Dehn
fillings on most knot complements will have this property.  More precisely,
if $K$ is any knot in $S^3$ which is not a torus knot or satellite knot,
then $M=S^3-K$ has a complete hyperbolic structure of finite volume by the
work of Thurston.  The $(p,q)$ Dehn filling on $M$ (i.e. attaching a solid
torus so $p$(meridian) $+q$(longitude) bounds a disk, $p,q$ being co-prime
integers) gives a closed manifold with $H_1 = \ZZ/p\ZZ$ and this will be
hyperbolic for all but a finite number of choices of $(p,q)$ by Thurston's
hyperbolic Dehn filling theorem.  In fact, for many choices of $K$ and
almost all choices of $n,(1,n)$ Dehn filling will give a hyperbolic
homology $3$-sphere.  Therefore it is necessary in these cases to consider
a finite cover that has positive first Betti number.

Compact flat oriented 3-manifolds are also examples of compact oriented
aspherical 3-manifolds. An example is the manifold that has zero first
Betti number, see \cite{Wolff}. It is necessary in this case also to
consider a finite cover that has positive first Betti number.

Note that $S^2\times S^1,$ the seventh example in Thurston's list of
$8$-geometries, gives an example of a $3$-manifold which is not aspherical,
but does have a small bundle gerbe, since the volume form on $S^2\times
S^1$ is decomposable.

The last example in Thurston's list of geometries is $S^3.$ Let $G$ be
a finite group acting freely on $S^3.$ Then

\begin{lemma}
$S^3/G$ does not have a nontrivial small bundle gerbe. 
\end{lemma}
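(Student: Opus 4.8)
The plan is to combine Proposition~\ref{bg criterion} with a spectral‑sequence computation. By that proposition a class $\delta\in H^3(M;\bbZ)$, $M=S^3/G$, is the Dixmier--Douady invariant of a small bundle gerbe if and only if there is a compact fibre bundle $\pi\colon F\to M$ with $\pi^*\delta=0$, so it suffices to prove that $\pi^*\colon H^3(M;\bbZ)\to H^3(F;\bbZ)$ is injective for every such $F$. Since the free action of the finite group $G$ on $S^3$ is orientation preserving (an orientation‑reversing self‑homeomorphism of $S^3$ has Lefschetz number $2\neq0$, hence a fixed point), $M$ is a closed oriented $3$‑manifold, $H^3(M;\bbZ)\cong\bbZ$ with generator the fundamental class $u_M$, and injectivity of $\pi^*$ is equivalent to $\pi^*u_M\neq0$ in $H^3(F;\bbQ)$.

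First I would pull everything back along the universal covering $\rho\colon S^3\to M$, a $|G|$‑fold cover, so that $\rho^*u_M=|G|\,u_{S^3}\neq0$. The pulled‑back bundle $\widetilde\pi\colon\widetilde F=F\times_M S^3\to S^3$ has the same fibre $Z$, and the second projection $\widetilde\rho\colon\widetilde F\to F$ satisfies $\widetilde\rho^*\pi^*=\widetilde\pi^*\rho^*$, so it is enough to show $\widetilde\pi^*$ is injective on $H^3$. Because $S^3$ is simply connected, the (possibly disconnected) fibre $Z$ causes no trouble: the total space decomposes as a disjoint union of fibre bundles over $S^3$ with connected fibres (the components of $Z$), so I may pass to one component $\widetilde\pi_0\colon\widetilde F_0\to S^3$ with connected compact fibre $Z_0$ and reduce to showing $\widetilde\pi_0^*u_{S^3}\neq0$ in $H^3(\widetilde F_0;\bbQ)$.

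Now I would run the rational Serre spectral sequence of $Z_0\to\widetilde F_0\to S^3$. The base has trivial $\pi_1$ and rational cohomology concentrated in degrees $0$ and $3$, so $E_2^{p,q}=H^p(S^3;\bbQ)\otimes H^q(Z_0;\bbQ)$ vanishes unless $p\in\{0,3\}$, and the only possibly nonzero differential is $d_3\colon E_3^{0,q}\to E_3^{3,q-2}$. The edge homomorphism identifies $\widetilde\pi_0^*$ on $H^3$ with the quotient $H^3(S^3;\bbQ)=E_2^{3,0}\twoheadrightarrow E_\infty^{3,0}$, so the claim is equivalent to $d_3\colon E_3^{0,2}=H^2(Z_0;\bbQ)\to E_3^{3,0}\cong\bbQ$ being zero. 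Writing $H^*(S^3;\bbQ)=\Lambda(e)$ with $|e|=3$ one has $E_3=\Lambda(e)\otimes H^*(Z_0;\bbQ)$ and $d_3(e)=0$ for degree reasons, so multiplicativity yields $d_3(1\otimes x)=e\otimes\theta(x)$ for a degree‑$(-2)$ derivation $\theta$ of the ring $H^*(Z_0;\bbQ)$. The only real content is that $\theta$ annihilates $H^2$: given $x\in H^2(Z_0;\bbQ)$, finiteness of $\dim H^*(Z_0;\bbQ)$ forces $x^N=0$ for some least $N$; if $N\ge2$ then $x^{N-1}\neq0$ while $0=\theta(x^N)=N\,x^{N-1}\,\theta(x)$ by Leibniz, and since $\theta(x)\in H^0(Z_0;\bbQ)=\bbQ$ is a scalar this gives $\theta(x)=0$. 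Hence $d_3$ vanishes on $E_3^{0,2}$, $\widetilde\pi_0^*$ is injective on $H^3$, and unwinding the reductions shows $\pi^*$ is injective on $H^3(M;\bbZ)$, so $\delta=0$.

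I expect the main obstacle to be organizational rather than conceptual: one must carry out the reduction to a connected fibre over the simply‑connected base carefully enough that all local coefficient systems disappear and the spectral sequence genuinely collapses to the two columns above, and one must verify that the relevant $d_3$ really is the Leibniz derivation $\theta$ (that is, that $E_3$ is the honest tensor product of cohomology rings with $d_3(e)=0$); after that the nilpotence argument killing $\theta|_{H^2}$ is immediate. One can avoid passing to the universal cover by noting instead that $M$ is a rational homology $3$‑sphere with $H^1(M;\mathcal L)=H^2(M;\mathcal L)=0$ for every rational local system $\mathcal L$, which lets the same computation be run directly over $M$; and when $\chi(Z)\neq0$ the Becker--Gottlieb transfer gives the injectivity of $\pi^*$ at once, but the argument above covers all fibres uniformly.
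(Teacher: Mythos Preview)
Your argument is correct, but it takes a substantially different route from the paper's. The paper simply pulls the putative small gerbe back along the finite cover $p:S^3\to S^3/G$, invokes the Murray--Stevenson theorem \cite{MS3} (on a simply connected base a small bundle gerbe can only represent a torsion Dixmier--Douady class, hence the zero class on $S^3$), and then uses injectivity of $p^*$ on $H^3$ for a finite cover to conclude. Your approach instead uses Proposition~\ref{bg criterion} to reduce to showing that $\pi^*$ is injective on $H^3$ for \emph{every} compact fibre bundle $\pi:F\to S^3/G$, and then establishes this by a direct rational Serre spectral-sequence computation over $S^3$; the derivation/nilpotence trick kills the only possibly nonzero differential $d_3:H^2(Z_0;\bbQ)\to\bbQ$. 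In effect you are reproving, in the special case of $S^3$, the relevant instance of the Murray--Stevenson obstruction that the paper simply quotes. The payoff is a self-contained argument that avoids the external citation and exhibits explicitly \emph{why} no fibre can absorb the generator of $H^3(S^3;\bbQ)$; the cost is length, since the paper's proof is two lines once \cite{MS3} is available.
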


\begin{proof} Suppose $(\cL, X)$ is a small bundle gerbe over $S^3/G$ with
  $p:S^3 \to S^3/G$ the projection.  Then $p^*(\cL, X)$ is a small bundle
  gerbe over $S^3$. But the main theorem in \cite{MS3} implies that there
  are no finite dimensional bundle gerbes over $S^3$, which is a
  contradiction. The Dixmier-Douady class of the gerbe therefore pulls back
  to be trivial on $\bbS^3$ but since this is a finite cover this class
  vanishes on the quotient.
\end{proof}

\section{Connected sums}

Let $M_i,$ $i=1,2$ be two smooth connected real $n$-dimensional manifolds.
Let $\iota:D_1\hookrightarrow M_1$ and $j:D_2\hookrightarrow M_2$ be embedded
balls. Consider their connected sum given by
$$
M_1\#M_2 = (M_1\setminus
\mathring{D}_1) \bigcup_\phi (M_2\setminus \mathring{D}_2)
$$
where $\phi: \partial D_1 \to \partial D_2$ is an orientation reversing
diffeomorphism of the $(n-1)$-sphere. Let $(\cL_i, X_i)$ be small bundle
gerbes over the $M_i.$  {Over} the $D_i$ the gerbes can be trivialized. Then
each of the bundles and gerbes can be extended as a small bundle gerbe to
the connected sum $M_1\#M_2 $, as the given bundle gerbe on $M_i\setminus
D_i$ and the trivial bundle gerbe on the other manifold. Denote these
extended small bundle gerbes as $(\widetilde \cL_i, \widetilde X_i)$.
Finally, consider the tensor product $(\widetilde \cL_1, \widetilde X_1)
\otimes (\widetilde\cL_2, \widetilde X_2):= (\cL_1, X_1) \# (\cL_2, X_2)$,
which is a small bundle gerbe over the connected sum $M_1\#M_2 $.  By induction, this
proves

\begin{lemma}\label{lem:connected-sum}
Let $M_i,$ $i=1,2,\ldots, m$ be  smooth connected $n$-dimensional manifolds having 
small bundle gerbes $(\cL_i, X_i)$ on $M_i$, $i=1, \ldots, m$. Then the connected sum $M=M_1\#M_2\#\ldots\# M_m$ has a small
bundle gerbe $(\cL_1, X_1) \# (\cL_2, X_2)\#\ldots \#(\cL_m, X_m)$ as above.
\end{lemma}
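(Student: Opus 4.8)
The construction has essentially been spelled out in the paragraph preceding the statement, so the plan is simply to check that each step there is well defined. I would organise this into three stages, writing $Z_i$ for the (compact, connected) typical fibre of $X_i\to M_i$ and shrinking the embedded balls $D_i$ if necessary so that each lies in an open set of $M_i$ over which $(\cL_i,X_i)$ is locally trivial in the sense recalled in Section~1.

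\emph{Stage 1: gluing the fibre bundles.} Since $D_i$ is a ball, the bundle $X_i|_{D_i}\to D_i$ is trivial, so $X_i|_{\partial D_i}\cong\partial D_i\times Z_i$. Using this product structure on $\partial D_1$ and $\partial D_2$ together with the orientation-reversing attaching diffeomorphism $\phi$, I would glue $X_1|_{M_1\setminus\mathring{D}_1}$ to $(M_2\setminus\mathring{D}_2)\times Z_1$ to obtain a smooth compact connected fibre bundle $\widetilde X_1\to M_1\#M_2$ with fibre $Z_1$, and symmetrically $\widetilde X_2\to M_1\#M_2$ with fibre $Z_2$.

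\emph{Stage 2: gluing the simplicial line bundles.} By local triviality there are a line bundle $K_1\to X_1|_{D_1}\cong D_1\times Z_1$ and, over the part of $X_1^{[2]}$ lying above $D_1$, an isomorphism $\cL_1\cong\pi_1^*K_1\otimes\pi_2^*K_1^{-1}$ compatible with the simplicial products. The one subtlety is that this trivialising bundle must be available over the non-contractible piece $(M_2\setminus\mathring{D}_2)\times Z_1$, not only over the boundary sphere; this works because $D_1$ is contractible, so $K_1$ is the pull-back of a line bundle $L_1\to Z_1$, and $L_1$ pulls back along the projection $(M_2\setminus\mathring{D}_2)\times Z_1\to Z_1$ to a line bundle $P_1$. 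I would then put on $(M_2\setminus\mathring{D}_2)\times Z_1$ the simplicial line bundle $\pi_1^*P_1\otimes\pi_2^*P_1^{-1}$ with its canonical simplicial structure (a trivial bundle gerbe), keep $\cL_1$ over $M_1\setminus\mathring{D}_1$, and glue the two over the part $\partial D_1\times Z_1^2$ of $\widetilde X_1^{[2]}$ lying over the neck, using the trivialising isomorphism above. Since that isomorphism intertwines the simplicial products and both sides carry the canonical structure near the neck, the pieces glue to a simplicial line bundle $\widetilde\cL_1$ over $\widetilde X_1^{[2]}$, so $(\widetilde\cL_1,\widetilde X_1)$ is a small bundle gerbe over $M_1\#M_2$; symmetrically one gets $(\widetilde\cL_2,\widetilde X_2)$.

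\emph{Stage 3: the tensor product.} Finally I would take the fibre product $\widetilde X_1\times_{M_1\#M_2}\widetilde X_2\to M_1\#M_2$, again a compact connected fibre bundle, and the tensor product of the pull-backs of $\widetilde\cL_1$ and $\widetilde\cL_2$ to its fibre product, with the induced simplicial structure; this is the small bundle gerbe $(\cL_1,X_1)\#(\cL_2,X_2)$, whose Dixmier-Douady class restricts to that of $\cL_i$ on $M_i\setminus\mathring{D}_i$ for $i=1,2$. The only genuine obstacle is the bookkeeping in Stage 2 — arranging that the trivialisation over the ball is pulled back from the fibre so that it extends over the complement, and checking that every gluing isomorphism respects the simplicial products — since Stages 1 and 3 are just the standard facts that fibre bundles and bundle gerbes over a ball are trivial and that bundle gerbes over a common base can be multiplied.
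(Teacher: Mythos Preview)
Your proposal is correct and follows essentially the same three-step construction the paper sketches in the paragraph preceding the lemma (trivialize over the balls, extend each gerbe across the other summand by a trivial gerbe, then take the tensor product). You are in fact more careful than the paper on one point: where the paper simply says ``the trivial bundle gerbe on the other manifold,'' you make explicit that the trivialising line bundle $K_1$ over $D_1\times Z_1$ can be taken pulled back from $Z_1$, which is exactly what is needed to extend the simplicially-trivial structure across $(M_2\setminus\mathring{D}_2)\times Z_1$ and glue compatibly.
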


Then by Proposition \ref{prop:asph} and Lemma \ref{lem:connected-sum} we deduce our main result for 3-manifolds.

\begin{theorem}\label{thm:small gerbes on 3-manifolds}
Let $M_i,$ $i=1,2,\ldots, m$ be  smooth connected aspherical $3$-dimensional manifolds having 
small bundle gerbes $(\cL_i, X_i)$ on $M_i$, $i=1, \ldots, m$. Then the connected sum $M=M_1\#M_2\#\ldots\# M_m$ has a small
bundle gerbe $(\cL_1, X_1) \# (\cL_2, X_2)\#\ldots \#(\cL_m, X_m)$ as above.
\end{theorem}

We propose the following problem.\\

{\em Given a closed 3-manifold $M = M_1 \bigcup_{\TT^2} M_2$ which is a union of $M_1, M_2$ over an incompressible torus
$\TT^2$, 
and  small bundle gerbe $X_1 \to M_1$ with Dixmier-Douady invariant $H_1$, is there an extension to a 
small bundle gerbe $X\to M$?}

\section{Small Azumaya bundles}\label{S.SAB}

As noted in the Introduction, if $F$ is the total space of a small gerbe then
\begin{equation}
\Psi^{-\infty}_J=\CI(F^{[2]};J\otimes \Omega ),
\label{IBG.28}\end{equation}
is a bundle of algebras over the base $Y$ of the fibration. A local
trivialization of $F$ as a gerbe over $Y$ induces a local trivialization of
$\Psi^{-\infty}_J.$ Namely over the preimage $(\phi^{[2]})^{-1}(U_a)\subset F^{[2]}$
of the open cover $U_a\subset Y$ there is a smooth simplicial isomorphism $J\equiv
S_a\otimes S_a^{-1}$ for a smooth line bundle $S_a$ over
$F_a=\phi^{-1}(U_a)\subset F.$ This identifies $\Psi^{-\infty}_J(U_a)$ with the bundle of
smoothing algebras on the fibres $\Psi^{-\infty}_{\phi}(F_a;S_a).$ Over the
intersections the line bundle $S_b=S_a\otimes S_{ab},$ where $S_{ab}$ is a line bundle
over $U_a\cap U_b,$ induces a transition map by pull-back. Even though these
maps are not norm continuous on the Hilbert space completion to the fibre
$L^2$ spaces the algebras of compact operators patch to an Azumaya bundle
completing the smoothing algebra 
\begin{equation}
\Psi^{-\infty}_J\subset\cA_J.
\label{IBG.36}\end{equation}
\begin{proposition}\label{IBG.29} As a bundle of algebras over the base
of a small gerbe, $\cA_J$ is a smooth Azumaya bundle with Dixmier-Douady
class the 3-class represented by the gerbe.
\end{proposition}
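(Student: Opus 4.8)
The plan is to verify that the patched bundle of algebras $\cA_J$ satisfies the defining properties of a smooth Azumaya bundle — local triviality with compact-operator fibres and matching transition data — and then identify its Dixmier-Douady class with the gerbe's 3-class by comparing \v Cech representatives.

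First I would set up the local picture. Using the local triviality of the gerbe established in Section~\ref{S.SAB} (the isomorphisms $J\equiv S_a\otimes S_a^{-1}$ over $(\phi^{[2]})^{-1}(U_a)$), I would record that on each $U_a$ the bundle $\Psi^{-\infty}_J$ is identified with the fibrewise smoothing algebra $\Psi^{-\infty}_\phi(F_a;S_a)$, whose fibrewise $L^2$-completion is the bundle of compact operators on the Hilbert space $H_a=L^2(\phi^{-1}(\cdot);S_a)$ with its fibre density. Thus $\cA_J|_{U_a}$ is locally the compact operators on a (locally trivial, separable, infinite-dimensional) Hilbert bundle, which is exactly the local model for an Azumaya bundle; the ``smooth'' qualifier is the statement that the structure is that of a bundle of $\Psi^{-\infty}$ algebras, i.e.\ the kernels are jointly smooth on $F^{[2]}$ — this is immediate from \eqref{IBG.28}. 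The main point to check here is that the completion in \eqref{IBG.36} is well-defined independently of the local trivialization, which reduces to the observation that conjugation by the transition line bundle $S_{ab}$ (a unitary on fibrewise $L^2$) carries compact operators to compact operators, even though, as noted in the text, it is not norm-continuous in the base variable.

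Next I would extract the transition cocycle. On $U_a\cap U_b$ the two trivializations differ by $S_b=S_a\otimes S_{ab}$, so the induced isomorphism of algebras $\cA_J|_{U_a}\to\cA_J|_{U_b}$ over the overlap is conjugation by a unitary line bundle $S_{ab}$, i.e.\ it lies in $\PU(H)$; the failure of the $S_{ab}$ to compose on triple overlaps is measured by a line bundle (equivalently a $\UU(1)$-valued \v Cech $2$-cocycle) $\lambda_{abc}$ on $U_a\cap U_b\cap U_c$, and this is precisely the obstruction cocycle of the Azumaya bundle, hence represents its Dixmier-Douady class. On the other hand, the same line bundles $S_a$ and transition bundles $S_{ab}$ are by construction the local data presenting the bundle gerbe $(F,J)$, so the $2$-cocycle $\lambda_{abc}$ they produce is also a \v Cech representative of the gerbe's Dixmier-Douady $3$-class. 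Comparing the two constructions term by term gives the equality of classes. I would also note that different choices of good cover and of the $S_a$ change $\lambda_{abc}$ by a coboundary, so the class is well-defined — this is the standard naturality of the Dixmier-Douady class and needs only a sentence.

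I expect the main obstacle to be purely the bookkeeping of the third, matching step: making precise that the cocycle computed from the Azumaya transition functions is \emph{the same} cocycle (not merely cohomologous by some non-canonical identification) as the one built into the definition of the gerbe's transgressed class. This is essentially a diagram-chase through the definition of transgression from simplicial line bundles on $F^{[2]}$ to $H^3(M;\bbZ)$, and once the local trivializing data $\{S_a,S_{ab}\}$ is fixed it is forced; the only subtlety is tracking the simplicial product on $J$ correctly so that signs and inverses of the $S_{ab}$ land in the right places. Everything analytic — smoothness of kernels, compactness of the completions, unitarity of the conjugations on fibrewise $L^2$ — is routine and can be dispatched quickly, so the proof is really a comparison of two \v Cech descriptions of one class.
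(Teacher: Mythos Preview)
Your proposal is correct and follows essentially the same approach as the paper: identify the Dixmier-Douady class of $\cA_J$ via the transition line bundles $S_{ab}$ arising from the local simplicial trivializations $J\simeq S_a\otimes S_a^{-1}$, and observe that this is precisely the \v Cech data presenting the gerbe's $3$-class. The paper's own proof is a single sentence to this effect (the local setup having already been recorded in the paragraph preceding the proposition), whereas you spell out the analytic verifications and the bookkeeping more fully; but the underlying argument is the same.
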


\begin{proof} The Dixmier-Douady class of $\cA_J$ is the class of the gerbe, since
 in terms of a trivialization it is given by the sections of the $S_{ab}.$
\end{proof}

It follows that if $\Gamma \longrightarrow M$ is any bundle of classifying groups
for odd K-theory, for instance the groups of invertibles of the form
$\Id+A$ with $A\in\Psi^{-\infty}_\psi(X,V)$ for any other smooth
finite-dimensional fibre bundle $\psi:X\longrightarrow M$ (and vector bundle $V$
over it), then the bundle of groups
\begin{equation}
G^{\infty}_J=\{\Id+A;\text{invertible}\},\ A\in\CI(F^{[2]}\times_Y\Gamma
^{[2]};J\otimes\hom(V))
\label{IBG.30}\end{equation}
is a classifying bundle for twisted odd K-theory of $Y$
\begin{equation}
K^1(Y;\cA_J)=[Y;G^\infty_J].
\label{IBG.31}\end{equation}

Since the sections of the fibre operators on $\Gamma$ can be approximated
by sections valued in a finite-dimensional subbundle, \eqref{IBG.31} is a
corollary of
 
\begin{lemma}\label{22.4.2022.2} If $\psi: \Gamma \longrightarrow Y$ is any fibre
  bundle and $\cA_J$ is a small Azumaya bundle over $Y$ then the the odd
  K-theory of $Y,$ twisted by $\cA_J,$ $K^1(Y;\cA_J)$ is
  represented by equivalence classes of sections defined in terms of the
  Morita-stabilized, pulled-back bundles
\begin{equation}
\alpha\in\CIc(\Gamma;\psi^*\Psi^{-\infty}_J\otimes M(N)))
\label{22.4.2022.3}\end{equation}
such that $\Id+\alpha$ is invertible, with equivalence up to stabilization
and homotopy. 
\end{lemma}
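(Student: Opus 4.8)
The plan is to reduce the statement to the already-established classification of twisted odd K-theory by classifying spaces of invertibles, and then promote that to the ``pulled-back, Morita-stabilized'' description over an arbitrary auxiliary fibre bundle $\psi:\Gamma\longrightarrow Y$. First I would recall that, by Proposition~\ref{IBG.29} and the discussion around \eqref{IBG.30}--\eqref{IBG.31}, twisted odd K-theory $K^1(Y;\cA_J)$ is represented by homotopy classes of sections of a bundle of invertibles $\Id+A$ with $A$ a section of $J\otimes\hom(V)$ over $F^{[2]}\times_Y\Gamma_0^{[2]}$ for \emph{some} fixed fibre bundle $\Gamma_0\longrightarrow Y$ and vector bundle $V$; equivalently, $\cA_J$ completes to an Azumaya bundle modelled on the compacts, and its stable unitary group is a classifying bundle for $K^1(Y;\cA_J)$. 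The content of the Lemma is that the \emph{same} invariant is computed using sections supported on the total space of \emph{our} chosen $\Gamma$, taking values in $\psi^*\Psi^{-\infty}_J\otimes M(N)$, with $\Id+\alpha$ invertible, modulo stabilization in $N$ and homotopy.

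The key steps, in order. \emph{(1) A section over $\Gamma$ gives a section over $Y$.} Given $\alpha\in\CIc(\Gamma;\psi^*\Psi^{-\infty}_J\otimes M(N))$ with $\Id+\alpha$ invertible, push forward along $\psi$: the fibrewise integral in the fibres of $\Gamma$ (together with the $M(N)$-factor) absorbs the $\Gamma$-variables into the smoothing/matrix structure, producing a section of $\Psi^{-\infty}_J\otimes(\text{smoothing operators on the fibres of }\Gamma)\otimes M(N)$ over $Y$ of the form $\Id+A$ with $A$ invertible in this larger algebra. This is exactly the form \eqref{IBG.30} for the auxiliary bundle $\Gamma_0=\Gamma$ and an infinite-rank $V$ (which, by the approximation remark just before the Lemma, is equivalent to a finite-rank one after stabilization). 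Hence we get a well-defined map from the proposed equivalence classes to $K^1(Y;\cA_J)$, independent of $N$ and of homotopy. \emph{(2) Surjectivity.} Conversely any class in $K^1(Y;\cA_J)$ is represented, via \eqref{IBG.30}--\eqref{IBG.31}, by an invertible $\Id+A$ with $A\in\CI(F^{[2]}\times_Y\Gamma_0^{[2]};J\otimes\hom V)$ for some auxiliary $\Gamma_0$ and $V$. Using the smooth local triviality of $\cA_J$ (the $S_a$ trivializations) and a partition of unity on $Y$, write $A$ locally as an honest fibrewise smoothing operator twisted by the line bundles $S_a$; replace $\Gamma_0$ by $\Gamma$ by a standard stabilization/Morita move (embed the relevant finite-rank bundle into $\CIc$-sections over $\Gamma$ tensored with $M(N)$ for $N$ large, using that $\Gamma\longrightarrow Y$ is a surjective submersion of compact manifolds so its fibrewise smoothing algebra is Morita-trivial over $Y$), landing in the form \eqref{22.4.2022.3}. \emph{(3) Injectivity.} If two such $\alpha_0,\alpha_1$ have the same image in $K^1(Y;\cA_J)$, a homotopy between their images in the classifying bundle can be pulled back and, after a further stabilization in $N$, realized as a homotopy through invertibles of the form \eqref{22.4.2022.3}; the usual Eilenberg swindle / rotation trick handles the compatibility of the two stabilizations.

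The main obstacle I expect is step (2), and specifically making the ``replace $\Gamma_0$ by $\Gamma$'' move precise while keeping everything smooth and compactly supported. The point is Morita invariance: for any surjective submersion $\psi:\Gamma\longrightarrow Y$ of compact manifolds, the bundle of smoothing algebras $\Psi^{-\infty}_\psi(\Gamma)$ is Morita-equivalent over $Y$ to the trivial bundle $M(N)$-algebra in the limit, so tensoring $\cA_J$ with it does not change the twisted K-theory; but one must exhibit the equivalence at the level of \emph{smooth} sections with the correct support conditions, not merely in $K$-theory, since the Lemma asserts an explicit cycle description. Concretely, this requires: (a) choosing a smooth embedding of the finite-rank bundle $V$ (carrying the representative from \eqref{IBG.30}) into a sub-bundle of $\psi_!(\text{trivial bundle over }\Gamma)\otimes\bbC^N$ for large $N$; (b) transporting the invertible $\Id+A$ along this embedding and checking it remains invertible and of the stated form after the pushforward-versus-pullback bookkeeping in \eqref{22.4.2022.3}; and (c) verifying that different choices are connected by a homotopy of the same type. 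Steps (1) and (3) are then essentially formal once the algebra identifications are in place, and the whole argument is a mild elaboration of the standard proof that the stable unitary group of an Azumaya bundle, Morita-stabilized, classifies twisted $K^1$.
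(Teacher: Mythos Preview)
Your proposal takes a much more elaborate route than the paper, and step~(1) has a genuine gap. You propose to ``push forward along $\psi$'' by a fibrewise integral so that the $\Gamma$-variables are ``absorbed into the smoothing/matrix structure,'' yielding a section over $Y$ valued in $\Psi^{-\infty}_J\otimes\Psi^{-\infty}_\psi(\Gamma)\otimes M(N)$. But a compactly supported section $\alpha$ over $\Gamma$ is, fibrewise over $y\in Y$, a \emph{function} on $\Gamma_y$ with values in $(\Psi^{-\infty}_J)_y\otimes M(N)$; viewed as an operator on $L^2(\Gamma_y)$ it is a multiplication operator, not a smoothing operator, and no fibrewise integration converts it into one while preserving the invertibility of $\Id+\alpha$. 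So the map you describe from your cycles to sections of the classifying bundle over $Y$ is not well defined as stated. The subsequent Morita/Eilenberg-swindle machinery in steps~(2)--(3) is then built on this undefined map.

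The paper's proof bypasses all of this: it is a one-line density argument. Since $\Psi^{-\infty}_J\subset\cA_J$ is norm-dense (by construction of the completion in \eqref{IBG.36}), any section of $\psi^*\cA_J\otimes M(N)$ over $\Gamma$ with $\Id+\alpha$ invertible can be norm-approximated by one with $\alpha$ in $\psi^*\Psi^{-\infty}_J\otimes M(N)$, and invertibility is an open condition. Thus the description \eqref{22.4.2022.3} is literally the standard definition of odd twisted K-theory with the completed Azumaya bundle replaced by its dense smooth subalgebra; no pushforward, no change of auxiliary bundle, and no Morita move is needed. You have missed this simple mechanism and tried to prove a harder-looking statement than is actually being asserted.
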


\begin{proof} By construction of the completion $\cA_J$ of the small
  Azumaya bundle, sections of $\cA_J$ can be norm-approximated by sections
  of $\Psi^{-\infty}_J$ and hence \eqref{22.4.2022.3} reduces to the
  definition of odd twisted K-theory.
\end{proof}

There is a similar realization of even twisted K-theory in which $\alpha$
in \eqref{22.4.2022.3} is replaced by 
\begin{equation}
\beta \in\CIc(\Gamma;\psi^*\Psi^{-\infty}_J\otimes \left(M(N))\oplus (M(N)\right))\Mst
(P_N+\beta )^2=P_N+\beta
\label{IBG.96}\end{equation}
where $P_N$ is the constant projection onto the first factor
$M(N).$ Then again $K^0(Y;\cA_J)$ corresponds to equivalence classes under
homotopy and stabilization.

\section{Semiclassical Atiyah-Singer map}

For the orientation of the reader we briefly recall the semiclassical
version of the Atiyah-Singer construction from \cite{MMS2}; it is a model
for the proof below.

For a smooth fibre bundle, $\phi:F\longrightarrow Y,$ with compact fibres
over a smooth manifold, $Y,$ the index map defined in terms of
pseudodifferential operators may instead be defined in terms of
semiclassical pseudodifferential operators, and even more conveniently in
terms of semiclassical smoothing operators.

Recall that the bundle of smoothing operators on the fibres of $F$ may be
defined directly in terms of the kernels on the fibre product 
\begin{equation}
\begin{gathered}
\Psi^{-\infty}_{\phi}(F;V)=\CI(F^{[2]};\pi_L^*V\otimes\pi_R^*V'\otimes\pi_R^*\Omega _{\fib})\\
\xymatrix{
&F^{[2]}\ar[dl]^{\pi_L}\ar[dr]_{\pi_R}\\
F&&F.}
\end{gathered}
\label{IBG.128}\end{equation}
Here $V$ is a complex vector bundle over $F$ and $\Omega _{\fib}$ is the
bundle of smooth densities on the fibres. Acting on smooth sections of
$V$ by fibre integration 
\begin{equation}
\begin{gathered}
\Psi^{-\infty}_{\phi}(F;V)\times \CI(F;V)\longrightarrow \CI(F;V)\\
(A,v)\longmapsto Av=(\pi_L)_*(A\pi_R^*v)
\end{gathered}
\label{IBG.129}\end{equation}
these form an algebra.

We are particularly interested in the group of invertible perturbations of
the identity 
\begin{equation}
\cG^{-\infty}_{\phi}(F;V)=\{A\in\Psi^{-\infty}_{\phi}(F;V);\Id+A\text{ is invertible}\}.
\label{IBG.160}\end{equation}

Rather than the symbolic representation of (even) K-theory on the
fibre-wise cotangent bundle $T^*_{\phi}F,$ which arises in the usual
Atiyah-Singer map, the relevant representations here are the more obvious ones in
terms of
\begin{equation}%
\begin{gathered}
\dcG(\overline{T^*_{\phi}F};M(N))=\{\Id+a\text{
  invertible};a\in\dCI(\overline{T^*_{\phi}F};M(N))\}\\
K^1(T^*_{\phi}F)=\cup_N \dcG(\overline{T^*_{\phi}F};M(N))/\simeq\Mand\\
\dcP(\overline{T^*_{\phi}F};M(2N))=\{P=P_N+b;b\in\dCI(\overline{T^*_{\phi}F};M(2N));P^2=P\}\\
K^0(T^*_{\phi}F)=\cup_N\dcP(\overline{T^*_{\phi}F};M(2N))/\simeq.
\end{gathered}
\label{IBG.130}\end{equation}
Here the Schwartz functions on the fibres of $T^*_{\phi}F$ are identified
as those smooth functions on the radial compactification which vanish to
infinite order at the boundary. In the even case, $P_N$ is the projection
onto the first $N\times N$ matrix subalgebra. In both cases the equivalence
relation is the combination of smooth homotopy and stability under direct sum.

Quantization of the representatives in \eqref{IBG.130} is through the bundle
of semiclassical smoothing algebras on the fibres of $F.$ As in
\eqref{IBG.128} these can be defined directly in terms of smooth kernels
but now on the space
\begin{equation}
F[2,\scl]=[F^{[2]}\times[0,1]_\epsilon;\Diag\times\{0\}]\overset{\beta
}\longrightarrow F^{[2]}\times[0,1]
\label{IBG.131}\end{equation}
where the diagonal has been blown up at parameter value $0.$ This space has three
boundary hypersurfaces, the unchanged boundary $H_1=\{\epsilon =1\},$ the `front
face', $\ff,$ produced by the blow-up and the lift, or proper transform
$H_0=\beta ^\#\{\epsilon =0\}.$ The latter two together constitute the preimage
of $\{\epsilon =0\}$ under the blow-down map $\beta.$ There is a natural
diffeomorphism
\begin{equation}
\ff(F[2,\scl])\longrightarrow \overline{T_{\phi}F}
\label{IBG.133}\end{equation}
to the radial compactification of the fibre-wise tangent bundle of $F.$ 

Then the semiclassical smoothing operators are given by the space of
kernels 
\begin{equation}
\Psi^{-\infty}_{\phi,\scl}(F;V)=\epsilon
^{-d}\{A\in\CI(F[2;\scl];\pi_L^*V\otimes\pi_R^*V'\otimes\pi_R^*\Omega
_{\phi});A=0\Mat H_0\}.
\label{IBG.132}\end{equation}
Here the bundles are further lifted to $F[2,\scl]$ under $\beta$ and the
singular factor, $\epsilon ^{-d}$ (where $d$ is the fibre dimension of $F$) really
corresponds to the behaviour of the measure. Note that in local coordinates,
with bundle coefficients ignored, the kernels are of the form $\epsilon
^{-d}A(x,\frac{y-y'}\epsilon)$ where $x$ is a coordinate in the base, $y$
and $y'$ are local fibre coordinates and $A$ is smooth and Schwartz in the
last, Euclidean, variables.

Again these form an algebra of operators on $\CI(F\times[0,1])$
  acting on the fibres and with $\epsilon\in[0,1],$ as a (singular) parameter. The
  semiclassical symbol corresponds to the restriction of the leading term
  (i.e.\ after removal of $\epsilon ^{-d})$ at the front face, followed by
  fibre Fourier transform
\begin{equation}
\cF:\dCI(\overline{T_{\phi}F};\Omega)\longleftrightarrow \dCI(\overline{T^*_{\phi}F})
\label{IBG.158}\end{equation}
and gives a short exact, multiplicative, sequence
\begin{equation}
\xymatrix{
\epsilon \Psi^{-\infty}_{\phi,\scl}(F;V)\ar[r]&
\Psi^{-\infty}_{\phi,\scl}(F;V)\ar[r]^-{\sigma _{\scl}}
&\dCI(\overline{T^*_{\phi}F};\hom V).
}
\label{IBG.134}\end{equation}

The two spaces of semiclassical operators 
\begin{equation}
\begin{gathered}
\cG^{-\infty}_{\phi,\scl}(F;M(N))=\{\Id+A\text{ invertible };A\in \Psi^{-\infty}_{\phi,\scl}(F;M(N)\}\\
\cP^{-\infty}_{\phi,\scl}(F;M(2N))=\{P_0+B\text{ idempotents};B\in
\Psi^{-\infty}_{\phi,\scl}(F;M(2N)\} 
\end{gathered}
\label{IBG.135}\end{equation}
can be used in place of the algebras of pseudodifferential operators in the
usual approach.

To define the semiclassical version of the index map observe that the K-theory of the base
may be realized in terms of smoothing operators on the fibres of any
non-trivial bundle such as $F.$ For odd K-theory
\begin{equation}
K^1(Y)=\cG^{-\infty}_\phi(F;M(N))/\simeq.
\label{IBG.99}\end{equation}
Similarly for even K-theory
\begin{equation}
K^0(Y)=\{B=P_N+b,\ b\in \Psi^{-\infty}_\phi(Y;M(2N));B^2=B\}/\simeq.
\label{IBG.100}\end{equation}
Here the equivalence relation can be taken to be (smooth) homotopy.
The result is independent of $N$ via a stability argument; the trivial
bundles may be replaced by vector bundles over $F.$

The identifications \eqref{IBG.99} and \eqref{IBG.100} correspond to
appropriate density statements for finite rank operators in smoothing
operators. Namely given any invertible family $\Id+A,$ with $A\in
\Psi^{-\infty}_\phi(F;M(N)),$ there is a trivial finite rank subbundle with
idempotent projecting onto it (given by a finite rank family of smoothing operator)
\begin{equation*}
E\subset \CI(F;V\otimes\bbC^N),\ \Pi_E:\CI(F;V\otimes\bbC^N)\longrightarrow \CI(Y;E)\subset
\CI(F;V\otimes\bbC^N)
\label{IBG.138}\end{equation*}
such that $\Id+A$ is homotopic through invertible families to $\Id+A'$
where $P_EA'P_E=A'.$ There is a similar reduction for even K-theory to
finite rank idempotents commuting with $P_N.$

Now the index map in odd K-theory corresponds to the two maps 
\begin{equation}
\xymatrix{
&\dcG(\overline{T^*_{\phi}F};M(N))\ar[r]& K^1(T^*_{\fib}F)\ar@{-->}[dd]^{\ind_{\scl}}\\
\cG^{-\infty}_{\phi,\scl}(F;M(N))\ar@{->>}[ur]^{\sigma _{\scl}}\ar[dr]_{\epsilon =1}\\
&\cG^{-\infty}_{\phi}(F;M(N))\ar[r]&K^1(Y).
}
\label{IBG.136}\end{equation}
\begin{proposition}\label{IBG.173} For the group of invertible
  perturbations of the identity the symbol map, for any vector bundle $V$
  over $F$
\begin{equation}
\cG^{-\infty}_{\phi,\scl}(F;V)\longrightarrow \dcG(\overline{T^*_{\phi}F};\hom(V))
\label{IBG.174}\end{equation}
is an homotopy equivalence.
\end{proposition}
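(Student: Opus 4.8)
The plan is to produce an explicit homotopy inverse to $\sigma_{\scl}$, following the pattern of \cite{MMS2}. First I would construct a continuous section $q\colon\dcG(\overline{T^*_{\phi}F};\hom V)\longrightarrow\cG^{-\infty}_{\phi,\scl}(F;V)$ of the symbol map. Fixing auxiliary geometric data (a fibre metric on $F$ and a connection on $V$), the standard semiclassical quantization $a\longmapsto A_a\in\Psi^{-\infty}_{\phi,\scl}(F;V)$ is globally defined, depends linearly and continuously on $a$, and satisfies $\sigma_{\scl}(A_a)=a$. By semiclassical ellipticity, if $\Id+a$ is invertible then $\Id+A_a(\epsilon)$ is invertible for $\epsilon\in[0,\delta(a)]$, where $\delta(a)\in(0,1]$ may be chosen to depend continuously on $a$. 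Reparametrising the semiclassical parameter by a non-decreasing smooth $g_a\colon[0,1]\to[0,\delta(a)/2]$ that is the identity near $0$ and constant near $1$, set $q(\Id+a):=\Id+A_a\circ g_a$. This lies in $\cG^{-\infty}_{\phi,\scl}(F;V)$: it is invertible for every $\epsilon\in[0,1]$ because $g_a$ takes values where $\Id+A_a$ is invertible, it is unchanged near the front face $\ff$ so that $\sigma_{\scl}\circ q$ is the identity, and it is continuous in $a$.

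Since $\sigma_{\scl}$ is a homomorphism of topological groups admitting the section $q$, it remains to homotope $q\circ\sigma_{\scl}$ to the identity of $\cG:=\cG^{-\infty}_{\phi,\scl}(F;V)$. For $g\in\cG$ the element $r(g):=q(\sigma_{\scl}(g))\,g^{-1}$ lies in the fibre of $\sigma_{\scl}$ over the identity,
\[
G_0:=\{\Id+R\ :\ R\in\epsilon\Psi^{-\infty}_{\phi,\scl}(F;V),\ \Id+R\ \text{invertible}\},
\]
and depends continuously on $g$. Given a deformation retraction $H$ of $G_0$ onto the point $\Id$, the assignment $(g,s)\longmapsto H(r(g),s)\,g$ is a homotopy inside $\cG$ from $q(\sigma_{\scl}(g))$ to $g$; it stays in $\cG$ because $H(r(g),s)$ has trivial symbol. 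Thus the whole statement reduces to the contractibility of $G_0$.

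To contract $G_0$ I would filter $\epsilon\Psi^{-\infty}_{\phi,\scl}(F;V)$ by the ideal $\epsilon^{\infty}\Psi^{-\infty}_{\phi,\scl}(F;V)$, which is the algebra $\CI([0,1];\Psi^{-\infty}_{\phi}(F;V))$ of smoothing families vanishing to infinite order at $\epsilon=0$. The group of invertible perturbations of $\Id$ inside it is contractible: precomposition with $\epsilon\mapsto t\epsilon$, $t\in[0,1]$, is a continuous contraction to $\Id$, and preserves invertibility since $[0,t]\subseteq[0,1]$. The quotient $\epsilon\Psi^{-\infty}_{\phi,\scl}(F;V)\big/\epsilon^{\infty}\Psi^{-\infty}_{\phi,\scl}(F;V)$ is the space of formal $\epsilon$-Taylor series at $\ff\cong\overline{T_{\phi}F}$ with Schwartz coefficients; there every perturbation $\Id+T$ of $\Id$ is invertible by a formal Neumann series in $\epsilon$, so this group is an affine space over a Fr\'echet space and hence contractible. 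The quotient map from $G_0$ onto this group is a Serre fibration, with surjectivity and local sections supplied by asymptotic (Borel) summation followed by the reparametrisation trick of the first paragraph, and with the contractible fibre just described; therefore $G_0$ is contractible.

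I expect the contractibility of $G_0$ to be the main obstacle. A naive rescaling $\epsilon\mapsto t\epsilon$ does \emph{not} contract it, because the interaction of the front-face blow-up with the $\epsilon^{-d}$ density weight in \eqref{IBG.132} is not preserved under it; the two-step filtration is what separates the ``formal'' part, a contractible affine space of Taylor series, from the ``honest family'' part, contractible by genuine rescaling, at the cost of checking that the connecting map is a fibration.
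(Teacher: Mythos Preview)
Your proposal is correct and uses the same essential ingredients as the paper---the ideal $\epsilon^{\infty}\Psi^{-\infty}_{\phi,\scl}(F;V)$, the star-product structure on the quotient \eqref{IBG.176}, and the freedom to rescale the semiclassical parameter---but organizes them differently. The paper argues directly: given an invertible symbol, compose with a quantization of its inverse to get $\Id+\epsilon B$, iterate to reach $\Id+O(\epsilon^{\infty})$, invoke Neumann series for small $\epsilon$, then rescale; this gives surjectivity. For two lifts of the same symbol, the same star-product iteration makes them agree modulo $\epsilon^{\infty}$, hence equal for small $\epsilon$, and a retraction in $\epsilon$ joins them---so fibres are path-connected (and the argument with parameters gives higher connectivity). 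You instead build an explicit section $q$ by quantization-plus-reparametrisation, reduce $q\circ\sigma_{\scl}\simeq\Id$ to contractibility of the kernel $G_0$, and prove the latter by a Serre fibration over the formal Taylor-series group with fibre the rescalable $\epsilon^{\infty}$ group. Your route is more explicit about the homotopy equivalence (the paper's proof, read literally, only establishes a $\pi_0$-bijection), at the cost of having to verify the fibration property; the paper's route is shorter but leaves the parametric version implicit.
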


\begin{proof} The symbol map is surjective at the level of algebras in
  \eqref{IBG.134}. Moreover the subspace which can be written somewhat
  informally as
\begin{equation}
\epsilon ^\infty\Psi^{-\infty}_{\phi,\scl}(F;V)=\{A\in\CI(F[2;\scl];\pi_L^*V\otimes\pi_R^*V'\otimes\pi_R^*\Omega
_{\phi});A=0\Mat H_0\cup\ff\}
\label{IBG.175}\end{equation}
is identified by the blow down map with the smoothly parameterized elements
of $\Psi^{-\infty}_{\phi}(F;V)$ which vanish to infinite order at $\epsilon
=0.$ These form an ideal in $\Psi^{-\infty}_{\phi,\scl}(F;V)$ with quotient 
\begin{equation}
\Psi^{-\infty}_{\phi,\scl}(F;V)/\epsilon
^\infty\Psi^{-\infty}_{\phi,\scl}(F;V)=\bigoplus_{k\ge0} \epsilon
^k\dCI(\overline{T^*_\phi F};\hom V)
\label{IBG.176}\end{equation}
with a star product with initial multliplcative term. This is essentially
the same `Moyal' star product as the symbolic product of pseudodifferential
operators. 

It follows that an operator of the form $\Id+A$ with invertible symbol is
invertible for small $\epsilon >0.$ Namely composition with an operator
with symbol $\sigma _{\scl}(\Id+A)^{-1}$ gives $\Id+\epsilon B$ by
\eqref{IBG.134}. Iteration allows the error to be improved to $O(\epsilon
^\infty)$ and the the Neumann series converges to give an invertible
operator for $\epsilon <\epsilon _0.$ Since the parameter in $\epsilon >0$
can be freely rescaled the surjectivity of the symbol map at the group
level follows. Conversely it follows that any two operators with the same
invertible symbol can be smoothly deformed using the star product, to
differ by an element of the ideal and then to be equal for small $\epsilon
>0.$ Then by retracting in the parameter $\epsilon $ towards $\epsilon =1$
they can be connected by a smooth curve in the group.
\end{proof}

So this allows any symbol to be `quantized' to an invertible family which
is well-defined up to homotopy. In view of \eqref{IBG.99}, this
`quantization' defines a map into $K^1(Y).$ Moreover homotopy and stability
at the symbolic level lifts to the group of invertible semiclassical
operators so the index map on the right is well-defined by stabilizing over
$N.$ Indeed the freedom to deform and reparameterize a semiclassical family
in $\epsilon >0$ means that one can arrange that the quantization of a
given invertible symbol, i.e.\ an invertible family with this as
semiclassical symbol, becomes by a family of finite rank smoothing family at
$\epsilon =1.$

The even case is similar. Then the direct analogue of the Atiyah-Singer's quantization by
pseudodifferential operators holds.

\begin{theorem}\label{IBG.137} The semiclassical index map in \eqref{IBG.136} is given by
  push-forward in K-theory.
\end{theorem}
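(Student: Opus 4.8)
The strategy is the classical Atiyah–Singer embedding argument, transplanted to the semiclassical setting where everything is done with (families of) smoothing operators rather than pseudodifferential operators. The topological push-forward for $\phi:F\longrightarrow Y$ is defined by choosing a fibrewise embedding $\iota:F\hookrightarrow Y\times\bbR^N$ (compatible with $\phi$ and the projection $\pi:Y\times\bbR^N\longrightarrow Y$), using the Thom isomorphism for the normal bundle $\nu$ of this embedding to push $K^*(T^*_\phi F)$ forward to $K^*$ of a tubular neighbourhood of $T^*_\phi F$ inside $T^*_\phi(Y\times\bbR^N)=Y\times\bbR^{2N}$, extending by zero to $K^*_c(Y\times\bbR^{2N})$, and then inverting the Bott periodicity isomorphism $K^*(Y)\xrightarrow{\sim}K^*_c(Y\times\bbR^{2N})$. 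So I must show the analytic index map $\ind_{\scl}$ factors the same way. There are two points to verify: (i) $\ind_{\scl}$ is unchanged under the fibrewise embedding $F\hookrightarrow Y\times\bbR^N$, i.e. it is compatible with the Thom/wrong-way map on the analytic side; and (ii) for the trivial bundle $Y\times\bbR^N\longrightarrow Y$ the semiclassical index map is Bott periodicity, i.e. it agrees with the standard identification $K^*_c(Y\times\bbR^{2N})\cong K^*(Y)$.

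\textbf{Step 1: reduce to the model bundle.} First I would record that, given the fibrewise embedding $\iota$, a symbol $a\in\dcG(\overline{T^*_\phi F};M(N))$ (or idempotent, in the even case) quantizes by Proposition~\ref{IBG.173} to an invertible family $\Id+A\in\cG^{-\infty}_{\phi,\scl}(F;\bbC^N)$, and I want to relate its value at $\epsilon=1$ to the value at $\epsilon=1$ of a quantization over $Y\times\bbR^N$ of the image symbol $\iota_!a$. The mechanism is a semiclassical version of the standard fact that the analytic index is preserved by an embedding: one realizes the Thom class of $\nu\oplus\nu$ (a Bott element along the fibres of $T^*_\phi(Y\times\bbR^N)\to T^*_\phi F$) and shows that multiplying $a$ by this class and then quantizing on $Y\times\bbR^N$ produces, at $\epsilon=1$, an operator homotopic in $\cG^{-\infty}_\pi(Y\times\bbR^N;\bbC^{N'})$ — equivalently in $\cG^{-\infty}_\phi(F;\bbC^{N'})$ via the density statement — to the original quantization of $a$. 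Here the convenient feature of the semiclassical picture is that the front face $\ff(F[2,\scl])$ is canonically $\overline{T_\phi F}$, so the product structure on symbols in \eqref{IBG.176} is literally the Moyal product and the usual Atiyah–Singer bookkeeping with symbols transfers verbatim; the only care needed is that the normal-bundle directions are treated at the same semiclassical scale, which is arranged by putting the normal Gaussian at scale $\epsilon$.

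\textbf{Step 2: the trivial bundle is Bott periodicity.} For $\pi:Y\times\bbR^N\longrightarrow Y$ one must show that $\ind_{\scl}:K^*(T^*_\pi(Y\times\bbR^N))=K^*_c(Y\times\bbR^{2N})\longrightarrow K^*(Y)$ is the inverse Bott map. This is the heart of the matter: it is the statement that quantizing the Bott symbol (the standard generator, a rank-one perturbation built from the Gaussian and the linear phase on $\bbR^{2N}$) by a semiclassical smoothing family and restricting to $\epsilon=1$ gives the generator of $K^*(Y)$. I would prove it by an explicit computation with the harmonic-oscillator/Mehler kernel: the Bott element is represented by the Bargmann projection (the ground-state projection of the fibrewise harmonic oscillator), its semiclassical quantization has an explicit Mehler-type kernel on $(Y\times\bbR^N)[2,\scl]$, and at $\epsilon=1$ this is a rank-one family whose class in $K^0(Y)=[Y;\text{Fredholms}]$ is by inspection $1\in K^0(Y)$ (in the odd case one suspends). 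This is exactly the computation that underlies the Atiyah–Singer index theorem on $\bbR^n$ and its adaptation here is where the real work sits.

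\textbf{Main obstacle.} The hard part is Step 2 together with the functoriality needed to glue Steps 1 and 2: one must check that the semiclassical index is genuinely multiplicative for external products with the model bundle $Y\times\bbR^N$ (so that the embedding argument closes up), and that the Thom-class multiplication of Step 1 is compatible with the Bott computation of Step 2 with all $\epsilon$-scalings consistent. Concretely, the obstacle is to verify that $\ind_{\scl}$ respects the composition $K^*(T^*_\phi F)\xrightarrow{\text{Thom}}K^*_c(T^*_\pi(Y\times\bbR^N))\xrightarrow{\ind_{\scl}}K^*(Y)$ and that the second arrow is Bott — i.e. the excision/normalization axioms of an index map in the sense of Atiyah–Singer. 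Once these are in place the theorem follows, because the topological push-forward is characterized by exactly these properties (naturality under embeddings and normalization on the point/trivial case), and $\ind_{\scl}$ has been shown to share them. I would conclude by invoking that uniqueness: any natural transformation $K^*(T^*_\phi F)\longrightarrow K^*(Y)$ that is compatible with fibrewise embeddings and reduces to Bott periodicity for trivial bundles coincides with topological push-forward, hence \eqref{IBG.136} equals $\phi_!$.
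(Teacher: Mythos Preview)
Your overall strategy is the same as the paper's: the Atiyah--Singer embedding proof, transplanted to semiclassical smoothing operators. The paper, however, organizes the argument into three explicit propositions --- Extension (Proposition~\ref{IBG.139}), Iteration (Proposition~\ref{IBG.140}), and Thom isomorphism (Proposition~\ref{IBG.141}) --- and the point you correctly flag as the ``main obstacle'' (compatibility of $\ind_{\scl}$ with the embedding, i.e.\ multiplicativity under the normal-bundle fibration) is handled by a concrete mechanism you do not supply: a \emph{doubly-semiclassical} algebra built on a space with two blow-ups (one for each fibration in the tower $G\to F\to Y$), whose four boundary homomorphisms furnish the commuting square that identifies $\ind_{\scl}$ for the composite with the composite of the two individual $\ind_{\scl}$'s. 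Your phrase ``putting the normal Gaussian at scale $\epsilon$'' is the right intuition but does not by itself give the two-parameter resolution needed to make the homotopy rigorous; this is where the paper's Section~\ref{S.Ext} does real work.

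For the Thom/Bott step, your proposed Mehler-kernel computation is a legitimate alternative to the paper's route, which instead builds the Thom element from Pauli matrices and the complexified Clifford algebra and reads off the index from a trace computation; either approach works. Note also that the paper treats the extension step (open embedding of fibrations with corners) separately via an analysis of how the blow-up spaces include into one another, rather than folding it into the multiplicativity statement as you do.
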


In fact, assuming the Atiyah-Singer index theorem, this can be proved by
deformation back to the pseudodifferential picture. Since we wish to
generalize this result to the twisted case, we recall from \cite{MMS2} a direct
semiclassical proof, which parallels that of Atiyah and Singer.

\section{Extensions of semiclassical quantization}\label{S.Ext}

We need to generalize the semiclassical index map in several respects to
make the proof through embedding run smoothly. More detail on these
constructions can be found in \cite{GenProd}.

The first such generalization is to allow the operators to take values in
the smoothing operators on a second bundle. Thus let
$\gamma:G\longrightarrow F$ be a compact fibre bundle over $F,$ so
$\phi=\psi\circ\gamma$ fibres $G$ over $Y:$
\begin{equation}
\xymatrix{
G\ar[r]^-{\gamma}\ar[dr]_-{\phi}&F\ar[d]^-{\psi}\\
&Y.
}
\label{IBG.151}\end{equation}
Then the smoothing operators on the
fibres of $G$ as a bundle over $Y$ have kernels which are smooth on the
fibre product
\begin{equation}
G^{[2]}\longrightarrow Y.
\label{IBG.146}\end{equation}
These may be considered as smooth sections over $F^{[2]}$ of the smoothing
operators on the fibres of $G.$ At a point $(x,y,y')\in F^{[2]}$
these take values in $\CI(G_y\times G_{y'}),$ the smoothing operators between
the two fibres of $G.$

Thus the smoothing operators on the fibres of $G$ as a bundle
over $Y$ can be interpreted as smoothing operators on the fibres of $F$
with values in the modules of smoothing operators between the fibres of $G$
as a bundle over $Y$ 
\begin{equation}
\Psi^{-\infty}_{\phi}(G;W)=\Psi^{-\infty}_{\psi}(F;\Psi^{-\infty}_{\gamma}(G;W))
\label{IBG.147}\end{equation}
where $W$ is a bundle over $G$ and the notation does not quite reflect the
fact that the image space on the right is the bundle of modules.

The fibre diagonal of $F$ lifts to an embedded submanifold of $G^{[2]}$
fibering over $Y$ so the semiclassical calculus on the fibres of $F$ `with
values in the smoothing operators on $G$' can be defined directly as the
space of kernels on
\begin{equation}%
\begin{gathered}
F[2,\scl;G]=[G^{[2]}\times[0,1];(\gamma^{[2]})^{-1}\Diag_F\times\{0\}],\\
\Psi^{-\infty}_{\psi,\scl}(F;\Psi^{-\infty}_{\gamma}(G;W))=\{A\in\CI(F[2,\scl;G];A\equiv
0\Mat\beta^{\#}\{\epsilon =0\}\}\\
\sigma_{\scl}:\Psi^{-\infty}_{\psi,\scl}(F;\Psi^{-\infty}_{\gamma}(G;W))\longrightarrow \dCI(\overline{T^*_{\fib}F};\Psi^{-\infty}_{\gamma}(G;W)).
\end{gathered}
\label{IBG.148}\end{equation}
Here the, surjective, semiclassical symbol is defined over the fibre diagonal of $F^{[2]}$
over $Y$ and so does take values in the sections of the bundle of smoothing
algdbras on the fibres of $G$ over $F.$

The index map in the odd case then simplifies since for the group 
\begin{equation}
\dcG(\overline{T^*_{\phi}F};\Psi^{-\infty}_{\gamma}(G,W))=\{\Id+a\text{ invertible};a\in\dCI(\overline{T^*_{\phi}F};\Psi^{-\infty}_{\gamma}(G,W))\}
\label{IBG.159}\end{equation}
to define the odd K-theory of $T^*_{\phi}F$ stabilization is not required.
So letting $\cG^{-\infty}_{\phi,\scl}(F;\Psi^{-\infty}_{\gamma}(G;W))$ be
the corresponding group of invertibles gives the commutative diagramme
\begin{equation}
\xymatrix{
&\dcG(\overline{T^*_{\phi}F};\Psi^{-\infty}_{\gamma}(G,W))\ar@{>>}[r]& K^1(T^*_{\fib}F)\ar@{-->}[dd]^{\ind_{\scl}}\\
\cG^{-\infty}_{\phi,\scl}(F;\Psi^{-\infty}_{\gamma}(G,W))\ar@{->>}[ur]^{\sigma _{\scl}}\ar[dr]_{\epsilon =1}\\
&\cG^{-\infty}_{\phi}(G;W)\ar[r]&K^1(Y).
}
\label{IBG.149}\end{equation}

That this is the same map as in \eqref{IBG.136} follows from the existence of
embeddings of finite rank bundles in $\Psi^{-\infty}_{\gamma}(G,W),$ as in
\eqref{IBG.99}. The even case is similar.

The second generalization is to allow a degree of non-compactness of the
fibres of the fibration, $F.$ Since the simple properties of the
semiclassical calculus depend on the existence of a functional calculus we
restrict attention to the the case of fibres which are the interiors of
compact manifolds with corners
\begin{equation}
\xymatrix{
F\ar@{^(->}[r]\ar[dr]_{\phi}&\tilde F\ar[d]^{\phi}\\
&Y.
}
\label{IBG.142}\end{equation}
Then as the smoothing algebra we take 
\begin{equation}
\dot\Psi^{-\infty}_{\phi}(F;V)=\dCI(\tilde
F^{[2]};\pi_L^*V\otimes\pi_R^*V'\otimes\Omega _{\fib})
\label{IBG.143}\end{equation}
where the notation indicates that the kernels are required to vanish to
infinite order on both boundaries of the fibres of $\tilde F^{[2]}.$ This
algebra has essentially the same properties as the boundaryless case.

Defining the semiclassical smoothing calculus for fibres which are the
interiors of manifolds with coners requires a further modification, since
the fibre diagonal of $\tilde F^{[2]}$ does not pass transversally through
the boundary. So to be in a position to blow up the diagaonal at $\{\epsilon
=0\}$ we first resolve the intersection of this submanifold with the
boundary. Since $\Diag_{\tilde F}$ is naturally diffeomorphic to $\tilde F$
this consists of the embedded submanifolds $B\times\{0\}\subset
\Diag_{\tilde F}\times\{0\}$ where $B\in\cM_*(\tilde F)$ runs over the
boundary faces (including hypersurfaces) of $\tilde F.$ After blowing these
up, in order of increasing dimension, the lift of $\Diag_{\tilde
  F}\times\{0\}$ meets the boundary transversally, so we can define
\begin{equation}
\tilde F[2,\scl]=[\tilde F^{[2]};\cM_*(\tilde F)\times\{0\};\Diag_{\tilde F}\times\{0\}].
\label{IBG.150}\end{equation}
The `front face', $\ff,$ is the boundary hypersurface introduced by the last blow-up.

As the semiclassical smoothing operators in this case we take the kernels 
\begin{equation}%
\begin{gathered}
\tilde F[2,\scl]=[(F^{[2]})_{\tb}\times[0,1];\Diag\times\{0\}],\\
\begin{aligned}
\Psi^{-\infty}_{\phi,\scl}(\tilde F;V)=\epsilon
^{-d}\{A\in\CI(\tilde F[2;\scl];&\pi_L^*V\otimes\pi_R^*V'\otimes\pi_R^*\Omega
_{\phi});\\
&A=0\Mat\pa\tilde F[2;\scl]\setminus(\ff\cup\{\epsilon =1\})\}.
\end{aligned}
\end{gathered}
\label{IBG.145}\end{equation}
So, by assumptions, these kernels vanish to infinite order all of the boundary
hypersurfaces introduced by the resolution in \eqref{IBG.150}, except for
the last step and at all the `old boundaries' of $\tilde F^{[2]}.$ This
ensures that in $\epsilon >0$ the kernels reduce to the smoothing operators
in \eqref{IBG.143} and that they form an algebra as before. 

In fact there is no obstruction to combining these two extensions, nor to
allowing the base to be the interior of a compact manifold with corners.

The third generalization is to a doubly-semiclassical algebra. Thus again
we consider an iterated fibration of compact manifolds as in \eqref{IBG.151}. Then
the fibre product of $G$ over $Y$ is 
\begin{equation}
G^{[2]}_\phi=\{(p,p')\in G^2;\phi(p)=\phi(p')\}.
\label{IBG.152}\end{equation}
The image of this under $\gamma \times\gamma :G^2\longrightarrow F^2$ is
the fibre product of $F$ with itself over $Y,$ $F^{[2]}_{\psi},$ and
$G^{[2]}_{\phi}\longrightarrow F^{[2]}_{\psi}$ is a fibration with fibre
$Z\times Z$ where $Z$ is the typical fibre of $\gamma.$ Within
$G^{[2]}_{\phi}$ lie the two fibre diagonals 
\begin{equation}
\Diag_G\subset\Diag_F^G\subset G^{[2]}_{\phi}
\label{IBG.153}\end{equation}
where $\Diag_F^G$ is the preimage of the diagonal in $F.$

The `doubly semiclassical space' we take is 
\begin{equation}
G[2;\text{2-scl}]=
\big[G^{[2]}_{\phi}\times[0,1]_{\epsilon}\times [0,1]_{\delta};
\Diag_G\times\{\epsilon =0\};\Diag_F^G\times\{\delta =0\}\big].
\label{IBG.154}\end{equation}
Under the first blow-up the remaining centre lifts to the diagonal, in
$\delta =0,$ of the adiabatic space discussed above, so the second blow-up is well-defined.

The resulting manifold has six boundary hypersurfaces, which we denote as
$\ff_G,$ $\ff_F,$ $H_{\epsilon =0},$ $H_{\delta =0},$ $H_{\epsilon =1}$
and $H_{\delta =1};$ here the last four are really the proper transforms of
the boundary hypersurfaces of $G^{[2]}_{\phi}\times[0,1]^2.$ We define the
doubly-semiclassical operators acting on sections of a bundle
$W\longrightarrow G$ through the space of kernels
\begin{equation}
\Psi^{-\infty}_{\text{2-scl}}(G;W)=\epsilon
^{-d}\delta ^{-q}\big\{A\in\CI(G[2;\text{2-scl}];
\pi_L^*W\otimes\pi_R^*W'\otimes\pi_R^*\Omega_{\psi};A\equiv0\Mat
H_{\delta=0}\cup H_{\epsilon=0}\big\}
\label{IBG.155}\end{equation}
where $d$ and $q$ are the fibre dimensions.  This is an algebra with the
product determined by composition of smoothing operators on the fibres of
$G$ over $Y$ away from the boundary faces. This can be seen by local
computation as for the
semiclassical case but is discussed abstractly in \cite{GenProd}.

We use this doubly semiclassical algebra, in Prositition~\ref{IBG.140}
below, to show how the index iterates. The index map for the combined
fibration corresponds to $\delta =1,$ the index map for the first
fibration (lifted to the fibre cotangent bundle of the second) corresponds
to $\delta =0$ and the second index map corresponds to $\epsilon =1.$

The front face for the first blow-up in \eqref{IBG.154} is the product of
the adiabatic front face for $\phi$ and the interval $[0,1]_{\delta}.$
It is then blown up at $\delta =0$ by the second blow up, but remains
contractible to $T_{\phi}G.$
In the second blow-up \eqref{IBG.155} the centre is the lifted diagonal for
the adiabatic space in $\delta =0.$ Thus
\begin{equation}
\ff_F\equiv\overline{T_{\psi}F}\times_{\psi}G[2;\ad]
\label{IBG.161}\end{equation}
is the adiabatic space lifted to the fibre tangent space for $F$ (rescaled
at $\epsilon =0$). 

As a result, the leading terms in the kernels at the four boundary
hypersurfaces, where they are not required to be trivial, define four
homomorphisms
\begin{equation}
\begin{gathered}
N_{\delta  =0}:\Psi^{-\infty}_{\text{2-scl}}(G;W)\longrightarrow
\Psi^{-\infty}_{\gamma,\scl}(G\times_\gamma\overline{T^*F};W),\\
N_{\delta =1}:\Psi^{-\infty}_{\gamma ,\scl}(G;W)\longrightarrow
\Psi^{-\infty}_{\psi,\scl}(F;\Psi^{-\infty}_{\gamma}(G;W))\\
N_{\epsilon=1}:\Psi^{-\infty}_{\text{2-scl}}(G;W)\longrightarrow
\Psi^{-\infty}_{\phi,\scl}(G;W)\\
\sigma _{\text{2-scl}}:\Psi^{-\infty}_{\text{2-scl}}(G;W)\longrightarrow\dCI(\ff_G;\hom{W}).
\end{gathered}
\label{IBG.156}\end{equation}
Other than $N_{\delta =1}$ these are surjective.

Thus the range of the first `normal operator' is the space of semiclassical
smoothing operators for the pull-back of $\gamma:G\longrightarrow F$ to
$\overline{T^*F}.$ The range of the second is the semiclassical algebra for
$\psi:F\longrightarrow Y$ with values in the smoothing operators on the
fibres of $G$ over $F,$ as discussed above. The range of the third normal
operator is the semiclassical algebra for $G$ as a bundle over $Y.$ Finally
the last `semiclassical symbol' maps surjectively to the Schwartz functions
on the manifold with corners $\ff_G$ in \eqref{IBG.161}. From the
construction it follows that for the associated groups the restriction maps
\begin{equation}
\begin{gathered}
\big|_{H_{\epsilon =0}}:\dcG(\ff_G;\hom{W})\longrightarrow
\dcG(\overline{T_\phi G};\hom(W))\Mand\\
\big|_{H_{\delta =0}}:\dcG(\ff_G;\hom{W})\longrightarrow
\dcG(\overline{T_\gamma G\times_F T_{\psi}F};\hom(W))
\end{gathered}
\label{IBG.157}\end{equation}
are homotopy equivalences. 

\section{Properties of the index map}

To carry this `embedding' proof through we first check three properties of
semiclassical quantization.

\begin{proposition}[Extension]\label{IBG.139} If $\psi_i:F_i\longrightarrow
  Y,$ $i=1,2,$ are bundles of manifolds with corners over $Y$ of the same
  fibre dimension and the fibres of $F_1$ are contained in the interiors of
  the fibres of $F_2$ then the semiclassical index maps are consistent
  giving a commutative diagramme
\begin{equation}
\xymatrix{
K^*(T^*_{\psi_1}\inn F_1)\ar@{^(->}[rr]\ar[dr]_{\ind_{\scl}}&&K^*(T^*_{\psi_2}\inn F_2)\ar[dl]^{\ind_{\scl}}\\
&K^*(Y)
}
\label{IBG.165}\end{equation}
\end{proposition}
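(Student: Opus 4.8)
The plan is to realise the top arrow of \eqref{IBG.165} by extension of kernels (and symbols) by zero, and to check that this operation commutes with every map entering the definition of $\ind_{\scl}$. By stabilisation it suffices to treat trivial coefficients, so vector bundles play no essential role and I write $M(N)$ throughout. First I would record the geometry. Because the fibres of $F_1$ sit inside the interiors of the fibres of $F_2$ and the two fibre dimensions agree, $\inn F_1$ is an open codimension‑zero sub‑fibration of $\inn F_2$, the compactification $\tilde F_1$ embeds as a compact codimension‑zero submanifold with corners of $\inn F_2$, and hence $T^*_{\psi_1}\inn F_1$ is an open subset of $T^*_{\psi_2}\inn F_2$; the arrow marked $\hookrightarrow$ in \eqref{IBG.165} is the induced open‑inclusion (extension‑by‑zero) map on vertically Schwartz K‑theory. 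Correspondingly the iterated blow‑up \eqref{IBG.150} producing $\tilde F_1[2,\scl]$ has all of its centres, $\cM_*(\tilde F_1)\times\{0\}$ and then $\Diag_{\tilde F_1}\times\{0\}$, contained cleanly in the corresponding centres for $\tilde F_2$, so that $\tilde F_1[2,\scl]$ is canonically the preimage of $\tilde F_1^{[2]}\times[0,1]$ in $\tilde F_2[2,\scl]$, with the front face $\ff$ and the face $\{\epsilon=1\}$ of the former contained in those of the latter, while every other boundary hypersurface of $\tilde F_1[2,\scl]$ maps into the interior of $\tilde F_2^{[2]}\times[0,1]$ away from $\ff$ and $\{\epsilon=1\}$.

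The core observation is then that, by \eqref{IBG.145}, a kernel in $\Psi^{-\infty}_{\psi_1,\scl}(F_1;M(N))$ already vanishes to infinite order at exactly those boundary hypersurfaces of $\tilde F_1[2,\scl]$ that sit in the interior of $\tilde F_2[2,\scl]$, so extending it by zero produces a smooth kernel on $\tilde F_2[2,\scl]$ with the same singular prefactor $\epsilon^{-d}$ (this is where $d_1=d_2=d$ is used) that still vanishes to infinite order at the boundary hypersurfaces of $\tilde F_2[2,\scl]$ coming from $\partial\tilde F_2$. This defines a map $e_{\scl}\colon\Psi^{-\infty}_{\psi_1,\scl}(F_1;M(N))\to\Psi^{-\infty}_{\psi_2,\scl}(F_2;M(N))$, and similarly $e\colon\Psi^{-\infty}_{\psi_1}(F_1;M(N))\to\Psi^{-\infty}_{\psi_2}(F_2;M(N))$ and $e_\sigma\colon\dCI(\overline{T^*_{\psi_1}\inn F_1};M(N))\to\dCI(\overline{T^*_{\psi_2}\inn F_2};M(N))$. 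Each is a (non‑unital) $*$‑algebra homomorphism because the composition \eqref{IBG.9} is fibrewise over $Y$ and the extended factors vanish off $\tilde F_1$; reading off leading terms at $\ff$ gives $\sigma_{\scl}\circ e_{\scl}=e_\sigma\circ\sigma_{\scl}$ in \eqref{IBG.134}, and restriction at $\epsilon=1$ gives $e\circ(\cdot)|_{\epsilon=1}=(\cdot)|_{\epsilon=1}\circ e_{\scl}$. Applying $e_{\scl}$ to the relation $(\Id+A)(\Id+B)=\Id$ on $F_1$ shows $\Id+e_{\scl}(A)$ is invertible on $F_2$ whenever $\Id+A$ is on $F_1$ (and likewise $P_N+e(b)$ is idempotent whenever $P_N+b$ is), so these maps descend to the relevant groups of invertibles and to idempotents, compatibly with $\sigma_{\scl}$ and with $\epsilon=1$.

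It remains to identify the induced maps on the K‑theory groups. For $e_\sigma$, passing to invertible perturbations of the identity, the induced map $K^*(T^*_{\psi_1}\inn F_1)\to K^*(T^*_{\psi_2}\inn F_2)$ is, by the very definition of these groups through Schwartz symbols, the open‑inclusion map, i.e. the top arrow of \eqref{IBG.165}. For $e$ I would invoke the finite‑rank reduction recalled around \eqref{IBG.99}: a class in $K^1(Y)$ represented by $\Id+A$ with $P_EAP_E=A$ for a finite‑rank subbundle $E\subset\CI(F_1;M(N))$ over $Y$ is sent by $e$ to $\Id+e(A)$, which acts as the same automorphism on the isomorphic subbundle $e(\Pi_E)\CI(F_2;M(N))\cong E$ (isomorphism = extension by zero); hence $e$ induces the \emph{identity} on $K^1(Y)$, and the even case is identical. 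Combining: given an invertible symbol $a$ on $T^*_{\psi_1}\inn F_1$, quantise it on $F_1$ by Proposition~\ref{IBG.173} to $\Id+A$; then $\Id+e_{\scl}(A)$ is a quantisation of $e_\sigma(a)$ on $F_2$, restricting at $\epsilon=1$ and using that $e$ is the identity on $K^1(Y)$ gives $\ind_{\scl}^{F_2}(e_\sigma(a))=\ind_{\scl}^{F_1}(a)$, which is the commutativity of \eqref{IBG.165}; the even version runs the same with idempotents.

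I expect the main obstacle to be precisely the bookkeeping of the first two paragraphs: verifying that the iterated blow‑up defining $\tilde F_1[2,\scl]$ really is the restriction of $\tilde F_2[2,\scl]$, and, crucially, that the infinite‑order vanishing built into the $F_1$‑semiclassical calculus at its ``old'' boundaries is exactly what makes extension by zero land in the $F_2$‑calculus with the correct prefactor. Granting this, the remaining points — that $e_{\scl},e,e_\sigma$ are algebra maps preserving invertibility and idempotency, that they intertwine $\sigma_{\scl}$ and the restriction at $\epsilon=1$, and that $e$ is the identity on $K^1(Y)$ — are routine; the abstract version of this kind of extension argument is carried out in \cite{GenProd}.
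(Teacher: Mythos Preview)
Your approach---realise the top arrow by extension-by-zero of kernels and symbols, check this gives (non-unital) algebra inclusions compatible with $\sigma_{\scl}$ and with restriction at $\epsilon=1$, then read off consistency of the index maps---is exactly the paper's, and your third paragraph is in fact more explicit than the paper about why the induced maps on $K^*$ are the open-inclusion map and the identity respectively.

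The one place where your argument diverges from the paper, and where the geometry is not quite right, is precisely the obstacle you flag. You assert that the centres $\cM_*(\tilde F_1)\times\{0\}$ are ``contained cleanly in the corresponding centres for $\tilde F_2$'' and hence that $\tilde F_1[2,\scl]$ is the preimage of $\tilde F_1^{[2]}\times[0,1]$ in $\tilde F_2[2,\scl]$. But the boundary faces of $\tilde F_1$ lie in the \emph{interior} of $\tilde F_2$, so they have no counterpart among the centres blown up in forming $\tilde F_2[2,\scl]$; the preimage you describe is only $[\tilde F_1^{[2]}\times[0,1];\Diag_{\tilde F_1}\times\{0\}]$, which for $\partial\tilde F_1\neq\emptyset$ is not $\tilde F_1[2,\scl]$. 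The paper's fix is a blow-up reordering: since each $B\times\{0\}$ with $B\in\cM_*(\tilde F_1)$ is a submanifold of $\Diag_{\tilde F_2}\times\{0\}$, one may first form $\tilde F_2[2,\scl]$ and \emph{then} blow up the lifted $\cM_*(\tilde F_1)\times\{0\}$, obtaining
\[
\tilde F_1[2,\scl]\subset\bigl[\tilde F_2[2,\scl];\,\cM_*(\tilde F_1)\times\{0\}\bigr].
\]
The infinite-order vanishing you invoke now does the rest: kernels in $\Psi^{-\infty}_{\psi_1,\scl}(F_1)$ vanish to infinite order at the faces introduced by these extra blow-ups, hence push down to smooth kernels on $\tilde F_2[2,\scl]$ and give the inclusion $\Psi^{-\infty}_{\psi_1,\scl}(F_1;W)\subset\Psi^{-\infty}_{\psi_2,\scl}(F_2;W)$. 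With this correction your argument goes through unchanged.
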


\begin{proof} For smoothing operators
  $\Psi^{-\infty}_{\psi_1}(F_1;W)\subset\Psi^{-\infty}_{\psi_2}(F_2;W),$
  for any bundle $W$ over $F_2$ corresponds to the inclusion of the smooth
  kernels with support in $F_1^{[2]}\subset F_2^{[2]}.$ Essentially the
  same is true for the semiclassical spaces, except that there is no
  immediate inclusion of the resolved space. That is, the semiclassical
  smoothing operators for $F_1$ may be identified with the semiclassical
  operators for $F_2$ with the corresponding support property in $\epsilon
  >0.$ In the construction of $F_1[2,\psi_1\scl],$ as in \eqref{IBG.150},
  the boundary faces of the diagonal are blown up first. In
  $F_2^{[2]}\times[0,1]$ these boundary faces are all submanifolds of
  $\Diag_{F_2}\times\{0\}$ so they can be blown up \emph{after}
    $\Diag_{F_2}$ in the construction of $F_2[2;\psi_2\scl].$ It follows
      that  
\begin{equation}
F_1[2;\psi_1\scl]\subset [F_2[2,\psi_2\scl];\cM_*(\tilde F)\times\{0\}]
\label{IBG.166}\end{equation}
and the support property gives the identification  
\begin{equation}
\Psi^{-\infty}_{\psi_1,\scl}(F_1;W)\subset\Psi^{-\infty}_{\psi_2,\scl}(F_2;W)
\label{IBG.167}\end{equation}
from which the consistency of the index maps follows.
\end{proof}

\begin{proposition}[Iteration]\label{IBG.140} For a double fibration
  \eqref{IBG.151} of compact manifolds with corners the index map for
$\phi$ is the composite of the index maps for
  $\widetilde{\gamma}:G\times_F\overline{T^*_{\psi}F}\longrightarrow T^*_{\psi}F$ and $\psi.$ 
\end{proposition}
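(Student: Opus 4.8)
The plan is to use the doubly-semiclassical algebra $\Psi^{-\infty}_{\text{2-scl}}(G;W)$ of \eqref{IBG.155} as a single object interpolating between the three index maps in question. Concretely, one forms the group of invertibles $\cG^{-\infty}_{\text{2-scl}}(G;W)=\{\Id+A;A\in\Psi^{-\infty}_{\text{2-scl}}(G;W),\ \Id+A\text{ invertible}\}$ and chases it around the square of restriction maps coming from the four normal homomorphisms in \eqref{IBG.156}. The index map for $\phi$ is read off by restricting to $\epsilon=\delta=1$; the index map for $\psi$ with values in the fibre smoothing operators on $G$ over $F$ is obtained by setting $\delta=1$ and then $\epsilon=1$ (via $N_{\delta=1}$ followed by the ordinary semiclassical index for $\psi$ as in \eqref{IBG.149}); and the index for $\widetilde\gamma:G\times_F\overline{T^*_\psi F}\longrightarrow T^*_\psi F$ corresponds to first applying $N_{\delta=0}$ and then $\epsilon=1$. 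So the statement is exactly the assertion that these two ways of travelling from the doubly-semiclassical symbol to $K^*(Y)$ agree.

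First I would record that, just as in Proposition~\ref{IBG.173}, the doubly-semiclassical symbol map $\sigma_{\text{2-scl}}$ is a surjection of algebras with kernel an ideal carrying a star product whose leading term is pointwise multiplication, so that on the level of groups $\cG^{-\infty}_{\text{2-scl}}(G;W)\to\dcG(\ff_G;\hom W)$ is a homotopy equivalence: an invertible full symbol lifts to an invertible doubly-semiclassical family, unique up to homotopy after rescaling the two parameters. This is the same Neumann-series-with-error-improvement argument as before, now iterated in both $\epsilon$ and $\delta$; it is routine once one knows $\Psi^{-\infty}_{\text{2-scl}}(G;W)$ is an algebra, which is granted by \eqref{IBG.155} and \cite{GenProd}. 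Next I would use \eqref{IBG.157}: the restriction maps from $\dcG(\ff_G;\hom W)$ to the symbol spaces over $\overline{T_\phi G}$ and over $\overline{T_\gamma G\times_F T_\psi F}$ are homotopy equivalences, so the choice of where to start the quantization does not matter up to homotopy — all three index maps in the statement are well-defined and factor through $K^*(\ff_G)\cong K^*(T^*_\phi G)$.

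With these in hand the proof becomes a diagram chase. Take a representative $\Id+A\in\cG^{-\infty}_{\text{2-scl}}(G;W)$ of a class in $K^1(T^*_\phi F)$ (pushed up to $\ff_G$); the even case uses idempotents $P+B$ in the evident way. Its restriction $N_{\epsilon=1}(\Id+A)=\Id+A|_{H_{\epsilon=1}}$ lives in $\cG^{-\infty}_{\phi,\scl}(G;W)$, and setting $\epsilon=1$ there is, by definition, the index for $\phi$. On the other hand, $N_{\delta=0}(\Id+A)$ is a semiclassical family for $\widetilde\gamma$ over $T^*_\psi F$, whose index at $\epsilon=1$ is the index for $\widetilde\gamma$; then $N_{\delta=1}$ (followed by $\epsilon=1$) turns that into the index for $\psi$ with values in $\Psi^{-\infty}_\gamma(G;W)$, which by the stabilization argument following \eqref{IBG.149} is the ordinary index for $\psi$. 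The point is that $A|_{H_{\epsilon=1}}$, restricted further to $H_{\delta=1}$, equals $N_{\delta=1}$ of $A|_{H_{\epsilon=1}}$'s own $\delta=0$ normal operator's $\epsilon=1$ value — i.e. the two compositions agree because both equal the single element $A|_{\epsilon=\delta=1}$ read off from the corner $H_{\epsilon=1}\cap H_{\delta=1}$ of $G[2;\text{2-scl}]$. In other words, commutativity of the index diagrammes is inherited from commutativity of the restriction maps to the boundary hypersurfaces and their common corner, together with the homotopy equivalences \eqref{IBG.157}.

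The main obstacle, and the step deserving the most care, is the compatibility at the corners: one must check that $N_{\delta=0}$ and $N_{\epsilon=1}$ (and likewise $N_{\delta=1}$ as a partially-defined map on the image) are themselves compatible, i.e. that restricting to $H_{\epsilon=1}$ and then taking the $\delta=0$ normal operator coincides with taking $N_{\delta=0}$ first and then restricting its image to $\epsilon=1$ — this is the assertion that the relevant corners of $G[2;\text{2-scl}]$ are the fibre products one expects, e.g. $H_{\epsilon=1}\cap\ff_F$ reproducing the semiclassical front face for $\psi$ with values in $G$, and $\ff_F\cap H_{\delta=1}$ matching \eqref{IBG.161}. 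These are consequences of the commutativity of iterated blow-ups (the two centres in \eqref{IBG.154} being nested as noted there), so the verification is essentially bookkeeping with the blow-up structure rather than analysis; I would do it by examining local coordinates $(x,\frac{y-y'}{\epsilon},\frac{w-w'}{\epsilon\delta},\epsilon,\delta)$ near the corner, where all four normal operators and their restrictions are manifestly the obvious truncations. Granting that, the theorem follows, and I would remark that it also yields the iterated form of the Atiyah-Singer theorem when combined with Theorem~\ref{IBG.137} and Proposition~\ref{IBG.139}.
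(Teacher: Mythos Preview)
Your proposal is correct and follows essentially the same approach as the paper: both introduce the group of invertibles $\cG^{-\infty}_{\operatorname{2-scl}}(G;M(N))$ in the doubly-semiclassical algebra and deduce the result from the commutativity of the large restriction diagram \eqref{IBG.164}, with the homotopy equivalences \eqref{IBG.157} identifying the starting K-groups. You spell out more of the justification (the Neumann-series argument for the homotopy equivalence of $\sigma_{\text{2-scl}}$ and the corner-compatibility bookkeeping) than the paper, which simply presents the diagram and asserts its commutativity, but the architecture is the same.
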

\noindent Note that the index map for $\widetilde{\gamma }$ maps from the
K-theory of the fibre cotangent bundle, which is the fibre product over $F$
of $T^*_{\gamma}G$ and $T^*_{\psi}F;$ this is diffeomorphic to $T^*_{\phi}G.$

\begin{proof} The doubly semiclassical algebra discussed above is set up
  for this purpose. Namely we consider the group of invertible
  perturbations of the identity in this algebra
\begin{equation}
\cG^{-\infty}_{\operatorname{2-scl}}(G;M(N))=\left\{A\in\Psi^{-\infty}_{\text{2-scl}}(G;M(N));
\Id+A\text{ is invertible}\right\} 
\label{IBG.162}\end{equation}
for the operators as in \eqref{IBG.155}. This leads to a commutative
diagramme enlarging \eqref{IBG.136}
\begin{equation}
\xymatrix{
\dcG(T^*_{\gamma}G\times_FT^*_{\psi}F;M(N))\ar@/_5pc/@{-->}[dd]_{\ind_{\gamma\scl}}
&\dcG(\ff_G;M(N))\ar[r]^-{\delta=1 }
\ar[l]_-{\delta =0 }&\dcG(T^*_{\phi}G;M(N))
\ar@/^4pc/@{-->}[ddd]_{\ind_{\phi\scl}}\\
\cG^{-\infty}_{\gamma,\scl}(G\times_\gamma\overline{T^*F};W)\ar[u]_{\sigma _{\scl}}\ar[d]^{\epsilon =1}&
\cG^{-\infty}_{\operatorname{2-scl}}(G;M(N))\ar@{>>}[r]^{N_{\delta=1}}\ar[u]_{\sigma _{\text{2-scl}}}
\ar@{>>}[l]_-{N_{\delta =0}}\ar[d]^-{N_{\epsilon  =1}}&\cG^{-\infty}_{\phi,\scl}(G;W)\ar[dd]^{\delta =\epsilon  =1}
\ar[u]_{\sigma _{\scl}}\\
\dcG(\overline{T^*_{\psi}F};\Psi^{-\infty}_{\gamma}(G;W))
\ar@{-->}[drr]_{\ind_{\psi\scl}}&
\cG^{-\infty}_{\psi,\scl}(F;\Psi^{-\infty}_{\gamma}(G;W))\ar[dr]^{\epsilon =\delta =1}\ar[l]_{\sigma _{\scl}}&\\
&&\cG(Y;\Psi^{-\infty}_{\phi}(G;M(N)))
}
\label{IBG.164}\end{equation}

The commutativity of this diagramme identifies the semiclassical index
map for $\phi,$ which maps from stable homotopy classes on the top right to
the bottom right, with the composite of the two similar maps from top left
to bottom left and from bottom left on to bottom right.
\end{proof}

\begin{proposition}[Thom isomorphism]\label{IBG.141} If $\mu:V\longrightarrow Y$ is a real
  vector bundle over a compact manifold then the semiclassical index
  map for $\mu$ is an isomorphism and a Thom element,
  $P=P_N+q\in\dcP(\overline{V+V'};M(2N))$ with semiclassical index a trivial line
  bundle, generates the inverse to the index map in odd K-theory through 
\begin{multline}
\cG(Y;M(L))\ni a \longmapsto\\
(P_N\otimes a^{-1}+(\Id-P_N)\otimes \Id)(P\otimes a+(\Id-P)\otimes \Id)
\in \dcG(\overline{V+V'};M(2NL)).
\label{IBG.168}\end{multline}
\end{proposition}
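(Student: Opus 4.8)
The plan is to recognise the displayed map as multiplication by a Thom class and to deduce the proposition from three ingredients: the classical Thom isomorphism for the complex vector bundle $T^*_\mu V$, a projection formula for the semiclassical index, and the single analytic computation that the semiclassical index of the Thom (Bott) element is the trivial line bundle.

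First I would record the identifications. Since $\mu:V\longrightarrow Y$ is a vector bundle, its fibrewise cotangent bundle $T^*_\mu V$ is canonically $V\oplus V^*$ over $Y$, and the tautological fibre symplectic form (equivalently the identification with $V\otimes_{\bbR}\bbC$) makes it a complex vector bundle $W\longrightarrow Y$ whose fibrewise radial compactification is the space $\overline{V+V'}$ of \eqref{IBG.168}. The index map for $\mu$ is the map $\ind_{\scl}:K^*(T^*_\mu V)\longrightarrow K^*(Y)$ built from the semiclassical smoothing calculus on the closed-ball-bundle fibres of $V$, i.e. the second (non-compact) extension of Section~\ref{S.Ext}. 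The topological Thom isomorphism asserts that $\Phi:K^*(Y)\longrightarrow K^*(T^*_\mu V)$, $x\longmapsto \mu^*x\cdot\lambda_W$ with $\lambda_W$ the Thom class of $W$, is an isomorphism; a Thom element $P=P_N+q\in\dcP(\overline{V+V'};M(2N))$ as in the statement is, by definition, an idempotent representative of $\lambda_W$ (for instance the Atiyah--Bott--Shapiro idempotent built from fibre Clifford multiplication, with $q$ chosen to vanish rapidly at infinity, which is possible by the usual density of Schwartz representatives), so its existence is immediate.

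Next I would establish a projection formula: $\ind_{\scl}$ is a homomorphism of $K^*(Y)$-modules, so $\ind_{\scl}(\mu^*x\cdot c)=x\cdot\ind_{\scl}(c)$. This is built into the construction, since a semiclassical family representing $c$ may be tensored with a fixed finite-rank family over $Y$ representing $x$ — in the usual clutching form — giving a semiclassical family whose symbol is the corresponding product and whose restriction at $\epsilon=1$ represents $x\cdot\ind_{\scl}(c)$; the same argument covers the mixed even--odd product used below. With the projection formula, the whole proposition reduces to the identity $\ind_{\scl}(\lambda_W)=1\in K^0(Y)$. To see this, quantize the Thom symbol: its semiclassical quantization at $\epsilon=1$ is, up to homotopy, a fibrewise operator whose model on each fibre $V_y\cong\bbR^k$ is the standard Bott operator on $\bbR^k$, a fibre harmonic oscillator with one-dimensional kernel spanned by the fibre Gaussian and vanishing cokernel, so its index is the trivial line bundle spanned by that fibrewise Gaussian. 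This is the familiar analytic proof of Bott periodicity carried out in families; alternatively one may reach it from Proposition~\ref{IBG.140} by reducing to a line bundle and then, locally over $Y$, to the rank-one case over a point. Hence $\ind_{\scl}\circ\Phi(x)=x\cdot\ind_{\scl}(\lambda_W)=x$, and since $\Phi$ is already an isomorphism it follows that $\ind_{\scl}$ and $\Phi$ are mutually inverse; in particular $\ind_{\scl}$ is an isomorphism.

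It remains to identify $\Phi$ with the displayed formula. For $x=[a]$ with $a\in\cG(Y;M(L))$ the product $\mu^*x\cdot[P]=[P]\cdot\mu^*[a]\in K^1(T^*_\mu V)$ is represented by the standard clutching element $(P_N\otimes a^{-1}+(\Id-P_N)\otimes\Id)(P\otimes a+(\Id-P)\otimes\Id)$: each factor is invertible — the first is the inverse of $P\otimes a+(\Id-P)\otimes\Id$ read with $P$ replaced by $P_N$ — and, since $q$ is Schwartz, $P$ tends to $P_N$ at the boundary at infinity of $\overline{V+V'}$, so the product equals $\Id$ there and hence lies in $\dcG(\overline{V+V'};M(2NL))$; homotopies of $a$ and stabilisation pass through, so it descends to K-theory. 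Thus the map of the statement is $x\mapsto\Phi(x)$, which by the previous paragraph is the inverse of $\ind_{\scl}$; the even case is identical with $a$ replaced by an idempotent commuting with $P_N$. I expect the main obstacle to be the index identity $\ind_{\scl}(\lambda_W)=1$: one must make the semiclassical quantization of the Thom symbol concrete enough, using the functional calculus afforded by the calculus of Section~\ref{S.Ext}, to recognise its $\epsilon=1$ restriction as a deformation of the fibrewise Bott operator and read off its index.
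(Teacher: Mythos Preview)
Your argument is correct but takes a different route from the paper. You assume the topological Thom isomorphism $\Phi:K^*(Y)\to K^*(W)$ as a known external input, establish a projection formula for $\ind_{\scl}$, and then the single computation $\ind_{\scl}(\lambda_W)=1$ gives $\ind_{\scl}\circ\Phi=\Id$, whence $\ind_{\scl}=\Phi^{-1}$. The paper instead works internally to the semiclassical calculus and does not invoke the topological Thom isomorphism: it treats the trivial rank-one case by an explicit Bott element and a trace (numerical index) computation, then uses the iteration result Proposition~\ref{IBG.140} to pass to trivial bundles of arbitrary rank, and finally for a general $V$ complements to a trivial bundle so that iteration forces surjectivity, with injectivity supplied by Atiyah's argument \cite{At68}. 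Your approach is shorter and conceptually clean once the topological Thom isomorphism is granted, and your projection-formula step is genuine content (it replaces the paper's appeal to Proposition~\ref{IBG.140}); the paper's approach buys self-containment within the semiclassical framework and avoids importing the very isomorphism whose analytic realisation is the point of the proposition. Both correctly identify \eqref{IBG.168} as the even--odd product with the Thom class, and both rest on the same analytic kernel computation for the Bott element, which you phrase via the fibre harmonic oscillator and the paper phrases as a trace calculation in the rank-one case.
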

\noindent Of course, the only point here is that the Thom
isomorphism from the fibre cotangent bundle, which is $V\oplus V',$ is
realized by the semiclassical index.

\begin{proof} In case $V$ is a trivial 1-dimensional real bundle one can
  construct a Thom, i.e.\ Bott, class explicitly using Pauli matrices and
  the complexified Clifford algebra (cf. \cite{ABS} and with connection \cite{MQ}). That this has a trivial complex line
  bundle as index follows from a trace calculation giving the numerical
  index. That \eqref{IBG.168} gives an inverse to the odd index follows
  directly. Note that the first, normalizing, factor is fibre constant so
  quantizes trivially. Its presence ensures that this is a Schwartz
  perturbation of the identity.

Using (a simple case of) the iteration result above shows that the
semiclassical index map is an isomorphism for any trivial real bundle. For
a general bundle the iteration result applies to show that the composite
with the index map for a complement in a trivial bundle is an isomorphism,
so all such maps are surjective. Injectivity follows by an argument of
Atiyah, \cite{At68}.
\end{proof}

\section{Proof of Theorem~\ref{IBG.137}}

The structure of the proof is that given by Atiyah and Singer, identifying
the index map with the Gysin construction of the push-forward in
K-theory. We follow the odd index.

The fibration, $\phi:F\longrightarrow Y,$ of compact manifolds is first
embedded in a product fibration 
\begin{equation}
\xymatrix{
F\ar@{^(->}[rr]\ar[dr]_{\phi}&&Y\times \bbR^N\ar[dl]^\pi\\
&Y.
}
\label{IBG.169}\end{equation}
The collar neighbourhood theorem gives a smooth identification of an open
neighbourhood of the image of $F$ with the normal bundle, $V.$ The Thom
isomorphism identifies the K-theory of $T^*_{\phi}F$ with that of
$T^*_{\phi}F\times_F(V\oplus V').$ Propositions~\ref{IBG.140} and
\ref{IBG.141} show that the semiclassical index map for $\phi$ is the
product of the inverse of the Thom ismorphism, given by the semiclassical
index, for $V\oplus V'$ pulled back to $T^*_{\phi}F$ and the index map for
$V$ as a bundle over $Y.$ The extension result, Proposition~\ref{IBG.139}
then shows that this factors through the Thom isomorphsim for the trivial
bundle $\bbR^N$ over $Y.$ This identifies the semiclassical index with the
push-forward in K-theory.

\section{The twisted index map}\label{Tw-in-ma}

For a small bundle gerbe consider the `semiclassical space' associated to
$F$ in \eqref{IBG.131}.

\begin{lemma}\label{IBG.19} The lift of the simplicial bundle $J$ to
  $F[2,\scl]$ is trivialized over $\ff.$
\end{lemma}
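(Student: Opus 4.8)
The plan is to use the fact, recalled earlier, that over the fibre diagonal the simplicial bundle $J$ is canonically trivial, together with the observation that the front face $\ff$ of $F[2,\scl]$ in \eqref{IBG.131} is a blow-up of $\Diag_F\times\{0\}$, hence lies over the diagonal. First I would recall that $\Diag_F\subset F^{[2]}$ is the image of the diagonal embedding $F\ni p\mapsto(p,p)$, and that on $\Diag_F$ the simplicial product on $J$ restricts to give a canonical trivialization $J\big|_{\Diag_F}\cong \CC$: indeed, applying the simplicial isomorphism $\sigma$ of \eqref{IBG.27} over the locus $(p,p,p)\in F^{[3]}$ forces $J_{(p,p)}\otimes J_{(p,p)}\cong J_{(p,p)}$ compatibly with the $F^{[4]}$ coherence, so $J_{(p,p)}$ is canonically $\CC$. (This is exactly the local-triviality statement: on each $F_a$ one has $J\cong S_a\otimes S_a^{-1}$, which on the diagonal is canonically trivial independently of the choice of $S_a$.)

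Next I would note that in $F[2,\scl]=[F^{[2]}\times[0,1];\Diag\times\{0\}]$ the blow-down map $\beta$ sends the front face $\ff$ onto $\Diag_F\times\{0\}$, and hence onto $\Diag_F$ under the further projection $F^{[2]}\times[0,1]\to F^{[2]}$. The lift of $J$ to $F[2,\scl]$ is by definition $\beta^*$ of the pull-back of $J$ from $F^{[2]}$, so its restriction to $\ff$ is the pull-back, via $\ff\xrightarrow{\beta}\Diag_F$, of $J\big|_{\Diag_F}$. Since the latter is canonically trivial, so is its pull-back; concretely, a nonvanishing section of $J\big|_{\Diag_F}$ given by the simplicial structure pulls back to a nonvanishing section over all of $\ff$. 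This gives the claimed trivialization.

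The only subtlety — and the step I would be most careful about — is checking that this trivialization is \emph{canonical}, i.e.\ independent of the local trivializing data, so that it is globally well-defined over $\ff$ rather than merely locally; this is what matters for the later identification of the semiclassical symbols of twisted operators with untwisted Schwartz sections on $\overline{T^*_\phi F}$. The point is that while each local isomorphism $J\cong S_a\otimes S_a^{-1}$ depends on $S_a$, the induced trivialization of $J\big|_{\Diag_F}$ does not: the transition $S_b=S_a\otimes S_{ab}$ contributes $S_{ab}\otimes S_{ab}^{-1}=\CC$ canonically on the diagonal. Equivalently, the canonical section is the one determined by $\sigma$ restricted to the `thin' simplices $(p,p,p)$, which is manifestly global. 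Once this is in place, pulling back along $\beta\colon\ff\to\Diag_F$ finishes the argument; nothing further is needed, since $\ff$ maps into the diagonal and carries no new topology relevant to $J$.
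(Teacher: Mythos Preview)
Your proposal is correct and follows essentially the same approach as the paper: use the simplicial product at $(p,p,p)$ to obtain the canonical isomorphism $J_{(p,p)}\otimes J_{(p,p)}\cong J_{(p,p)}$, hence $J\big|_{\Diag_F}\cong\bbC$, and then observe that $\ff$ maps under $\beta$ into $\Diag_F$. The paper's proof records only the first step and leaves the blow-down observation and the canonicality discussion implicit; your version spells these out, which is harmless and arguably clearer, but there is no genuine difference in strategy.
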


\begin{proof} This is a standard consequence of the simplicial property of
  $J.$ Namely, over the diagonal in $F^{[2]}$ the simplicial trivialization
  reduces to an isomorphism
\begin{equation}
J_{(p,p)}\otimes J_{(p,p)}\longrightarrow J_{(p,p)}\Longrightarrow J_{(p,p)}=\bbC.
\label{IBG.20}\end{equation}
\end{proof}

Generalizing \eqref{IBG.132}, the algebra of twisted semiclassical smoothing
operators is defined as a space of kernels 
\begin{equation}
\Psi^{-\infty}_{\scl,J}(F)=\epsilon ^{-d}\{A\in\CI(F[2,\scl];\beta
^*J\otimes\Omega_R);A\equiv0\Mat H_0\}.
\label{IBG.21}\end{equation}

In $\epsilon >0$ the blow-up is irrelevant so these kernels define smooth
sections of $J$ over $F^{[2]}$ depending smoothly on $\epsilon >0.$

\begin{proposition}\label{IBG.22} The composition in $\epsilon >0$
  given by the small Azumaya bundle extends to a smooth algebra structure on
  $\Psi^{-\infty}_{\scl,J}(F).$
\end{proposition}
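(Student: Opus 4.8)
The plan is to reduce the claim to the untwisted case treated in Section~5 by using the local triviality of the gerbe together with Lemma~\ref{IBG.19}. First I would fix a trivializing cover $\{U_a\}$ of $Y$ and, over each $F_a=\phi^{-1}(U_a)$, choose a line bundle $S_a$ with a simplicial isomorphism $J\equiv S_a\otimes S_a^{-1}$ over $F_a^{[2]}$, exactly as in Section~\ref{S.SAB}. This identifies, over $U_a$, the kernels in \eqref{IBG.21} with the ordinary semiclassical smoothing operators $\Psi^{-\infty}_{\phi,\scl}(F_a;S_a)$ of \eqref{IBG.132}: the factor $\epsilon^{-d}$, the vanishing condition at $H_0$, and the smooth structure on $F[2,\scl]$ are all unchanged by tensoring with a (pulled-back, smooth) line bundle, and the composition law \eqref{IBG.9}, which in $\epsilon>0$ is the one coming from the small Azumaya bundle, is transported under this identification to the ordinary fibrewise composition. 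Hence over each $U_a$ the asserted algebra structure is just the standard semiclassical algebra of \eqref{IBG.132}, which is known to be a smooth algebra: the product of two kernels smooth on $F_a[2,\scl]$ and vanishing at $H_0$ is again such a kernel, by the local-coordinate computation recalled after \eqref{IBG.132} (writing kernels as $\epsilon^{-d}A(x,(y-y')/\epsilon)$ and checking that fibre integration preserves smoothness down to $\epsilon=0$ after the blow-up).

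Next I would check that these local algebra structures are independent of the choices and patch. Over $U_a\cap U_b$ one has $S_b=S_a\otimes S_{ab}$ with $S_{ab}$ a line bundle on $U_a\cap U_b$; pulling back to $F^{[2]}$ this induces the conjugation isomorphism $\Psi^{-\infty}_{\phi,\scl}(F_a;S_a)\to\Psi^{-\infty}_{\phi,\scl}(F_b;S_b)$ which is an algebra isomorphism for the semiclassical product (it is conjugation by a unitary section, exactly as in the discussion preceding \eqref{IBG.36}). Since the local products all restrict from the single globally defined composition in $\epsilon>0$ — which is the associative product of the smooth Azumaya bundle $\Psi^{-\infty}_J$ of Proposition~\ref{IBG.29} — associativity and well-definedness are automatic; the only content is that this global product, a priori only defined and smooth for $\epsilon>0$, in fact extends smoothly across $\ff$ and $H_0$ to $F[2,\scl]$. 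But smoothness is a local statement on $Y$, and locally it is precisely the smoothness of the untwisted semiclassical product established above. The key input making the $\ff$ behaviour harmless is Lemma~\ref{IBG.19}: the leading term of the product at the front face is computed from the leading terms of the factors there, and over $\ff$ the bundle $\beta^*J$ is canonically trivial, so the front-face ($\delta_{\scl}$-level) product is literally the untwisted Moyal-type product of \eqref{IBG.176}; no simplicial cocycle enters there.

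I expect the main obstacle to be purely bookkeeping: verifying that the two trivializations of $\beta^*J$ relevant to the composition — the one coming from $J\equiv S_a\otimes S_a^{-1}$ in $\epsilon>0$ and the canonical one over $\ff$ of Lemma~\ref{IBG.19} — are compatible, so that the ``leading symbol'' identification used in \eqref{IBG.134} for the twisted algebra is consistent with the global $\epsilon>0$ product. This amounts to the remark already made in the introduction that the identification of the twisted symbol space with the untwisted one ``does depend on the simplicial product on $J$'': concretely, the restriction of $S_a\otimes S_a^{-1}$ to the diagonal is canonically $\bbC$, and one must see that this canonical trivialization of $J|_{\Diag}$ agrees with \eqref{IBG.20}, which it does because both are induced by the simplicial isomorphism $\sigma$ restricted to $F^{[3]}$ over the triple diagonal. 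Once this compatibility is recorded, the proof is a one-line appeal: locally on $Y$ the twisted semiclassical algebra is isomorphic, as a filtered algebra with the composition \eqref{IBG.9}, to the untwisted $\Psi^{-\infty}_{\phi,\scl}(F;S_a)$, which is a smooth algebra; the isomorphisms on overlaps are algebra isomorphisms; therefore $\Psi^{-\infty}_{\scl,J}(F)$ is a smooth algebra extending the $\epsilon>0$ composition.
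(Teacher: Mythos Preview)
Your proposal is correct and follows essentially the same approach as the paper: both reduce the question locally over a trivializing cover $\{U_a\}$ of $Y$, use the simplicial trivialization $J\cong S_a\otimes S_a^{-1}$ to identify the twisted semiclassical algebra with the standard semiclassical algebra $\Psi^{-\infty}_{\phi,\scl}(F_a;S_a)$ for a line bundle, and then conclude by the locality of smoothness that the product extends globally down to $\epsilon=0$. Your write-up is considerably more detailed than the paper's (which is just a few lines), in particular in tracking the compatibility of the front-face trivialization from Lemma~\ref{IBG.19} with the simplicial one, but this is bookkeeping rather than a genuinely different strategy.
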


\begin{proof} The smoothness of the composition down to
  $\epsilon =0$ follows from the discussion in the proof of
  Proposition~\ref{IBG.29}. Namely, over the $U_a$ the algebra composition
  reduces to that for sections of the bundle $J_a$ and hence the twisted
  semiclassical algebra reduces to the standard semiclassical algebra for
  sections of a line bundle. It follows that the product extends smoothly
  down to $\epsilon =0$ globally.
\end{proof}

The symbol map for this semiclassical $J$-twisted algebra has the same
range as the untwisted algebra, since the symbol is defined from the
leading term of the kernel at the hypersurface $\ff$ where $J$ is
simplicially trivialized. Thus there is a short exact sequence of algebras
\begin{equation}
\xymatrix{
\epsilon \Psi^{-\infty}_{\scl,J}(F;E)\ar[r]& \Psi^{-\infty}_{\scl,J}(F;E)\ar[r]&
\dCI(\overline{T^*_{\fib}F};\hom E).
}
\label{IBG.24}\end{equation}

As in the untwisted case the smooth families
$[0,1]_{\epsilon}\longrightarrow \Psi_{J}(F;E)$ which vanish to infinite
order at $\epsilon =0$ form an ideal in the semiclassical algebra with the
quotient being a $*$-algebra. It follows that a semiclassical operator
$\Id+A$ with invertible symbol is invertible for small $\epsilon >0.$ Since
the parameter $\epsilon $ can be rescaled it follows that the symbol map
from the group of invertibles 
\begin{equation}
\sigma_{\scl,J}:\cG_{\scl,J}(F;E)=\{\Id+A;\text{ invertible }A\in\Psi^{-\infty}_{\scl,J}(F;E)\}
\longrightarrow \cG^{-\infty}(\overline{T^*_{\phi}F};\hom(E))
\label{IBG.170}\end{equation}
is again homotopy equivalence.

Thus, as in \eqref{IBG.136}, the twisted semiclassical
index is defined 
\begin{equation}
\xymatrix{
&\dcG(\overline{T^*_{\phi}F};M(N))\ar[r]& K^1(T^*_{\fib}F)\ar@{-->}[dd]^{\ind_{\scl,J}}\\
\cG^{-\infty}_{\scl,J}(F;M(N))\ar@{->>}[ur]^{\sigma _{\scl}}\ar[dr]_{\epsilon =1}\\
&\cG^{-\infty}_{J}(F;M(N))\ar[r]&K^1(Y;\cA_J).
}
\label{IBG.171}\end{equation}

\begin{theorem}\label{IBG.25} The twisted semiclassical index map defined
  by stabilization of \eqref{IBG.171} is the push-foward to twisted
  K-theory given by the simplicial trivialization of $J$ pulled back to
  $F^{[2]}:$ 
\begin{equation}
\ind_{\scl,J}:K^1(T^*_{\fib}F)\longrightarrow K^1(Y;\cA_J).
\label{IBG.172}\end{equation}
\end{theorem}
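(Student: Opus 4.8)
The plan is to reduce Theorem~\ref{IBG.25} to the untwisted case, Theorem~\ref{IBG.137}, by exactly the same embedding-and-Gysin argument used in Section~8, keeping track of the simplicial line bundle $J$ at every step. The essential observation is that all the machinery of Sections~6 and 7 --- the Extension, Iteration, and Thom isomorphism propositions --- is local over $Y$, and over each trivializing chart $U_a$ the $J$-twisted semiclassical algebra $\Psi^{-\infty}_{\scl,J}(F)$ is isomorphic to the ordinary semiclassical algebra $\Psi^{-\infty}_{\psi,\scl}(F_a;S_a)$ with coefficients in the line bundle $S_a$, by the trivialization $J\equiv S_a\otimes S_a^{-1}$. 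So the twisted index map is built from the same diagrams, now with a line-bundle coefficient that glues, via the $S_{ab}$, into the Azumaya bundle $\cA_J$ rather than into an honest vector bundle.

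First I would set up the twisted analogues of the three propositions of Section~7. The twisted calculus with values in a second fibre bundle $G\longrightarrow F$ is defined exactly as in \eqref{IBG.148}, with kernels taking values in $\beta^*J\otimes \Psi^{-\infty}_{\gamma}(G;W)$; Lemma~\ref{IBG.19} still trivializes $J$ over the front face, so the symbol sequence is unchanged and the index map \eqref{IBG.149} has range $K^1(Y;\cA_J)$. The doubly semiclassical algebra $\Psi^{-\infty}_{\text{2-scl},J}(G;W)$ is defined by tensoring the kernels in \eqref{IBG.155} with (the pull-back of) $J$, which is simplicially trivial on both front faces $\ff_G$ and $\ff_F$; the four normal homomorphisms \eqref{IBG.156} and the restriction maps \eqref{IBG.157} go through verbatim, giving the twisted Iteration statement. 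The twisted Extension proposition is immediate since an embedding $F_1\hookrightarrow F_2$ of fibre bundles over $Y$ does \emph{not} change $Y$ and $J$ lives over $F^{[2]}$; the support argument of Proposition~\ref{IBG.139} applies to the $J$-twisted kernels word for word. For the twisted Thom isomorphism the only subtlety is that the Thom/Bott class lives on the trivial bundle $V$ over $F$ (not over $Y$), where the pulled-back $J$ has no effect on the explicit Pauli-matrix construction, so the Thom element of Proposition~\ref{IBG.141} quantizes in the twisted calculus with the same trivial-line-bundle index; the normalizing factor in \eqref{IBG.168} is fibre-constant and hence unaffected.

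With these twisted versions in hand, I would run the proof of Theorem~\ref{IBG.137} from Section~8 unchanged: embed $\phi:F\longrightarrow Y$ in $Y\times\bbR^N$, identify a collar with the normal bundle $V$, use the twisted Thom isomorphism to pass from $K^1(T^*_{\phi}F;\phi^*\cA_J)$ (which, since $\phi^*\cA_J$ is trivial as an Azumaya bundle, is just $K^1(T^*_\phi F)$) to $K^1(T^*_\phi F\times_F(V\oplus V');\phi^*\cA_J)$, then apply the twisted Iteration proposition to factor $\ind_{\scl,J}$ as the inverse Thom map for $V\oplus V'$ followed by the twisted index map for $V$ as a bundle over $Y$, and finally use twisted Extension to push this through the Thom isomorphism for the trivial bundle $\bbR^N\longrightarrow Y$. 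The endpoint is the composite $K^1(T^*_\phi F)\xrightarrow{\text{Thom}}K^1_c(Y\times\bbR^N)\xrightarrow{\text{Bott}}K^1(Y;\cA_J)$, which is precisely the topological push-forward into twisted K-theory determined by the simplicial trivialization of $J$ on $F^{[2]}$. The even case is identical, using idempotents throughout.

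The main obstacle, and the only place where genuine care is needed beyond bookkeeping, is verifying that the doubly semiclassical $J$-twisted algebra composes smoothly down to the corner $\epsilon=\delta=0$ and that its four normal operators are algebra homomorphisms with the stated ranges --- in other words, the twisted analogue of the (locally verified, globally patched) argument behind Proposition~\ref{IBG.22}. One has to check that the simplicial product on $J$ is compatible with the iterated blow-up in \eqref{IBG.154}, i.e. that over each $U_a$ the twisted $2$-scl algebra reduces to the ordinary $2$-scl algebra of Section~6 with line-bundle coefficient $S_a$, and that the transition data $S_{ab}$ acts consistently on all six boundary hypersurfaces; granting the abstract composition results of \cite{GenProd} this is a routine but somewhat lengthy patching argument, and it is the step I would write out most carefully.
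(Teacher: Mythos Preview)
Your approach has a genuine gap at the Extension step, and it is precisely the obstacle the paper flags in the introduction: the small gerbe does \emph{not} extend from $F$ to the trivial bundle $Y\times\bbR^N$. Concretely, to run the $J$-twisted semiclassical calculus on a fibre bundle $G\longrightarrow Y$ you need a simplicial line bundle on $G^{[2]}_Y$. For $G=V$ the normal bundle you can pull $J$ back along the projection $V\to F$, so the twisted calculus on $V$ makes sense. But for $G=Y\times\bbR^N$ there is no map to $F$, and a simplicial line bundle on $(Y\times\bbR^N)^{[2]}_Y\cong Y\times\bbR^{2N}$ representing the given Dixmier--Douady class would force that class to be trivial on $Y$. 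Your sentence ``an embedding $F_1\hookrightarrow F_2$ \dots\ does not change $Y$ and $J$ lives over $F^{[2]}$'' hides this: the support argument of Proposition~\ref{IBG.139} extends the \emph{kernel}, but it cannot conjure a simplicial $J$ on $F_2^{[2]}$ when none exists. So the final ``push through the Thom isomorphism for the trivial bundle $\bbR^N\to Y$'' step does not go through in the $J$-twisted calculus as you have set it up.

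The paper's remedy is to pass to a different, more flexible quantization before embedding. In Section~10 it introduces the \emph{range-twisted} calculus $\Psi^{-\infty}_{\scl,\psi}(G;\Psi^{-\infty}_J\otimes M(N))$ on an arbitrary fibre bundle $\psi:G\to Y$, where the twisting data is carried by the Azumaya bundle $\cA_J$ on $Y$ pulled back as a coefficient bundle; this makes sense for \emph{any} $G$ over $Y$, in particular for $Y\times\bbR^N$. For this calculus the embedding proof of Theorem~\ref{IBG.137} does apply almost verbatim, giving Theorem~\ref{IBG.106}. The proof of Theorem~\ref{IBG.25} is then completed by Proposition~\ref{IBG.108}, which uses the simplicial isomorphism \eqref{IBG.109} to show that when $G=F$ the range-twisted index map coincides with your $J$-twisted index map \eqref{IBG.171}. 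In short, your local analysis (reducing to line-bundle coefficients over $U_a$) is correct and is what underlies Proposition~\ref{IBG.22} and the well-definedness of the twisted index, but it is not enough to globalize the Extension step; the range-twisted detour is the missing idea.
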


The even index map is defined analogously in terms of idempotents.

Theorem~\ref{IBG.25} is proved by combining the deformation to the more general
`range-twisted' index map, given by Proposition~\ref{IBG.108}, together with the
proof of the latter index theorem in Theorem~\ref{IBG.106}.

\section{Range-twisted quantization}

Although we have defined the twisted index map by quantization through the
bundle of semiclassical twisted smoothing operators it is also possible to
realize the same twisted index map by taking the range of the operators to
be twisted.

For a finite-dimensional fibre bundle $\psi:G\longrightarrow Y$ we can
consider the smoothing operators on the fibres of $G$ with values in the
small Azumaya bundle $\Psi^{-\infty}_J$ over $Y.$ This is geometrically the
same as the first of the extensions to semiclassical quantization discussed
in \S\ref{S.Ext} with (bundles reversed) but simplified in the sense that
the second bundle is pulled back from the base.

Thus \eqref{IBG.21} may be modified to give the kernels 
\begin{multline}
\Psi^{-\infty}_{\scl,\psi}(G;\Psi^{-\infty}_J\otimes M(N))\\
=\epsilon ^{-D}
\{A\in\CI(G[2,\scl]\times_YF^{[2]};J\otimes\Omega_R\otimes
M(N));A\equiv0\Mat\beta^\#(\{\epsilon =0\})\}
\label{IBG.97}\end{multline}
where $D$ is the fibre dimension of $G.$ That this is an algebra follows by
localization over $Y$ where $J$ is simplicially trivialized by a line
bundle over $F.$ The symbol map obtained by evaluating the leading term at the front face of
$G[2,\scl]\times_YF^{[2]}$ followed by fibre Fourier transform
\begin{equation}
\sigma _{\scl,J}:\Psi^{-\infty}_{\scl,\psi}(G;\Psi^{-\infty}_J\otimes M(N))\longrightarrow
\dCI(\overline{T^*_{\psi}}G\times_YF^{[2]};\Psi^{-\infty}_J\otimes M(N))
\label{IBG.102}\end{equation}
now takes values in the twisted smoothing operators. As before it gives the
multiplicative short exact sequence 
\begin{equation}
\xymatrix@C=1.5em{
\epsilon \Psi^{-\infty}_{\scl,\psi}(G;\Psi^{-\infty}_J\otimes M(N))\ar@{^(->}[r]&
\Psi^{-\infty}_{\scl,\psi}(G;\Psi^{-\infty}_J\otimes M(N))\ar[r]^-{\sigma _{\scl}}&
\dCI(\overline{T^*_{\psi}}G\times_YF^{[2]};\Psi^{-\infty}_J\otimes M(N)).
}
\label{IBG.103}\end{equation}

The arguments showing that for the corresponding group of invertible
perturbations of the identity
\begin{equation}
\cG^{-\infty}_{\scl,J}(G;\Psi^{-\infty}_J\otimes M(N))
=\{\Id+A,\ A\in \Psi^{-\infty}_{\scl,\psi}(G;\Psi^{-\infty}_J\otimes
M(N));\text{ invertible}\}
\label{IBG.104}\end{equation}
the symbol map is an homotopy equivalence 
\begin{equation}
\cG^{-\infty}_{\scl,J}(G;\Psi^{-\infty}_J\otimes M(N))\longrightarrow
\dcG(\overline{T^*_{\fib}G};\Psi^{-\infty}_J\otimes M(N))
\label{IBG.177}\end{equation}
carry through essentially unchanged.

Since $\Psi^{-\infty}_J$ is dense in the Azumaya bundle approximation shows
that any odd twisted K-class on $T^*_{\fib}G,$ where the twisting corresponds to
the small gerbe on $F,$ may be represented by such an invertible family 
\begin{equation}
\dcG(\overline{T^*_{\fib}G};\Psi^{-\infty}_J\otimes M(N))\longrightarrow
  K^1(T^*_{\fib}G;\psi^*\cA_J).
\label{IBG.101}\end{equation}
\begin{theorem}\label{IBG.106} The `range-twisted index map' is well
 defined by semiclassical quantization as above from the diagramme
 modifying \eqref{IBG.171}:
\begin{equation}
\ind_{\scl,J}:K^1(T^*_{\fib}G;\psi^*\cA_J)\longrightarrow K^1(Y;\cA_J)
\label{IBG.107}\end{equation}
and coincides with push-forward in twisted K-theory.
\end{theorem}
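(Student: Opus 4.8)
The plan is to transcribe the embedding proof of Theorem~\ref{IBG.137}, carrying the range twisting along throughout. Well-definedness of $\ind_{\scl,J}$ is already in hand: the symbol map \eqref{IBG.177} is a homotopy equivalence, so every class in $K^1(T^*_{\fib}G;\psi^*\cA_J)$ is the symbol of an invertible family in $\cG^{-\infty}_{\scl,J}(G;\Psi^{-\infty}_J\otimes M(N))$, unique up to homotopy and stabilization, and restriction at $\epsilon=1$ lands in $\cG^{-\infty}_J(G;\Psi^{-\infty}_J\otimes M(N))$, which represents $K^1(Y;\cA_J)$ by Lemma~\ref{22.4.2022.2}. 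The structural observation that makes the rest routine is that $\cA_J$, and its dense subalgebra $\Psi^{-\infty}_J$, is pulled back from $Y$: over a cover $\{U_a\}$ of $Y$ on which $J\cong S_a\otimes S_a^{-1}$ it is Morita equivalent to $M(N)$ with transition data the line bundles $S_{ab}$. Hence every step of the semiclassical machinery of \S\ref{S.Ext} --- the blown-up spaces, the fibrewise composition \eqref{IBG.9}, the symbol and normal-operator homomorphisms --- is insensitive to tensoring with $\Psi^{-\infty}_J$: locally over $Y$ it reduces to the untwisted calculus with matrix coefficients, and the local pictures patch by the $S_{ab}$.

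With this, the three structural properties of Propositions~\ref{IBG.139}, \ref{IBG.140} and \ref{IBG.141} are re-established in the range-twisted setting. For Extension: the inclusion of resolved semiclassical spaces \eqref{IBG.167} and the support property are unaffected by a coefficient bundle pulled back from $Y$, so the commutative triangle \eqref{IBG.165} holds with every $K^*(\,\cdot\,)$ replaced by $K^*(\,\cdot\,;(\cdot)^*\cA_J)$. For Iteration: for a double fibration of compact manifolds one forms the doubly-semiclassical algebra exactly as in \eqref{IBG.155}, now with the extra fibre factor $\times_Y F^{[2]}$ and coefficients in $J$; localized over $Y$ the enlarged diagram \eqref{IBG.164} is the untwisted one tensored with $M(N)$, so its commutativity again factors the $\phi$-index as the composite of the $\widetilde\gamma$-index and the range-twisted $\psi$-index. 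For the Thom isomorphism: the Thom/Bott element for a real bundle $V\to Y$ is built locally from Pauli matrices and the complexified Clifford algebra, so tensoring with $\cA_J$ gives a twisted Thom element $P=P_N+q$ whose normalized form \eqref{IBG.168} is a candidate inverse to the range-twisted odd index; the semiclassical index of the normalized element is a line bundle over $Y$ which locally, by the trace computation in the proof of Proposition~\ref{IBG.141}, is trivial, hence globally is precisely the twisted line bundle underlying the twisted Thom isomorphism, and injectivity follows as before by Atiyah's argument applied with $\cA_J$-coefficients.

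The proof of Theorem~\ref{IBG.137} now transcribes. Embed $\psi:G\hookrightarrow Y\times\bbR^N\longrightarrow Y$ as in \eqref{IBG.169} and pull $\cA_J$ back to $Y\times\bbR^N$ along the projection --- unlike the source-twisted situation of \eqref{IBG.171}, here the twisting lives over the base, so it \emph{does} extend over the trivial bundle. A collar identifies a neighbourhood of $G$ with its normal bundle $V$; the twisted Thom isomorphism identifies $K^1(T^*_{\fib}G;\psi^*\cA_J)$ with the (suitably pulled-back) twisted K-theory of $T^*_{\fib}G\times_G(V\oplus V')$, the twisted Iteration and Thom propositions express $\ind_{\scl,J}$ for $\psi$ as the inverse twisted-Thom map for $V\oplus V'$ followed by the range-twisted index for $V\to Y$, and the twisted Extension proposition factors this through the twisted Thom isomorphism for the trivial bundle $\bbR^N$ over $Y$ --- which is by definition the push-forward in twisted K-theory. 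The even case is identical, with idempotents in place of invertibles.

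The genuine work lies in the twisted Thom isomorphism: one must verify that the locally constructed Thom element has the \emph{canonical} twisted index line bundle, i.e.\ that the trivializations of $\cA_J$ over the $U_a$ and the Morita stabilization over $N$ can be chosen coherently, so that the patched semiclassical index of the Thom element is the Thom class defining twisted push-forward, rather than that class twisted by some a priori uncontrolled line bundle on $Y$. Because the twisting is pulled back, the coherence obstruction is a \v Cech class that is killed on each $U_a$, so this collapses to the untwisted computation of Proposition~\ref{IBG.141}; but keeping track of the $S_{ab}$ throughout the Gysin sequence, and checking that the open-inclusion step is compatible with the defining properties of the twisted push-forward, is where the argument must be carried out with care.
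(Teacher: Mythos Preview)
Your proposal is correct and follows essentially the same route as the paper: the paper's proof is the single sentence ``the proof of the semiclassical version of the Atiyah-Singer theorem above has been structured in such a way that it applies almost verbatim to the range-twisted case,'' and you have written out what that verbatim transcription looks like, with the same key observation that $\cA_J$ is pulled back from $Y$ so that locally over $Y$ everything reduces to the untwisted calculus with matrix coefficients. Your final paragraph is more cautious than the paper feels is necessary --- since the Thom element is untwisted and merely tensored with the identity on $\Psi^{-\infty}_J$, the coherence you worry about is automatic and no uncontrolled line bundle appears --- but this is excess care rather than a gap.
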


\begin{proof} Any two quantizations of $\Id+a$ are homotopic through
  invertibles for sufficiently  small $\epsilon >0.$
\end{proof}

If the two fibre bundles in Proposition~\ref{IBG.106} are the same, so
$G=F$ is the total space of the small gerbe defining $\cA_J$ then
$\psi^*\cA_J$ is untwisted as an Azumaya bundle over $G=F.$ In that case
\eqref{IBG.107} and \eqref{IBG.11} are maps on the same spaces.

\begin{proposition}\label{IBG.108} The index maps \eqref{IBG.107}, for
  $G=F,$ and \eqref{IBG.11}, arising from \eqref{IBG.107} are the same when
  the trivially twisted K-theory $K^1(T^*_{\fib}F;\psi^*\cA_J)$ is
  identified with $K^1(T^*_{\fib}F)$ using the symplicial isomorphism 
\begin{equation}
  \pi_{34}^*J\cong \pi_{12}^*J\otimes \pi_{13}^*J^{-1}\otimes \pi_{24}^*J\text{ over }F^{[2]}\times _YF^{[2]}.
\label{IBG.109}\end{equation}
\end{proposition}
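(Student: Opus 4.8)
\emph{The plan} is to trace through the two constructions of the index map and show that the only difference between \eqref{IBG.107} (with $G=F$) and \eqref{IBG.11} is a choice of where the simplicial trivialization of $J$ is applied, and that the isomorphism \eqref{IBG.109} is exactly the bookkeeping device that reconciles the two choices. In \eqref{IBG.11} the twisting lives on the \emph{kernels}: a semiclassical operator is a section of $\beta^*J\otimes\Omega_R$ over $F[2,\scl]$, and the identification $K^1(T^*_{\fib}F;\psi^*\cA_J)\cong K^1(T^*_{\fib}F)$ at the symbolic level comes from Lemma~\ref{IBG.19}, i.e.\ the trivialization of $J$ over the front face $\ff$. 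In \eqref{IBG.107} with $G=F$ the twisting instead lives in the \emph{range}: a kernel is a section over $F[2,\scl]\times_Y F^{[2]}$ of $J\otimes\Omega_R\otimes M(N)$, where the last $F^{[2]}$ factor carries the small Azumaya bundle $\Psi^{-\infty}_J$, and the symbol \eqref{IBG.102} takes values in the genuinely twisted smoothing operators $\Psi^{-\infty}_J\otimes M(N)$. The claim is that composing with the simplicial isomorphism \eqref{IBG.109} over $F^{[2]}\times_Y F^{[2]}$ intertwines these two pictures.

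\emph{The key steps, in order.} First I would spell out the identification \eqref{IBG.109}: label the four factors so that $F^{[2]}\times_Y F^{[2]}$ has coordinates $(p_1,p_2;p_3,p_4)$ all lying over the same point of $Y$; then $\pi_{12}^*J\otimes\pi_{13}^*J^{-1}\otimes\pi_{24}^*J$ is the simplicial composite $J_{(p_1,p_2)}\otimes J_{(p_1,p_3)}^{-1}\otimes J_{(p_2,p_4)}$, and the simplicial isomorphism $\sigma$ of \eqref{IBG.27} collapses this to $J_{(p_3,p_4)}=\pi_{34}^*J$. Second, I would observe that a range-twisted kernel on $F[2,\scl]\times_Y F^{[2]}$ (valued in $J\otimes M(N)$ on the kernel variables, and with $\Psi^{-\infty}_J$ on the range-$F^{[2]}$ variables) can be pulled back to $F[2,\scl]\times_Y F^{[2]}$ and, using \eqref{IBG.109}, rewritten as a kernel valued in $\pi_{34}^*J\otimes M(N)$ alone — i.e.\ purely a kernel-twisted object on a larger semiclassical space; then integrating out (Morita-reducing) the extra $F^{[2]}$ factor returns an element of $\Psi^{-\infty}_{\scl,J}(F;M(N))$ as in \eqref{IBG.21}. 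Third, I would check that this correspondence is compatible with the respective symbol maps: over $\ff$ the bundle $\pi_{34}^*J$ appearing after the rewriting restricts, via Lemma~\ref{IBG.19}, to the same trivial bundle that \eqref{IBG.170} uses, and the diagram identifying $K^1(T^*_{\fib}F;\psi^*\cA_J)$ with $K^1(T^*_{\fib}F)$ built from \eqref{IBG.109} matches that identification. Fourth, the $\epsilon=1$ restriction maps agree by construction — both land in $\cG^{-\infty}_J(F;M(N))$, which represents $K^1(Y;\cA_J)$ by Lemma~\ref{22.4.2022.2} — and since both index maps are the classes of these $\epsilon=1$ restrictions, equality of the quantizations gives equality of the index maps.

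\emph{The main obstacle} I expect is the bookkeeping in the middle step: verifying that the simplicial isomorphism \eqref{IBG.109} genuinely intertwines the \emph{algebra} structures, not merely the underlying bundles of kernels. The range-twisted composition involves the simplicial product on $J$ for the range variables together with fibre integration over $G=F$, while the kernel-twisted composition of \eqref{IBG.21} uses the simplicial product on $J$ for the kernel variables; one must check that, under the rewriting via \eqref{IBG.109}, these two instances of the simplicial product are carried one to the other — which is ultimately the associativity/coherence of $\sigma$ over $F^{[3]}$ and $F^{[4]}$, exactly as in the local reductions used in the proofs of Propositions~\ref{IBG.29} and \ref{IBG.22}. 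Once this coherence is in hand, the remaining comparisons (symbol, $\epsilon=1$ restriction, stabilization) are formal, following the pattern of Proposition~\ref{IBG.140} and the homotopy-invariance argument of Theorem~\ref{IBG.106}. I would localize over a good cover $\{U_a\}$ of $Y$, trivialize $J\cong S_a\otimes S_a^{-1}$ there so both constructions reduce to the untwisted semiclassical calculus with coefficients in $S_a$, check the identity on overlaps using the transition line bundles $S_{ab}$, and then patch.
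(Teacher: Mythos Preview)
Your approach is essentially the same as the paper's: the paper's proof consists of the single assertion that the simplicial isomorphism \eqref{IBG.109} is multiplicative and induces an identification of algebras
\[
\Psi^{-\infty}_{\scl,\phi}(F;\Psi^{-\infty}_J)\;\equiv\;\Psi^{-\infty}_{\scl,J}\bigl(F;\Psi^{-\infty}_{\psi}(F;J^{-1})\bigr)
\]
compatible with the symbol maps, whence the index maps agree; your plan is a detailed unpacking of exactly this, with the multiplicativity check (your ``main obstacle'') and the symbol/$\epsilon=1$ compatibility spelled out.

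The only presentational difference is that the paper keeps the coefficient algebra $\Psi^{-\infty}_{\psi}(F;J^{-1})$ on the right-hand side rather than Morita-reducing it to $M(N)$ as you propose in your second step; this is harmless, since that coefficient bundle is a fixed stabilization and both descriptions give the same class in $K^1(Y;\cA_J)$ by Lemma~\ref{22.4.2022.2}.
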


\begin{proof} The isomorphism \eqref{IBG.109}, arising from the simplicial
  property of $J$ is multiplicative and gives an identification of algebras 
\begin{equation}
\Psi^{-\infty}_{\scl,\phi}(F;\Psi^{-\infty}_J)\equiv \Psi^{-\infty}_{\scl,J}(F;\Psi^{\infty}_{\psi}(F;J^{-1}))
\label{IBG.112}\end{equation}
with their symbol maps. Hence the index maps are the same.
\end{proof}

Thus, as already noted, the proof of Theorem~\ref{IBG.25} is reduced by
Proposition~\ref{IBG.108} to that of Theorem~\ref{IBG.106}.

\begin{proof}[Proof of Theorem~\ref{IBG.106}] The proof of the
  semiclassical version of the Atiyah-Singer theorem above has been
  structured in such a way that it applies almost verbatim to the
  range-twisted case.
\end{proof}

Note that in fact the same argument serves to prove Theorem~\ref{IBG.106}
in the more general case of a smooth Azumaya bundle over the base, $Y,$ of a
fibration. Any integral three class on $Y$ gives such a bundle through the
associated principal $\PU$ bundle.

\section{Chern character}\label{S.OC}

When the K-theory of a manifold is represented in terms of equivalence
classes of complex vector bundles, the Chern character map, to even
cohomology, is given in terms of the curvature of a connection on the
bundles. To pass to the twisted Chern character we first consider the Chern
character for infinite rank bundles as a model for untwisted K-theory.

Suppose $M$ is a connected compact manifold, $\phi:F\longrightarrow M$ is a
smooth fibre bundle with compact fibres and $V$ is a complex vector bundle
over $F.$ If $Z$ is a model for the fibres of $F$ then over each fibre, $V$
is isomorphic to a fixed bundle $W\longrightarrow Z.$ Then $\CI(F;V)$ is a
bundle over $M$ with model fibre $\CI(Z;W).$

Now consider the bundle $\CI(F;V)\oplus\CI(F;V)=\CI(F;V\otimes\bbC^2)$
  and infinite rank subbundles which are determined by idempotents 
\begin{equation}
\Pi=\Pi_-+q,\ q\in\Psi^{-\infty}_{\phi}(F;V\otimes\bbC^2),\ \Pi^2=\Pi,\ \Pi_-=%
\begin{pmatrix}0&0\\
0&\Id
\end{pmatrix}. 
\label{IBG.65}\end{equation}
Then
\begin{equation}
K^0(M)=\{\Pi\text{ as in \eqref{IBG.65}}\}/\text{ smooth homotopy.}
\label{IBG.66}\end{equation}
This follows from the fact that every idemptotent as in \eqref{IBG.65} is
smoothly homotopic to one of the form 
\begin{equation}
\Pi_-+q',\ q'\Pi_-=\Pi_-q'=%
\begin{pmatrix}0&0\\0&q_e-q_-'
\end{pmatrix},\
q'(\Id-\Pi_-)=%
\begin{pmatrix}q'_+&0\\0&0
\end{pmatrix}
\label{IBG.67}\end{equation}
where $q'_\pm\in\Psi^{-\infty}_\phi(F;V)$ are finite rank idempotents and
$q_-'q_e=q_e=q_eq_-'$ where $q_e$ is a finite rank projection onto the range of a
trivial subbundle $e:\CI(M;\bbC^N)\longrightarrow \CI(F;V).$ Then $\Pi$ is
mapped to the K-class 
\begin{equation}
[\Pi]=\Ran(q_+)\ominus\Ran(q_e-q'_-)\longmapsto  K^0(M).
\label{IBG.68}\end{equation}

To give the Chern character directly in terms of such idempotents we
proceed to  {describe the curvature form} of a connection on the range
of $\Pi.$

First a connection on $\CI(F;V)$ as a bundle over $M$ can be obtained by
combining a connection, $\nabla^V,$ on $V$ over $F$ with an Ehresmann
connection on $F$ as a fibre bundle over $M.$ The Ehresmann connection is
specified by a smooth linear lifting map
\begin{equation}
\CI(M;TM)\ni u\longrightarrow
\tau(u)\in\CI(F;TF),\ \phi_*\tau(u)(p)=u(\phi(p))\ \forall\ q\in F.
\label{IBG.69}\end{equation}
Then the induced connection is 
\begin{equation}
\nabla:\CI(F;V)\longrightarrow \CI(M;T^*M)\otimes_{\CI(M)}\CI(F;V),\
\nabla_u v=\nabla^V_{\tau(u)}v.
\label{IBG.70}\end{equation}

As in the finite-dimensional case this extends uniquely to an extended
connection 
\begin{multline}
\widetilde\nabla:\CI(M;\Lambda ^*)\otimes_{\CI(M)}\CI(M;V)\longrightarrow
\CI(M;\Lambda ^{*+1})\otimes_{\CI(M)}\CI(M;V),\\
\widetilde\nabla(\alpha
\wedge w)=d\alpha \wedge w+(-1)^k\alpha \wedge \widetilde\nabla w,\ \alpha
\in\CI(M;\Lambda ^k).
\label{IBG.71}\end{multline}
Written in terms of a local trivialization of $F,$ and reduction of $V$ to
$W$ locally over $M,$ this operator is of the form 
\begin{equation}
\nabla=d_M+\gamma
\label{IBG.77}\end{equation}
where $\gamma$ is a first-order fibre differential operator.

Now the curvature of $\nabla$ is  
\begin{equation}
R=\widetilde\nabla^2:\CI(M;\Lambda ^*)\otimes_{\CI(M)}\CI(M;V)\longrightarrow \CI(M;\Lambda ^{*+2})\otimes_{\CI(M)}\CI(F;V).
\label{IBG.72}\end{equation}
As in the finite-dimensional case, $R$ commutes with the action of
$\CI(M;\Lambda ^*)$ and so is given by a fibre-wise homomorphism. This is a
2-form on $M$ with values in the (first-order) differential operators on the
fibres of $F.$ From the standard formula 
\begin{equation}
R(v,w)=\nabla_v^V\nabla_w^V-\nabla_w^V\nabla_v^V-\nabla_{[v,w]}^V
\label{IBG.94}\end{equation}
and \eqref{IBG.70} it follows that 
\begin{equation}
R(v,w)=\nabla ^V_{\mu(v,w)}+\beta (\tau (v),\tau (w))
\label{IBG.95}\end{equation}
where $\mu$ is the curvature of the Ehresmann connection and $\beta$ is the
curvature of $\nabla ^V.$

Again as in the finite-dimensional case the connection on $\CI(F;V)$
induces a connection on homomorphisms. In this case for instance the
fibrewise pseudodifferential operators act 
\begin{equation}
\Psi_{\phi}^k(F;V):\CI(M;\Lambda ^*)\otimes_{\CI(M)}\CI(F;V)\longrightarrow \CI(M;\Lambda ^*)\otimes_{\CI(M)}\CI(F;V)
\label{IBG.73}\end{equation}
and then 
\begin{equation}
(\widetilde\nabla E)w=\widetilde\nabla(Ew)-E(\widetilde\nabla w)
\label{IBG.74}\end{equation}
defines a multliplicative connection on $\Psi_{\phi}^k(F;V).$ In terms of a trivialization
of $F$ in which $\CI(F,V)$ is locally trivialized to the bundle $\CI(Z;W)$
over open sets of $M$ this connection is 
\begin{equation}
\nabla =d_M+[\gamma ,\cdot]
\label{IBG.76}\end{equation}
in terms of \eqref{IBG.77}. 

Then, as a fibrewise differential operator, $R\in \CI(M;\Lambda
^2)\otimes_{\CI(M)}\Psi_{\phi}^2(F;V)$ and with respect to the extended
connection the Bianci identity takes the form 
\begin{equation}
\widetilde \nabla R=0.
\label{IBG.75}\end{equation}

The connection $\nabla$ restricts to a connection on the range of the
reference bundle $\Pi_-$ in \eqref{IBG.65} and similarly on the bundle
$\CI(F;V\otimes \bbC^2);$ we will continue to denote either connection by
$\nabla.$ Since it is a fibrewise operator $\Pi$ extends naturally to act on
the bundles $\CI(M;\Lambda ^*)\otimes_{\CI(M)}\CI(F;V).$ Then $\nabla$
induces a connection on the range of $\Pi$ by
\begin{equation}
\Pi w=w,\ \Mie w\in\Ran(\Pi) \Longrightarrow
\nabla^{\Pi}w=\Pi(\nabla w)\in\CI(M;\Lambda ^*)\otimes_{\CI(M)}\Ran(\Pi).
\label{IBG.78}\end{equation}
In a local trivialization of $F,$ which will not in general trivialize
$\Pi,$ this connection differs from the connection on the range of $\Pi_-$
by a fibrewise smoothing operator
\begin{equation}
d_M+\gamma ^{\Pi},\ \gamma ^{\Pi}=\gamma ^-+\mu,%
\gamma ^-=\begin{pmatrix}0&0\\0&\gamma 
\end{pmatrix},\ \mu\in\CI(M;\Lambda ^1)\otimes_{\CI(M)}\Psi^{-\infty}_{\phi}(F;V\otimes\bbC^2).
\label{IBG.79}\end{equation}
Similarly the difference $R^{\Pi}-R^-$ of $R^{\Pi}=(\widetilde\nabla^{\Pi})^2$ and the
curvature $R^-$ of the reference operator $\Pi_-$ is a 2-form on $M$ with values in the
fibrewise smoothing operators.

\begin{proposition}\label{IBG.80} In the model \eqref{IBG.66} for $K^0(M)$
  the Chern character is given by 
\begin{equation}
\Ch(\Pi)=\sum\limits_{k}\frac{1}{(2\pi
  i)^kk!}\Tr\left((R^{\Pi})^k-(R^-)^k\right),\ [\Ch(\Pi)]\in H^{\ev}_{\dR}(M).
\label{IBG.81}\end{equation}
\end{proposition}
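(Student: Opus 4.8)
The plan is to verify that the right-hand side of \eqref{IBG.81} is a closed form whose de Rham class is independent of all the choices (the Ehresmann connection on $F$, the connection $\nabla^V$, and the choice of representative idempotent $\Pi$ in its smooth homotopy class), and then to identify that class with the usual Chern character via the finite-rank reduction \eqref{IBG.67}. The key point throughout is that although $R^\Pi$ and $R^-$ are each only \emph{differential}-operator valued 2-forms (so their traces individually make no sense), the difference $R^\Pi - R^-$, and more generally $(R^\Pi)^k - (R^-)^k$ expanded by telescoping, lies in $\CI(M;\Lambda^{2k})\otimes_{\CI(M)}\Psi^{-\infty}_\phi(F;V\otimes\bbC^2)$, as observed just before the statement; hence the fibrewise trace $\Tr$ (integration of the Schwartz kernel over the fibre diagonal) is defined and $\CI(M)$-linear.

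\emph{Closedness.} First I would record the graded trace property: for $A\in\CI(M;\Lambda^*)\otimes\Psi^{-\infty}_\phi$ and $B\in\CI(M;\Lambda^*)\otimes\Psi^k_\phi$ one has $\Tr([B,A]) = 0$ whenever the commutator is again smoothing, and $d_M\Tr(A) = \Tr(\widetilde\nabla A)$ for the multiplicative connection \eqref{IBG.76}, because in a local trivialization $\widetilde\nabla A = d_M A + [\gamma,A]$ and the bracket term is killed by the fibre trace. Combining these with the Bianchi identities $\widetilde\nabla R^\Pi = 0 = \widetilde\nabla R^-$ (equation \eqref{IBG.75} applied to each connection) gives $d_M\Tr\big((R^\Pi)^k - (R^-)^k\big) = \Tr\big(\widetilde\nabla[(R^\Pi)^k - (R^-)^k]\big) = 0$ term by term, so $\Ch(\Pi)$ is closed.

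\emph{Independence of choices and normalization.} For two connections $\nabla_0,\nabla_1$ (arising from different Ehresmann and/or $\nabla^V$ data, but on the \emph{same} $\Pi$) the standard Chern--Simons transgression argument applies verbatim once one checks that every operator produced along the path $\nabla_t = \nabla_0 + t(\nabla_1-\nabla_0)$ that has to be traced is smoothing: indeed $\nabla_1-\nabla_0$ restricted to $\Ran\Pi$ differs from its restriction to $\Ran\Pi_-$ by a smoothing operator (same mechanism as \eqref{IBG.79}), and the transgression integrand $\Tr\big(\dot\nabla_t\wedge((R_t^\Pi)^{k-1} - (R_t^-)^{k-1}\,\text{-corrected terms})\big)$ is therefore fibrewise-smoothing-valued and its $d_M$ equals the difference of the two Chern characters. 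For homotopy invariance in $\Pi$ itself, I would use the smooth homotopy connecting $\Pi$ to the reduced form \eqref{IBG.67}, treating the homotopy parameter as an extra base variable and applying the same transgression; on the reduced form \eqref{IBG.68} the connection $\nabla^\Pi$ splits as a finite-rank connection on $\Ran(q_+)$ direct sum a finite-rank connection on $\Ran(q_e - q'_-)$ direct sum a piece equal to the reference, so $(R^\Pi)^k - (R^-)^k$ reduces to $(R^{q_+})^k - (R^{q_e-q'_-})^k$ with all operators finite rank, and \eqref{IBG.81} collapses to $\Ch(\Ran q_+) - \Ch(\Ran(q_e-q'_-))$ in the classical normalization, matching \eqref{IBG.68}.

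\emph{Main obstacle.} The one genuinely delicate point — and the step I would write most carefully — is the bookkeeping that guarantees \emph{every} trace appearing (in the closedness computation, in the two transgressions, and in the reduction) is legitimately a fibrewise trace of a smoothing operator rather than an ill-defined trace of a differential operator. This rests on the elementary but essential algebraic fact that $\Psi^{-\infty}_\phi$ is a two-sided ideal in $\Psi^*_\phi$, so that $R^\Pi - R^- \in \Lambda^2\otimes\Psi^{-\infty}_\phi$ forces, by the telescoping identity $X^k - Y^k = \sum_{j} X^j(X-Y)Y^{k-1-j}$ applied with $X = R^\Pi$, $Y = R^-$, that $(R^\Pi)^k - (R^-)^k$ and likewise each transgression integrand lands in the ideal. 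Once that is isolated as a lemma, the rest is the standard Chern--Weil/Chern--Simons machinery carried out in the algebra $\CI(M;\Lambda^*)\otimes_{\CI(M)}\big(\Psi^*_\phi(F;V\otimes\bbC^2)\big)$ with its graded trace, and the proposition follows.
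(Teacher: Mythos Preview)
Your approach is essentially the paper's: well-definedness of the trace via the smoothing ideal and telescoping, closedness via Bianchi plus the trace identity, homotopy invariance by working over $[0,1]\times M$, and identification with the ordinary Chern character through the finite-rank reduction \eqref{IBG.67}--\eqref{IBG.68}. One point to tighten: in your closedness step you invoke a single $\widetilde\nabla$ both for the identity $d_M\Tr=\Tr\,\widetilde\nabla$ and for ``$\widetilde\nabla R^\Pi=0$'', but Bianchi holds for $R^\Pi$ only with respect to $\widetilde\nabla^\Pi$, not the reference $\widetilde\nabla$; the paper handles exactly this by splitting $\widetilde\nabla^\Pi=\widetilde\nabla+[\mu,\cdot]$ and killing the extra commutator $\Tr([\mu,(R^\Pi)^k])$ separately (your ``main obstacle'' paragraph already anticipates this bookkeeping).
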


\begin{proof} Since $R^{\Pi}-R^-$ is smoothing, so is $(R^{\Pi})^k-(R^-)^k$
  and hence \eqref{IBG.81} defines a form in $\CI(M;\Lambda ^{\ev}).$ That
  this form is closed follows essentially as in the finite-dimensional
  case. Thus, in a local trivialization of $F$ and separately for each
  term,
\begin{equation}
d_M\Tr\left((R^{\Pi})^k-(R^-)^k\right)=\Tr\left(d_M(R^{\Pi})^k-d_M(R^-)^k\right).
\label{IBG.82}\end{equation}
From the trace identity it follows that 
\begin{equation}
\Tr\left([\gamma ^-,(R^{\Pi})^k-(R^-)^k)]\right)=0.
\label{IBG.83}\end{equation}
Then, since $\mu$ is a smoothing operator 
\begin{equation}
\Tr\left([\mu ,(R^{\Pi})^k]\right)=0.
\label{IBG.84}\end{equation}
Combining these shows that 
\begin{equation}
d_M\Tr\left((R^{\Pi})^k-(R^-)^k\right)=\Tr\left(\tilde\nabla^{\Pi}(R^{\Pi})^k-\tilde\nabla(R^-)^k\right)=0
\label{IBG.85}\end{equation}
from the Bianchi identity.

A similar argument, as in the finite-dimensional case, over $[0,1]_t\times
M,$ shows that the deRham class of $\Ch(\Pi)$ is constant under smooth
homotopy so $[\Ch(\Pi)]\in H^{\ev}(M)$ is well-defined, including
independence of the choices of $\nabla^V$ and the Ehresmann connection on $F.$

That this is indeed the Chern character follows from the reduction
\eqref{IBG.68}.
\end{proof}

\section{Odd Chern character}\label{OddCC}
Since it is not quite as familiar as the even Chern character we derive the
odd version by suspension. For a manifold $M $ 
\begin{equation}
K^0(\bbS\times M)=K^0(M)\oplus K^1(M).
\label{IBG.46}\end{equation}
Any odd K-class on $M$ can be constructed, in terms of \eqref{IBG.46}, from
a vector bundle $V$ over $M$ and an invertible section 
\begin{equation}
g\in {\End}(V).
\label{IBG.47}\end{equation}

Then a bundle $U $ over $\bbS\times M$ can be constructed in terms of its
global space of sections by considering those smooth sections of the pull-back of
$V$ to $[0,2\pi+\epsilon)\times M$  
\begin{equation}
u:[0,2\pi+\epsilon)\times M\longrightarrow V\Mst
u(2\pi+s,m)=g(m)u(s,m),\ s\in[0,\epsilon)
\label{IBG.48}\end{equation}
where $\epsilon >0$ is suitably small, e.g. $\epsilon =1/6.$ This is a
module over $\CI(\bbS\times M)$ and has a local basis so does indeed define
a bundle $U\longrightarrow \bbS\times M.$

If $\nabla^V$ is a connection on $V$ then a connection on $U$ can be
constructed using a cut-off $\chi\in\CIc([0,2\pi+\epsilon]$ with
$\chi(s)=0$ in $s<\epsilon$ and $\chi(s)=1$ in $s>2\epsilon.$  Applied to a
global section as in 
\begin{equation}
\nabla^Uu=ds\cdot \pa_su+(\nabla^V-\chi (s)(\nabla^V g)g^{-1})u
\label{IBG.49}\end{equation}
where $\nabla^Vg$ is the action of the induced connection on $\hom(V).$

Extending the connection as usual to $\tilde\nabla_U$ acting on $\Lambda
^*\otimes U,$ the curvature of this connection is 
\begin{equation}
R_U=\tilde\nabla_U^2=-\chi '(s)ds\wedge (\nabla_Vg)g^{-1}+R_V-\chi(s)\left(
[R_V,g]g^{-1}-(\nabla_Vg)g^{-1}\wedge(\nabla_Vg)g^{-1}\right)
\label{IBG.50}\end{equation}
where the curvature of the connection on $\hom(V)$ is the commutator action
of the curvature of $V.$

The Chern character of $U$ as a form on $\bbS\times M$ is therefore 
\begin{equation}
\sum\limits_{k\ge0}\frac{1}{(2\pi i)^kk!}\tr(R_U^k).
\label{IBG.51}\end{equation}
Pushing this forward to $M$ gives the odd Chern character of the class
defined by $g$ and $V:$
\begin{multline}
\Ch_{\odd}(g,V)=\sum\limits_{k\ge1}\int_{0}^{2\pi}\frac1{(2\pi i)^k(k-1)!}
\tr(-\chi '(s)ds\wedge (\nabla_Vg)g^{-1}\\
\left(R_V-\chi(s)(
[R_V,g]g^{-1}-(\nabla_Vg)g^{-1}\wedge(\nabla_Vg)g^{-1})\right)^{k-1}.
\label{IBG.52}\end{multline}
Here we have extracted the coefficient of $ds.$ 
Expanding out the second factor to a part with and a part without $\chi$ gives
multiples of 
\begin{equation}
\int_{0}^{2\pi}\chi(s)\chi^p(s)=\frac1{p+1}=\int_0^1s^p
\label{IBG.53}\end{equation}
so the integral is independent of the choice of $\chi$ and can be written 
\begin{multline}
\Ch_{\odd}(g,V)=\sum\limits_{k\ge1}\frac1{(2\pi i)^k(k-1)!}\int_{0}^{1}ds
\tr(-(g^{-1}\nabla^Vg)\\
\left((1-s)g^{-1}R_Vg-sR_V-(g^{-1}\nabla^Vg)\wedge (g^{-1}\nabla^Vg))\right)^{k-1}.
\label{IBG.54}\end{multline}

By construction this form is closed and the multliplicativity of the even
Chern character implies that it is multiplicative under  
\begin{equation}
\begin{gathered}
K^0(M)\times K^1(M)\ni([U],[(g,V)))\longmapsto ([(g\otimes\Id_U,V\otimes
    U)))\in K^1(M),\\
\Ch_{\odd}((g\otimes\Id_U,V\otimes
    U))=\Ch(U)\wedge\Ch_{\odd}(g,V)).
\end{gathered}
\label{IBG.55}\end{equation}

We pass from this finite-dimensional formula to the Chern character for
the classifying bundle  
\begin{equation}
G^{-\infty}_{\phi}(F;L)\longrightarrow Y
\label{IBG.56}\end{equation}
for any smooth fibre bundle $F\longrightarrow Y$ and vector bundle $L$ over
$F.$

Just as \eqref{IBG.54} gives a representative deRham class for any
connection on $V,$ we choose an Ehresmann connection on $F$ and a connection
on $L$ over $F.$ Then any section of \eqref{IBG.56} is smoothly homotopic
to a finite rank perturbation of the identity onto a smooth
subbundle. Applying \eqref{IBG.54} and taking the limit as the rank goes to
infinity shows that \eqref{IBG.54} extends to give essentially the same
formula
\begin{multline}
\Ch_{\odd}(g)=\sum\limits_{k\ge1}\frac1{(2\pi i)^k(k-1)!}\int_{0}^{1}ds
\tr(-(g^{-1}\nabla g)\\
\left((1-s)g^{-1}Rg-sR-(g^{-1}\nabla g)\wedge (g^{-1}\nabla g))\right)^{k-1}\\
g\in\CI(Y;G^{-\infty}_{\phi}(F;L)
\label{IBG.57}\end{multline}
where now the curvature and connection are those induced on the bundle
$\Psi^{-\infty}_{\phi}(F;L)$ from the connection on $\CI(F;L)$ by the
Ehresmann connection on $F$ and a choice of connection on $L.$
This connection on $\Psi^{-\infty}_{\phi}(F;L)$ is multiplicative. 

\section{Twisted Chern character for small bundle gerbes}
 
Now we pass to the twisted case for a small gerbe and replace
$\Psi^{-\infty}_{\phi}(F;L)$ by the bundle of algebras over $M$ which as a
space (altough not as a bundle of algebras) is
\begin{equation}
\Psi^{-\infty}_{\phi,J}(F)=\Psi^{-\infty}_{\phi}(F)\otimes_{\CI(F^{[2]})}\CI(F^{[2]};J).
\label{IBG.58}\end{equation}
Using an Ehresmann connection on $F,$ and hence on $F^{[2]},$ we may
use \eqref{IBG.58} to induce a multiplicative connection on the twisted
bundle of smoothing algebras. Namely, take a simplicial connection on $J$
over $F^{[2]},$ so consistent with the simplicial trivialization 
\begin{equation}
\pi_{12}^*J\otimes\pi_{23}^*J\simeq \pi_{13}^*J\text{ over }F^{[3]}.
\label{IBG.59}\end{equation}
The consistency of these connections over the multiplicative action of
$\CI(F)$ gives a connection via the tensor product \eqref{IBG.58}.

\begin{proposition}\label{IBG.60} The connection, $\nabla^J$ on
  $\Psi^{-\infty}_{\phi,J}(F)$ induced by an Ehresmann connection on $F,$
  and a simplicial connection on $J$ is multiplicative and has curvature
  \emph{with a global representation,} determined by a choice of a B-field
  for $J,$ as a twisted fibre pseudodifferential operator
\begin{equation}
[R_J,\cdot],\ R_J\in\CI(M;\Lambda ^2)\otimes_{\CI(M)}\Psi^{1}_{\phi,J}(F);
\label{IBG.61}\end{equation}
the Bianchi identity for which becomes 
\begin{equation}
\tilde\nabla^J R_J=T\in\CI(M;\Lambda ^3),\ dT=0
\label{IBG.86}\end{equation}
with $T/2\pi i$ a deRham representative of the Dixmier-Douady class of the gerbe.
\end{proposition}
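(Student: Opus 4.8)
The plan is to reduce the statement to the untwisted curvature computation of \eqref{IBG.70}--\eqref{IBG.95} over a good cover trivializing the gerbe, and then to globalize using a curving (B-field) for $J$; the Bianchi identity and the identification of $T$ then follow formally.

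\textbf{Step 1 (multiplicativity).} Over a member $U_a$ of a good cover of $M$ trivializing the gerbe, I would fix an isomorphism of simplicial line bundles $J\cong \pi_1^*S_a\otimes\pi_2^*S_a^{-1}$ over $(\phi^{[2]})^{-1}(U_a)$. The difference between the simplicial connection on $J$ and the connection on $\pi_1^*S_a\otimes\pi_2^*S_a^{-1}$ induced by a connection $\nabla^{S_a}$ on $S_a$ is a $1$-form on this fibre product annihilated by the fibre-product (simplicial) differential --- this is exactly the compatibility of both connections with $\sigma$ in \eqref{IBG.59} --- hence, by exactness of the fibre-product complex of a surjective submersion, it equals $\pi_1^*\lambda_a-\pi_2^*\lambda_a$ for some $\lambda_a\in\CI(F_a;\Lambda^1)$, which I absorb into $\nabla^{S_a}$. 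With this choice the tensor-product construction \eqref{IBG.58} identifies $\Psi^{-\infty}_{\phi,J}(F)$ over $U_a$ with $\Psi^{-\infty}_{\phi}(F_a;S_a)$ and $\nabla^J$ with the commutator connection $d_M+[\gamma_a,\cdot]$ of \eqref{IBG.76}. Since the latter is manifestly a derivation for the composition and being a derivation is a local condition, $\nabla^J$ is multiplicative.

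\textbf{Step 2 (local curvature).} In the same trivialization the curvature of $\nabla^J$ is the inner derivation $[R^{S_a},\cdot]$, where by \eqref{IBG.95} with $V=S_a$
$$R^{S_a}(v,w)=\nabla^{S_a}_{\mu(v,w)}+F_{S_a}(\tau(v),\tau(w))\,\Id$$
is a $2$-form on $U_a$ valued in the first-order fibre differential operators on $F_a$ twisted by $S_a$; here $\mu$ is the Ehresmann curvature and $F_{S_a}$ the curvature of $\nabla^{S_a}$. On $U_a\cap U_b$ one has $S_b=S_a\otimes\phi^*S_{ab}$ with $S_{ab}$ a line bundle over $U_a\cap U_b$; since $\mu(v,w)$ is vertical and the connection form of $\phi^*S_{ab}$ is horizontal, a short computation gives $R^{S_b}=R^{S_a}+F_{S_{ab}}\,\Id$ as $2$-forms on $U_a\cap U_b$. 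Thus the curvature of $\nabla^J$ is a globally defined inner derivation $[R_J,\cdot]$, but the operators $R^{S_a}$ themselves fail to patch, by the scalar $2$-form $F_{S_{ab}}$.

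\textbf{Step 3 (globalization by a B-field), the main point.} A B-field for $J$ is a $2$-form $B\in\CI(F;\Lambda^2)$ with $\pi_1^*B-\pi_2^*B$ equal to the curvature of the simplicial connection on $J$; such a $B$ exists by the simplicial property together with exactness of the fibre-product complex. Restricting to $F_a^{[2]}$ and using Step 1, $B|_{F_a}-F_{S_a}=\phi^*c_a$ for a unique $c_a\in\CI(U_a;\Lambda^2)$, and comparing on overlaps gives $c_a-c_b=F_{S_{ab}}$. Setting $R_J|_{U_a}=R^{S_a}+c_a\,\Id$, the two overlap relations cancel, so $R_J$ is a well-defined global element of $\CI(M;\Lambda^2)\otimes_{\CI(M)}\Psi^{1}_{\phi,J}(F)$; the added term $c_a\,\Id$ is of order $0$ and central, hence leaves both the curvature $[R_J,\cdot]$ and multiplicativity unchanged. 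This is the step requiring care: one must track that the failure $R^{S_b}-R^{S_a}=F_{S_{ab}}\,\Id$ of the local representatives to patch is cancelled exactly by the base $2$-forms $c_a-c_b=F_{S_{ab}}$ produced by the chosen curving.

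\textbf{Step 4 (Bianchi identity and Dixmier--Douady class).} Over $U_a$, $\tilde\nabla^J R_J=\tilde\nabla^{S_a}R^{S_a}+\tilde\nabla^J(c_a\,\Id)$; the first term vanishes by the Bianchi identity \eqref{IBG.75}, and since the identity operator is flat and $c_a$ is pulled back from the base, the second equals $(dc_a)\,\Id$. Since $\pi_1^*(dB)-\pi_2^*(dB)=d(\pi_1^*B-\pi_2^*B)$ is the exterior derivative of the curvature of $J$, hence zero by that bundle's Bianchi identity, exactness gives $dB=\phi^*T$ for a unique $T\in\CI(M;\Lambda^3)$, which is closed because $\phi^*$ is injective and $\phi^*dT=d\,dB=0$; moreover $dc_a=T|_{U_a}$. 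Identifying $\phi^*T\,\Id$ with $T$ yields $\tilde\nabla^J R_J=T$ with $dT=0$. Finally, the transition line bundles $S_{ab}$, together with the trivialization of $S_{ab}\otimes S_{bc}\otimes S_{ca}$ coming from $\sigma$, form a \v Cech representative of the Dixmier--Douady class of the gerbe, exactly as in the proof of Proposition~\ref{IBG.29}; the standard Weil-type comparison using the $\nabla^{S_a}$, the $c_a$ and $B$ then identifies $T/2\pi i$ with the image of this class in $H^3(M;\bbR)$, as asserted.
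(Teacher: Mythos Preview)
Your proposal is correct and follows essentially the same approach as the paper's proof: both work in a local simplicial trivialization of $J$ to reduce to the untwisted curvature formula \eqref{IBG.95}, observe that the local representatives $R^{S_a}$ fail to patch by a \v Cech $2$-form on the base, and then correct by a B-field to obtain the global $R_J$, with the Bianchi identity producing the closed $3$-form $T$ as the differential of this \v Cech cocycle. Your version is considerably more explicit than the paper's rather terse argument, in particular spelling out the overlap computation $R^{S_b}=R^{S_a}+F_{S_{ab}}\Id$ and the exact cancellation $c_a-c_b=F_{S_{ab}}$, which the paper only gestures at.
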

\noindent Note that the operator $R_J$ is actually the sum of a first-order
twisted differential operator and a twisted smoothing operator.

The curvature of a connection on a bundle such as
$\Psi^{-\infty}_{\phi,J}(F)$ is, in principle a 2-form with values in the
bundle of homomorphisms. Since the connection is mutltiplicative and the
algebra is simple it is natural to expect that the curvature is of the form
\eqref{IBG.61}. However, such a representation is not unique since adding a
2-form on $M,$ acting as a multiple of the identity on the algebra leaves
it unchanged. Thus $R_J$ amounts to a simplicial lift of the curvature.

\begin{proof} In a local trivialization of $F$ and simplicial reduction of
  $J,$ the derivation of \eqref{IBG.95} carries over to the twisted
  case. The first term is the fibre differential operator given by the
  evaluation of the Ehresmann curvature on on connection on $S.$ Acting on
  the left or right this is a well-defined first order differential
  operator in $\Psi^{1}_{\phi,J}(F).$ The second term in \eqref{IBG.95} is
  the curvature of $J$ and here the lift to a commutator depends on the
  simplicial trivialization of $J.$ To ensure a globally defined lift a
  B-field, a 2-form $B$ on $F$ which is a simplicial lift of the curvature
  of $J,$ can be chosen. The resulting splitting of the second term in
  \eqref{IBG.95} as a commutator then differs from local version given by
  the connection on $S\otimes S'$ by the pull-back a \v Cech $2$-form on
  the base.

The Bianchi identity \eqref{IBG.75} holds for the lift of the curvature to
a commutator as given by local simplicial trivialization but the global
version given by the B-field differs from it by the differential of the \v
Cech $2$-form. This however is a global closed $3$-form on $M$ which gives \eqref{IBG.86}.
\end{proof}

In this twisted setting the even twisted K-theory of $M$ can be
represented by idempotents $\Pi$  
\begin{equation}
\Pi=\Pi_-+q,\ \Pi^2=\Pi,\ q\in\Psi^{-\infty}_{\phi,J}(F;\bbC^2),\ \Pi_-=%
\begin{pmatrix}0&0\\0&\Id
\end{pmatrix}
\label{IBG.87}\end{equation}
up to smooth homotopy. The connection on $\Psi^{-\infty}_{\phi,J}(F)$
induces a connection on the bundle of `Toeplitz' algebras 
\begin{equation}
\Pi\Psi^{-\infty}_{\phi,J}(F;\bbC^2)\Pi.
\label{IBG.88}\end{equation}

Following the discussion above of the untwisted case the curvature of this
connection again has a representation as in \eqref{IBG.61}
\begin{equation}
[R^{\Pi}_J,\cdot],\ R^{\pi}_J\in\CI(M;\Lambda ^2)\otimes_{\CI(M)}\Psi^{2}_{\phi,J}(F)
\label{IBG.89}\end{equation}
with the Bianchi identity \eqref{IBG.86} holding for the same form $T$ on $M.$
\begin{theorem}\label{IBG.90} The even twisted Chern character is defined by
  the direct analogue of \eqref{IBG.81} 
\begin{equation}
\Ch_J(\Pi)=\sum\limits_{k}\frac{1}{(2\pi
  i)^kk!}\Tr\left((R^{\Pi}_J)^k-(R^-_J)^k\right),\ [\Ch(\Pi)]\in H^{\ev}_{\dR}(M)
\label{IBG.91}\end{equation}
which gives an element of twisted cohomology $H_J^{ev}M)$
\begin{equation}
(d_M-T/2\pi i)\Ch_J(\Pi)=0.
\label{IBG.92}\end{equation}
\end{theorem}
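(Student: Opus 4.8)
The plan is to follow the proof of Proposition~\ref{IBG.80} essentially verbatim, the only new ingredient being the modified Bianchi identity \eqref{IBG.86}: it carries the closed $3$-form $T$ and is exactly what replaces the de Rham differential by the twisted differential $d_M-T/2\pi i$.

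First I would record, as in the untwisted case, that in any local trivialization of $F$ and simplicial reduction of $J$ the connection on the Toeplitz bundle \eqref{IBG.88} differs from the reference connection on $\Psi^{-\infty}_{\phi,J}(F;\bbC^2)$ by a smoothing $1$-form $\mu_J,$ so that $R^\Pi_J-R^-_J$ takes values in the twisted \emph{smoothing} operators. Hence $(R^\Pi_J)^k-(R^-_J)^k$ is smoothing for each $k,$ the forms $\omega_k:=\Tr\bigl((R^\Pi_J)^k-(R^-_J)^k\bigr)$ are globally defined on $M$ (with $\omega_0=\Tr(\Pi-\Pi_-)$ finite since $q$ is smoothing), and \eqref{IBG.91} makes sense. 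Independence of the choices of Ehresmann connection, simplicial connection on $J$ and B-field, and invariance under smooth homotopy of $\Pi,$ would then follow from the same transgression argument over $[0,1]_t\times M$ as in Proposition~\ref{IBG.80}; one checks there that the transgression $2$-form bounds the difference with respect to the twisted differential, which is automatic once \eqref{IBG.92} is available in families.

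The core step is to compute $d_M\omega_k$ in a local trivialization. Writing the two multiplicative connections as $\widetilde\nabla^\Pi_J=d_M+[\gamma^\Pi_J,\,\cdot\,]$ and $\widetilde\nabla^-_J=d_M+[\gamma^-_J,\,\cdot\,]$ with $\gamma^\Pi_J=\gamma^-_J+\mu_J,$ one finds $d_M\omega_k=\Tr\bigl(\widetilde\nabla^\Pi_J(R^\Pi_J)^k-\widetilde\nabla^-_J(R^-_J)^k\bigr)$ after discarding the commutator traces $\Tr[\gamma^-_J,(R^\Pi_J)^k-(R^-_J)^k]$ and $\Tr[\mu_J,(R^\Pi_J)^k],$ both of which vanish by the trace identity since in each case one of the factors is smoothing. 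I would then apply the graded Leibniz rule together with the Bianchi identity \eqref{IBG.86} --- valid for $R^\Pi_J$ and for $R^-_J$ with the same $3$-form $T$ --- and use that $T$ is a scalar $3$-form commuting through the even-degree factors to get $\widetilde\nabla^\Pi_J(R^\Pi_J)^k=k\,T\wedge(R^\Pi_J)^{k-1}\Pi$ and likewise with $\Pi_-.$ Taking traces, $T$ factors out and the residual operators differ by a smoothing one, so $d_M\omega_k=k\,T\wedge\omega_{k-1};$ summing against $(2\pi i)^{-k}/k!$ telescopes the series to $\tfrac{T}{2\pi i}\wedge\Ch_J(\Pi),$ which is \eqref{IBG.92}.

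The one genuinely new subtlety --- and the step I would be most careful about --- is the trace-class bookkeeping: $\Tr(R^\Pi_J)^k$ alone is meaningless, since by Proposition~\ref{IBG.60} the curvature $R_J$ has a non-smoothing (first-order differential) part, so every manipulation must be carried out with the smoothing difference $(R^\Pi_J)^k-(R^-_J)^k.$ In particular, where the Bianchi identity is invoked the scalar terms enter as $T\Pi$ and $T\Pi_-$ and it is their difference $T(\Pi-\Pi_-)$ --- a smoothing operator --- whose trace survives; that surviving scalar is precisely the twisting term. Beyond this I expect no obstacle, since Proposition~\ref{IBG.60} has already supplied a \emph{global} representative of the curvature with the clean Bianchi identity \eqref{IBG.86}, which is what makes the right-hand side of \eqref{IBG.92} globally defined.
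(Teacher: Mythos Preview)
Your proposal is correct and follows essentially the same approach as the paper: mimic the proof of Proposition~\ref{IBG.80} and use the modified Bianchi identity \eqref{IBG.86} at the final step to produce the extra $T$-term, yielding \eqref{IBG.93} and hence \eqref{IBG.92}. Your version is in fact more detailed than the paper's (which is a single sentence plus \eqref{IBG.93}), and your explicit attention to the trace-class bookkeeping---that only the smoothing difference $(R^\Pi_J)^k-(R^-_J)^k$ is traceable, and that the surviving scalar term is $T(\Pi-\Pi_-)$---is a welcome clarification the paper leaves implicit.
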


\begin{proof} The argument proceeds as in the untwisted case except at the
  last step where the modified Bianchi identity \eqref{IBG.86} means that 
\begin{equation}
\tilde\nabla_J^{\Pi}((R_J^{\Pi})^k)-\tilde\nabla_J(R_J^-)^k)=kT((R_J^{\Pi})^{k-1})-(R_J^-)^{k-1})
\label{IBG.93}\end{equation}
since the form $T$ commutes with $R_J^{\Pi}$ and $R_J^-.$ This gives \eqref{IBG.92}.
\end{proof}

The odd twisted Chern character can be analysed as in \S\ref{OddCC}. The
twisted Chern character in the general case was studied in \cite{BCMMS,MS,
  MS06}.

\section{Index in twisted cohomology}\label{Indcoh}

The twisted index formula in cohomology is a straightforward generalization
of the standard result.

\begin{theorem}\label{IBG.191} For the twisted index map as in Theorem \ref{IBG.25}, {the twisted Chern character}
\begin{equation}
\Ch_J(\ind_{\scl,J}(\alpha))=\phi_*(e^B\Ch(\alpha)\wedge\Td(\phi)),\quad a\in K^*(T^*_{\fib}F)
\label{IBG.192}\end{equation}
takes values in the twisted cohomology of the base  $H_J^{ev}(M).$
Here $\Ch_J: K^*(T^*_{\fib}F) \to H_J^{ev}(M)$ and $B$ is a B-field, which is a global 3-form on $F$
satisfying $F^J = pr_1^*(B) -  pr_2^*(B)$, where $F_J$ is the curvature of  a simplicial connection on $J$
and $pr_j$ is projection to the $j^{th}$ factor. In other words, the following diagram commutes:
\begin{equation}
\begin{gathered}
\xymatrix{
K^0(T^*_{\fib}F) \ar[r]^{\ind_{\scl,J}}\ar[d]^{\Td(F/Y) \wedge e^B\Ch}&K^0(Y, \phi^*\cA_J)\ar[d]^{\Ch_{J}}\\
H^{even}(T^*_{\fib}F, dB)\ar[r]^{e^{B_U}}& H^{even}_J(M)
}
\end{gathered}
\end{equation}

\end{theorem}

\begin{proof} 
As in the standard case it suffices to `follow' the twisted Chern
  character through the embedding proof of the index in K-theory. The
  crucial step of extending the index class from the fibration along the
  normal bundle to the embedding follows as in the standard case. Namely
  the Thom class in K-theory for the normal bundle is an untwisted even
  class which contributes the Todd class leading to \eqref{IBG.192}.
\end{proof}

\section{Projective families of Dirac operators and positive scalar curvature}

Consider the small bundle gerbe consisting of the fibre bundle $\phi:Y\to X$ and primitive line bundle 
$J\to Y^{[2]}$.  There is a  line bundle $K_\tau$ over $\phi^{-1}(U) \subset Y$
for trivializing open subsets $U \subset X$ of $J$, that is 
\begin{equation}
\label{trivializing}
J\big|_{\phi^{-1}(U)\times_U \phi^{-1}(U)} \cong K_\tau \boxtimes K_\tau^*,
\end{equation}
where $K_\tau^*$ denotes the dual line bundle to $K_\tau$. A connection $\nabla^\tau$ on
the line bundle $K_\tau$ determines a simplicial (or bundle gerbe) connection $\nabla^J = \nabla^\tau \boxtimes 1 + 1 \boxtimes  \nabla^{\tau *}$
where $\nabla^{\tau *}$ is the dual connection on $K_\tau^*$ .  If $F^J$ denotes the curvature of $\nabla^J$, then 
$F^J = pr_1^*(B) -  pr_2^*(B)$ for some 2-form $B$ on $Y$, and $pr_j$ is projection to the $j^{th}$ factor.
Then $(\nabla^J, B, dB=\phi^*(H)$ is said to be a connection on the bundle gerbe $(J, Y)$, see \cite{MS2}.
This was previously briefly discussed in Proposition \ref{IBG.60}.

 If the fibres of $Y$ are
even-dimensional and consistently oriented and with a consistent $\SpinC$ structure, 
that is the vertical tangent bundle $T(Y/X)$ is a $\SpinC$vector bundle,
let $\Cl(Y/X)$ denote the complex
bundle of Clifford algebras associated to some family of fibre metrics and
let $\bbS$ be the $\bbZ_2$-graded hermitian Clifford Spin module over $Y$ with
unitary Clifford connection $\nabla^\bbS.$

This data determines a projective family of Dirac operators $\eth_\bbS$
acting fibrewise on the sections of $\bbS.$ We can further twist
$\eth_\bbS$ by the connection $\nabla^\tau$ of 
%respecting the primitive property of
the line bundle $K_\tau$ over $\phi^{-1}(U) \subset Y$
for trivializing open subsets $U \subset X$ of $J$, as in \eqref{trivializing}.
%where $K_\tau^*$ denotes the dual line bundle to $K_\tau$.
%$Y^{[2]}$ - such a connection exists by
%Lemma~\ref{mms2002.71}.  
In this way, we get a family of twisted Dirac operators $\eth_{\bbS\otimes K_\tau}$
on the local fibre bundle $\phi:\phi^{-1}(U) \to U$, and
\begin{equation}
\label{Bochner}
\eth_{\bbS\otimes K_\tau}^2 = ({\nabla^{\bbS\otimes K_\tau}})^*  \nabla^{\bbS\otimes K_\tau} + \kappa + 1/2c(F_L) + c(F_\tau)
\end{equation}
is the Bochner-Weitzenbock formula for this family. Here $\kappa$ denotes the scalar curvature along the fibres
and $F_\tau$ is the curvature of a hermitian connection on $K_\tau$, $F_L$ is the curvature of the $\SpinC$ line bundle.
 The sum $1/2 c(F_L) + c(F_\tau)$ is a skew symmetric endomorphism, 
using the fibred Riemannian metric, with parametrised eigenvalues $\{\pm i\lambda_k\}$. Then we have 
the following vanishing theorem,

\begin{lemma}{\label{lemma Dirac}}
Suppose that $\kappa >  4 \sup_U \sum |\lambda_k|$. Then 
$$
{\rm index}(\eth_{\bbS\otimes K_\tau}) =0 \in K^0(U) \stackrel{\Lambda_U}{\cong} K^0(U, \cA_U).
$$
for any choice of section $\tau$.
Taking the Chern character, we see that 
$$
0=\Ch({\rm index}(\eth_{\bbS\otimes K_\tau}))
= \int_{\phi^{-1}(U)/U} {\widehat A}(Y/X) e^{1/2\, c_1(L)}e^{c_1(K_\tau)}
$$
Equivalently, taking the twisted Chern character,
$$
0=\Ch_{B_U}(\Lambda_U({\rm index}(\eth_{\bbS\otimes K_\tau})))
= e^{B_U} \int_{\phi^{-1}(U)/U} {\widehat A}(Y/X) e^{1/2\, c_1(L)}e^{c_1(K_\tau)}
$$
Here $\tau:U\longrightarrow \phi^{-1}(U),$ is a local section and $B_U = \tau^*(B)$.

\end{lemma}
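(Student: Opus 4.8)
The plan is to obtain the vanishing of the index from the Bochner-Weitzenbock formula \eqref{Bochner} and then to read off the two cohomological identities from the families index theorem, working throughout over the trivializing open set $U\subset X$, where the restricted Azumaya bundle $\cA_U$ is Morita trivial. First I would check that, under the hypothesis, the zeroth-order term of \eqref{Bochner} is a fibrewise positive operator that is bounded below by a positive constant uniformly over $U$. Writing $\tfrac12 c(F_L)+c(F_\tau)=c(\tfrac12 F_L+F_\tau)$, the Clifford action of the real $2$-form $\tfrac12 F_L+F_\tau$ on $\bbS\otimes K_\tau$ has pointwise operator norm controlled by $\sum|\lambda_k|$, so with the normalization of $\kappa$ in \eqref{Bochner} the assumption $\kappa>4\sup_U\sum|\lambda_k|$ forces $\kappa+\tfrac12 c(F_L)+c(F_\tau)\ge\delta>0$ on every fibre over $U$. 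Then $\langle\eth_{\bbS\otimes K_\tau}^2 s,s\rangle=\|\nabla^{\bbS\otimes K_\tau}s\|^2+\langle(\kappa+\tfrac12 c(F_L)+c(F_\tau))s,s\rangle\ge\delta\|s\|^2$, so $\eth_{\bbS\otimes K_\tau}$ is invertible on every fibre over $U$, with uniformly bounded inverse; the fibrewise kernels of $\eth_{\bbS\otimes K_\tau}^{\pm}$ therefore both vanish and the families index $[\ker\eth^+]-[\ker\eth^-]$ is $0\in K^0(U)$.

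Next I would transport this to twisted K-theory. The trivialization $J\big|_{\phi^{-1}(U)\times_U\phi^{-1}(U)}\cong K_\tau\boxtimes K_\tau^*$ Morita-trivializes $\cA_U$ and gives the isomorphism $\Lambda_U\colon K^0(U)\to K^0(U,\cA_U)$; under $\Lambda_U$ the class $\operatorname{index}(\eth_{\bbS\otimes K_\tau})$ is the twisted index of the projective family $\eth_\bbS$, which is intrinsic and hence independent of the choice of $\tau$. Since the first step produces the vanishing for the given $\tau$, the class vanishes in $K^0(U,\cA_U)$, and therefore for every admissible section $\tau$.

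To extract the Chern character identities I would invoke the index theorem. Over $U$, after Morita trivialization, $\eth_{\bbS\otimes K_\tau}$ is an ordinary family of $\SpinC$ Dirac operators twisted by $K_\tau$, so the families index theorem---the untwisted specialization of Theorem~\ref{IBG.191}, with the $\SpinC$ Dirac symbol inserted---gives $\Ch(\operatorname{index}(\eth_{\bbS\otimes K_\tau}))=\int_{\phi^{-1}(U)/U}\widehat A(Y/X)\,e^{\frac12 c_1(L)}\,e^{c_1(K_\tau)}$, with $L$ the $\SpinC$ line bundle and $K_\tau$ the twisting bundle; together with the vanishing of the index this is the first displayed identity. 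For the second, apply the twisted Chern character $\Ch_{B_U}$ of Theorem~\ref{IBG.90}, attached to the local B-field $B_U$ coming from the chosen simplicial trivialization as in Proposition~\ref{IBG.60}: since $\Ch_{B_U}\circ\Lambda_U=e^{B_U}\wedge\Ch$ on $K^0(U)$, one gets $\Ch_{B_U}(\Lambda_U(\operatorname{index}(\eth_{\bbS\otimes K_\tau})))=e^{B_U}\int_{\phi^{-1}(U)/U}\widehat A(Y/X)e^{\frac12 c_1(L)}e^{c_1(K_\tau)}$, which vanishes by the same token; equivalently this is Theorem~\ref{IBG.191} applied to the trivially twisted, vanishing, index class.

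The hard part lies at the interface of the first two steps: one must make the Bochner estimate uniform in the parameter so that the \emph{index bundle}, not merely the pointwise kernels, is zero, and one must confirm that under $\Lambda_U$ the ordinary index of $\eth_{\bbS\otimes K_\tau}$ really does represent the $\tau$-independent twisted index of $\eth_\bbS$. Beyond that, pinning down the normalization constants---the exact factor relating $\kappa$ in \eqref{Bochner} to the fibrewise scalar curvature and the operator norm of $c(\tfrac12 F_L+F_\tau)$---so that $\kappa>4\sup_U\sum|\lambda_k|$ indeed yields strict positivity is routine bookkeeping.
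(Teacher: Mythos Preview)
Your proof is correct and follows the paper's approach in its essentials: positivity from the Bochner--Weitzenbock formula \eqref{Bochner} gives fibrewise invertibility and hence vanishing index, the families index theorem yields the cohomological identity, and the commutativity $\Ch_{B_U}\circ\Lambda_U=e^{B_U}\wedge\Ch$ (which the paper records as a small commutative square) gives the twisted version.

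The one genuine difference is how you handle independence of $\tau$. You argue abstractly: the image $\Lambda_U(\operatorname{index}(\eth_{\bbS\otimes K_\tau}))$ is the intrinsic twisted index of the projective family, hence $\tau$-independent, and since each $\Lambda_U^{\tau}$ is an isomorphism the untwisted index vanishes for every $\tau$. The paper instead argues concretely: two choices $\tau,\tau'$ give $K_{\tau}=K_{\tau'}\otimes\phi^*(L_{\tau,\tau'})$ for a line bundle on $U$, and since the curvature of $\phi^*(L_{\tau,\tau'})$ is purely horizontal it contributes nothing to the fibrewise Clifford term $c(F_\tau)$ in \eqref{Bochner}, so the eigenvalues $\lambda_k$ and the positivity estimate are literally unchanged. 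Your route is cleaner conceptually but imports the statement that $\Lambda_U$ carries the ordinary families index to the intrinsic twisted one; the paper's route is more elementary and self-contained, and has the side benefit of showing that the hypothesis $\kappa>4\sup_U\sum|\lambda_k|$ is itself $\tau$-independent, not merely its consequence.
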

\begin{proof}
Under the assumption of the Lemma, and in the notation above,
we see that $\eth_{\bbS\otimes K_\tau}^2 >0$, therefore we conclude
the first part.

Another choice of section
$\tau':U\longrightarrow \phi^{-1}(U),$ determines another line bundle
$K_{\tau'}$ over $\phi^{-1}(U)\subset Y,$ satisfying
\begin{equation}
K_{\tau} = K_{\tau'} \otimes \phi^*(L_{\tau, \tau'}),
\end{equation}
where $L_{\tau, \tau'} = (\tau, \tau')^*J$.
Note that the curvature of $ \phi^*(L_{\tau, \tau'})$ is horizontal, so is trivial along the fibres of 
$\phi:\phi^{-1}(U) \to U$, therefore it does not contribute 
to the Bochner-Weitzenbock formula. Finally, clearly the following diagram commutes.

\begin{equation}
\begin{gathered}
\xymatrix{
K^0(U) \ar[r]^{\Lambda_U}\ar[d]^{\Ch}&K^0(U, \cA_U)\ar[d]^{\Ch_{B_U}}\\
H^{even}(U)\ar[r]^{e^{B_U}}& H^{even}(U, dB_U)
}
\end{gathered}
\end{equation}

This completes the proof of the Lemma.
\end{proof}

Assuming that the space $X$ is compact, let $U_1, \ldots U_N$ be a finite trivializing open cover of $\phi:Y\to X$, and choosing
finite local sections $\tau_j:U_j\longrightarrow \phi^{-1}(U_j),$ for $j=1, \ldots N$, we see that

\begin{theorem}
In the notation above, suppose that the fibrewise scalar curvature $\kappa$ satisfies
$$\kappa >  4 \max_{j=1}^N   \sup_{U_j} \sum |\lambda_k|,$$ then the index of the projective 
family of Dirac operators vanishes,
$$
\ind_{\scl,J}(\alpha) = \ind_J (\eth_{\bbS\otimes K}) =0 \in K^0(X, \cA),
$$
where $a\in K^\bullet(T^*_{\fib}Y)$ is the symbol of the projective family of Dirac operators.

Taking the twisted Chern character, and using Theorem \ref{IBG.191} , we see that
$$
0=\Ch_J(\ind_{\scl,J}(\alpha) )
%= \int_{Y/X} {\widehat A}(Y/X)e^{1/2\, c_1(L)} \Ch_{\phi^*\cA} (\Lambda^{-1}(1)) 
=   \int_{Y/X} {\widehat A}(Y/X) e^B e^{1/2\, c_1(L)+c} 
$$
where $c= c_1(K_{\tau_j})$ on $U_j$, which we will argue below that although $K_{\tau_j}$ is not well defined globally, the integrand is
globally well defined.

\end{theorem}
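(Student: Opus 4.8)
The plan is to deduce the theorem from the local vanishing of Lemma~\ref{lemma Dirac} by a globalisation argument, and then to read off the cohomological identity from the twisted families index theorem (Theorems~\ref{IBG.25} and \ref{IBG.191}) together with a \v Cech--de~Rham bookkeeping step. \textbf{Step 1: global fibrewise invertibility.} Since $X$ is compact the trivialising cover $\{U_j\}_{j=1}^{N}$ is finite, so the hypothesis $\kappa>4\max_{j=1}^{N}\sup_{U_j}\sum|\lambda_k|$ restricts, over each $U_j$ separately, to the hypothesis $\kappa>4\sup_{U_j}\sum|\lambda_k|$ of Lemma~\ref{lemma Dirac}; by the Bochner--Weitzenb\"ock identity \eqref{Bochner} this forces $\eth_{\bbS\otimes K_{\tau_j}}^{2}>0$ on every fibre over every point of $U_j$, i.e.\ $\eth_{\bbS\otimes K_{\tau_j}}$ is fibrewise invertible over $U_j$. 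These local operators are exactly the representatives of the projective family $\eth_\cA$ under the trivialisations $J|_{\phi^{-1}(U_j)\times_{U_j}\phi^{-1}(U_j)}\cong K_{\tau_j}\boxtimes K_{\tau_j}^{*}$, and over an overlap $U_j\cap U_k$ they are intertwined by the fibrewise unitary induced by $\phi^{*}(L_{\tau_j,\tau_k})$, whose curvature is horizontal and hence invisible to the fibre operator (the observation already used in the proof of Lemma~\ref{lemma Dirac}). Thus fibrewise invertibility of $\eth_\cA$ at a point $x\in X$ is well defined, independently of $\tau$, and since the $U_j$ cover $X$ it holds everywhere.

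\textbf{Step 2: vanishing of the index class.} I expect this to be the only conceptually delicate point. Twisted K-theory is not a sheaf, so $\operatorname{index}(\eth_\cA)=0$ cannot be concluded from its vanishing on each $U_j$ in isolation; instead one uses that all those local vanishings descend from a \emph{single} globally fibrewise-invertible operator. The index of a projective Dirac family is the formal difference $[\ker\eth_\cA^{+}]\ominus[\ker\eth_\cA^{-}]$ of finite-rank projective (Morita-twisted) Hilbert subbundles of the fibrewise $L^{2}$ bundle, after the standard stabilisation to absorb rank jumps; when $\eth_\cA$ is fibrewise invertible both subbundles are the zero object, so $\operatorname{index}(\eth_\cA)=0\in K^{0}(X,\cA)$ (equivalently, by Theorems~\ref{IBG.25} and \ref{IBG.137}, the representative obtained by semiclassical quantisation of the symbol may be taken invertible). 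Applying the twisted Chern character of Theorem~\ref{IBG.90} then gives $0=\Ch_\cA(\operatorname{index}(\eth_\cA))$ in $H^{\even}(X,dB)$.

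\textbf{Step 3: the cohomological formula.} Independently of Step~2, the cohomological form of the twisted families index theorem, Theorem~\ref{IBG.191}, applied to $\alpha=[\sigma(\eth_\cA)]$ (with $\operatorname{index}(\eth_\cA)$ identified with $\ind_{\scl,J}$ of its symbol class via Theorem~\ref{IBG.25}) and using the standard computation $\Ch(\sigma(\eth_\cA))\wedge\Td(\phi)=\widehat A(Y/X)\,e^{\frac12 c_1(L)}$ twisted by $\phi^{*}\cA$, yields $\Ch_\cA(\operatorname{index}(\eth_\cA))=\int_{Y/X}\widehat A(Y/X)\,e^{\frac12 c_1(L)}\,\Ch_{\phi^{*}\cA}(\Lambda^{-1}(1))$ --- the global counterpart of the local identity already recorded in Lemma~\ref{lemma Dirac}, $\Ch_{\phi^{*}\cA}(\Lambda^{-1}(1))$ being the twisted Chern character of the canonical Morita-trivialising class of the (trivial, over $Y$) Azumaya bundle $\phi^{*}\cA$. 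Combining with Step~2, this fibre integral vanishes. To reach the stated form, observe that over $\phi^{-1}(U_j)$ the trivialisation by $K_{\tau_j}$ represents $\Ch_{\phi^{*}\cA}(\Lambda^{-1}(1))$ by $e^{\phi^{*}b_j}\,e^{c_1(K_{\tau_j})}$, where $b_j$ is a $2$-form on $U_j$ arising from a curving of the gerbe (so $db_j=H|_{U_j}$, with $H$ the global $3$-curvature); by the projection formula $e^{\phi^{*}b_j}$ pulls out of the fibre integral, giving $e^{b_j}\int_{Y/U_j}\widehat A(Y/X)e^{\frac12 c_1(L)}e^{c_1(K_{\tau_j})}$ over $U_j$, which is precisely $e^{B}\int_{Y/X}\widehat A(Y/X)e^{\frac12 c_1(L)+c}$ with $c=c_1(K_{\tau_j})$ on $U_j$.

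\textbf{Step 4: global well-definedness of the integrand.} On $U_j\cap U_k$ one has $K_{\tau_j}=K_{\tau_k}\otimes\phi^{*}(L_{\tau_j,\tau_k})$, so $c_1(K_{\tau_j})-c_1(K_{\tau_k})=\phi^{*}c_1(L_{\tau_j,\tau_k})$, which under fibre integration contributes a factor $e^{c_1(L_{\tau_j,\tau_k})}$; since the $L_{\tau_j,\tau_k}$ are exactly the gerbe's transition line bundles, the curving cocycle relation $b_j-b_k=-c_1(L_{\tau_j,\tau_k})$ makes $e^{B}$ jump by the reciprocal factor $e^{-c_1(L_{\tau_j,\tau_k})}$, and the two cancel. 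Hence $e^{B}\int_{Y/X}\widehat A(Y/X)e^{\frac12 c_1(L)+c}$ patches to a single globally defined even form on $X$, closed for the twisted differential, representing a class in $H^{\even}(X,dB)$, which is zero by Step~2 --- this is the sense in which the integrand, though written with the locally defined $K_{\tau_j}$, is globally well defined. The remaining checks --- independence of the choices of $\tau_j$ throughout, the stabilisation in Step~2, and the precise curving sign conventions in Step~4 --- are routine; the substantive content is the globalisation in Steps~1--2 resting on the estimate already proved in Lemma~\ref{lemma Dirac}.
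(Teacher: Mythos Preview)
Your argument is correct, and in the crucial Step~2 it takes a cleaner route than the paper. The paper establishes the vanishing of $\operatorname{index}(\eth_\cA)\in K^0(X,\cA)$ by an inductive Mayer--Vietoris argument in twisted K-theory: it recalls the long exact sequence for a decomposition $W=U\cup V$, feeds in the local vanishing from Lemma~\ref{lemma Dirac} on $U_1$ and $U_2$ to get vanishing on $U_1\cup U_2$, and then iterates over the finite cover. You instead observe that fibrewise invertibility of the projective family is a gauge-invariant notion (the transition isomorphisms $\phi^*(L_{\tau_j,\tau_k})$ being fibrewise unitary), so the single global object $\eth_\cA$ is fibrewise invertible and its index vanishes on the nose. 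This bypasses the Mayer--Vietoris machinery entirely and is more transparent; the paper's approach, by contrast, would generalise more readily to situations where one only has local index-vanishing without a global invertible representative.

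For the well-definedness of the integrand (your Step~4), the paper's argument is slightly simpler than yours: since $K_{\tau_i}=K_{\tau_j}\otimes\phi^*(L_{\tau_i,\tau_j})$ and the extra factor is pulled back from the base, one has directly $\phi_*\bigl(e^{c_1(K_{\tau_i})}\wedge\cdots\bigr)=e^{c_1(L_{\tau_i,\tau_j})}\phi_*\bigl(e^{c_1(K_{\tau_j})}\wedge\cdots\bigr)$, so the fibre integral itself already patches (up to the expected twisted-cohomology ambiguity), without needing to track the compensating change in $e^{B}$. Your version, tracking both $B$ and $c$ through the \v Cech cocycle, reaches the same conclusion with a little more bookkeeping.
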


\begin{proof}
We begin with a discussion of $c$ above. Suppose that $U_i \cap U_j \ne \emptyset$. Then $K_{\tau_i} = K_{\tau_j} \otimes \phi^*(L_{\tau_i, \tau_j})$,
therefore $\phi_* \exp(c_1(K_{\tau_i} ))=\phi_*\exp(c_1( K_{\tau_j}))$, that is $\phi_*(\exp(c))$ is globally defined. Therefore the integrand
% in the Lemma \ref{lemma Dirac} above is
above is globally well defined.
%We will use  Lemma \ref{lemma Dirac}  and the Mayer-Vietoris sequence in twisted K-theory (see below) to prove the Theorem.
%We begin by recalling the 
%{\bf Mayer-Vietoris sequence in twisted K-theory.} If $W=U\cup V$ is a decomposition   by open subsets, then we can find a function
%$\phi:W\rightarrow \R$ such that $\phi_{|X\setminus U}=1$,
%$\phi_{W\setminus V}=-1$, and the inclusion
%$i\colon (Y:=\{\phi=0\})\rightarrow W$ is a proper naturally $h$-oriented map.
%Let $j:Y\rightarrow U\cap V$,
%$g:U\rightarrow W$, $h:V\rightarrow W$, $k:U\cap V\rightarrow U$,
%$l:U\cap V\rightarrow V$, and $r:U\cap V\rightarrow W$  denote the
%inclusions, and define $\delta:=i_!\circ j^*$\footnote{Note that $\delta$ is
%independent of the choice of $\phi$.}.
%Then the following sequence is exact:
%$$\dots \rightarrow K^1(U\cap V,r^*\cA)\stackrel{\delta}{\rightarrow}
%K^0(W,\cA)\stackrel{(g^*,h^*)}{\rightarrow} K^0(U,g^*\cA)\oplus
%K^0(V,h^*\cA)\stackrel{k^*-l^*}{\rightarrow} K^0(U\cap
%V,r^*\cA)\rightarrow \dots \ ,$$
%%where some canonical isomorphisms are suppressed in the notation.
%Applying Lemma \ref{lemma Dirac} to $U_1, U_2$ and the Mayer-Vietoris sequence above,
%we conclude that ${\rm index}(\eth_{\bbS\otimes K_{\tau_1 \cup \tau_2}}) =0 \in K^0(U_1\cup U_2, \cA_{12}).$
%Applying the Mayer-Vietoris sequence inductively, we conclude the proof of the Theorem.

\end{proof}


\begin{thebibliography}{10}

\bibitem{Agol}
 I. Agol,  Virtual properties of 3-manifolds. Proceedings of the International Congress of Mathematicians,Seoul 2014. Vol. 1, 141--170, Kyung Moon Sa, Seoul, 2014. 
MR3728467

\bibitem {At68}  M.~F. Atiyah, Bott periodicity and the index of elliptic operators. Quart. J. Math. Oxford Ser. (2) 19 (1968), 113--140.

\bibitem {ABS}  M.~F. Atiyah, R. Bott, A. Shapiro, 
Clifford modules.
Topology 3 (1964), no. suppl, suppl. 1, 3--38.


\bibitem {AS1}  M.~F. Atiyah, G. Segal, Twisted K-theory and cohomology. Inspired by S. S. Chern. Nankai Tracts Math. 11 World Sci. Publ., Hackensack, NJ (2006) 

\bibitem {AS2}  M.~F. Atiyah, G. Segal, Twisted K-theory. Ukr. Mat. Visn. 1, 287--330 (2004)

\bibitem{BG} M. Benameur, A. Gorokhovsky, Local index theorem for projective families. Perspectives on noncommutative geometry, 1--27, Fields Inst. Commun., 61, Amer. Math. Soc., Providence, RI, 2011.

\bibitem{BBMBP}
L. Bessi\'eres, G. Besson, S. Maillot, M. Boileau, J. Porti, Geometrisation of 3-manifolds. EMS Tracts in Mathematics, 13. European Mathematical Society (EMS), Z\"urich, 2010. 

\bibitem{Bismut} J-M. Bismut,  The Atiyah-Singer index theorem for families of Dirac operators: two heat equation proofs. Invent. Math. 83 (1986), no. 1, 91--151. 
%R. Bott, {\em The stable homotopy of the classical groups}. Ann. of Math. (2) {\bf 70} (1959) 313-337. 
%MR0110104
%
\bibitem{BCMMS}
P. Bouwknegt, A. Carey, V. Mathai, M. Murray and D. Stevenson,
{\it Twisted K-theory and K-theory of bundle gerbes},
Comm. Math. Phys. {\bf 228} (2002) 17-45,
[{\tt arXiv:hep-th/0106194}].

\bibitem{Brown}
K. S. Brown,  Cohomology of groups. GTM 87. Springer-Verlag, New York, 1982.


\bibitem{Brylinski}
J-L.Brylinski 
Loop spaces, characteristic classes and geometric quantization. 
Progress in Mathematics, 107. Birkhauser Boston, Inc., Boston, MA, 1993.
MR1197353

%\bibitem{BG}
%J-L.Brylinski and E. Getzler, 
% {\em The homology of algebras of pseudodifferential symbols and the noncommutative residue}.
%$K$-Theory 1 (1987), no. 4, 385--403.
%MR0920951 (89j:58135)
%
%\bibitem{BM}
%J.-L. Brylinski and D.A. McLaughlin, 
% {\em The geometry of degree-four characteristic classes and of line bundles on loop spaces. I}. 
%Duke Math. J. 75  (1994), (3), 603--638.

%\bibitem{DK} 
%J. Davis and P. Kirk,
%Lecture notes in algebraic topology. 
%Graduate Studies in Mathematics, 35. American Mathematical Society, Providence, RI, 2001.
%MR1841974 (2002f:55001)  


%\bibliography{all,My}
%\bibliographystyle{amsplain}

\bibitem{DD} 
J.~Dixmier and A.~Douady, 
{ Champs continus d'espaces hilbertiens et de C$^*$-alg\`ebres}. Bull. Soc. Math. France 91 (1963) 227-284. 
MR0163182


\bibitem{HW}
F. Haglund, D. Wise, Special cube complexes. Geom. Funct. Anal. 17, (2008) 1551--1620 

\bibitem{Hatcher}
A. Hatcher, The classification of 3-manifolds -- a brief survey. \url{https://pi.math.cornell.edu/~hatcher/Papers/3Msurvey.pdf}.

\bibitem{Hatcherbk}
A. Hatcher,  Algebraic topology. Cambridge University Press, Cambridge, 2002. xii+544 pp.

\bibitem{Hormander-Weyl} L. H{\"o}rmander, The {W}eyl calculus of
  pseudo-differential operators, Comm. Pure Appl. Math. 32 (1979), 359-443.

\bibitem{Ho3} 
L. H\"ormander,
The analysis of linear partial differential operators. III. Pseudodifferential operators. 
Grundlehren der Mathematischen Wissenschaften, 
274. Springer-Verlag, Berlin, 1985. 
MR0781536 (87d:35002a)  



\bibitem{Kirby}
R. Kirby, Problems in low-dimensional topology, Geometric topology (Athens,
GA, 1993) (Rob Kirby, ed.), AMS/IP Stud. Adv. Math., vol. 2, Amer. Math. Soc.,
Providence, RI, 1997, pp. 35--473.

\bibitem{Bigerbes}
C. Kottke and R. B. Melrose, Bigerbes. Algebr. Geom. Topol. 21 (2021), no. 7, 3335--3399.

\bibitem{MM17}
V. Mathai and R. B. Melrose,
{\em Geometry of pseudodifferential algebra bundles and Fourier integral operators}. 
Duke Math. J. {\bf 166} (2017), no. 10, 1859--1922.
MR3679883

\bibitem{MMS1}
V. Mathai, R. B. Melrose, I. M. Singer, 
{\em The index of projective families of elliptic operators}.
Geometry and Topology,  {\bf 9} (2005) 341-373.  [math.DG/0206002]  
MR2140985

\bibitem{MMS2}
%V. Mathai, R. B. Melrose, I. M. Singer, 
\bysame,
{\em The index of projective families of elliptic operators: the decomposable case}.
 Ast\'erisque, {\bf 327} (2009) 251-292.   [arXiv:0809.0028] 
MR2674880
 
\bibitem{MMS3}
\bysame,
{\em Fractional Analytic Index},
   J. Differential Geometry, {\bf 74} no. 2 (2006) 265-292. 
[math.DG/0402329]
 MR2258800


 
  \bibitem{MorganTian1}
J. Morgan, G. Tian, Ricci flow and the Poincar\'e conjecture. Clay Mathematics Monographs, 3. American Mathematical Society, Providence, RI; Clay Mathematics Institute, Cambridge, MA, 2007. xlii+521 pp
 
   \bibitem{MorganTian2}
J. Morgan, G. Tian, The geometrization conjecture. Clay Mathematics Monographs, 5. American Mathematical Society, Providence, RI; Clay Mathematics Institute, Cambridge, MA, 2014. x+291 pp. 

  \bibitem{MQ}
 V. Mathai, D.G. Quillen, Superconnections, Thom classes, and equivariant differential forms. Topology 25 (1986), no. 1, 85--110.


  \bibitem{MS}
 V. Mathai, D. Stevenson, Chern character in twisted K-theory: equivariant and holomorphic cases, Commun. Math. Phys. 236 (2003) 161-186.
 
   \bibitem{MS06}
 V. Mathai, D. Stevenson, On a generalised Connes-Hochschild-Kostant-Rosenberg theorem,
 Advances in Mathematics, 200 (2006) 303-335.

   \bibitem{GenProd}
 R. B. Melrose, Generalized products and semiclassical quantization, In preparation.

 \bibitem{MS2}
M. K. Murray and D. Stevenson.  
{\em Bundle gerbes: stable isomorphism and local theory}. 
J. London Math. Soc. (2) 62 (2000), no. 3, 925--937.
 MR1794295
 
  \bibitem{MS3}
M. K. Murray and D. Stevenson.  
{\em A note on bundle gerbes and infinite-dimensionality. }
J. Aust. Math. Soc. 90 (2011), no. 1, 81--92.
 MR2810945 
 
%\bibitem{Omori} H. Omori,  Infinite-dimensional Lie groups. Translations of
%  Mathematical Monographs, 158. American Mathematical Society, Providence,
%  RI, 1997.
%  MR1421572
  
  
  \bibitem{Otal14} J.-P. Otal, William P. Thurston: "Three-dimensional manifolds, Kleinian groups and hyperbolic geometry''. Jahresber. Dtsch. Math.-Ver. 116 (2014), no. 1, 3--20. 
  MR3179624 
  
\bibitem{Parmeggiani} A. Parmeggiani, Spectral theory of non-commutative
  harmonic oscillators: an introduction, Lecture Notes in Mathematics, 1992
  (2010), Springer-Verlag, Berlin.

\bibitem{Scott83}
P. Scott, The geometries of 3-manifolds. Bull. London Math. Soc. 15 (1983), no. 5, 401--487.
MR0705527  

%\bibitem{Seeley1}
%R.T. Seeley, \emph{Complex powers of elliptic operators}, Proc. Symp. Pure
%  Math. \textbf{10} (1967), 288--307.
%MR0237943

\bibitem{Thurs}
W.P. Thurston,  
Three-dimensional manifolds, Kleinian groups and hyperbolic geometry.  
Bull. Amer. Math. Soc. (N.S.)  {\bf 6}  (1982), no. 3, 357--381

\bibitem{Wolff} 
J. A. Wolf, Spaces of constant curvature. Sixth edition. AMS Chelsea Publishing, Providence, RI, 2011. 
MR2742530 



\end{thebibliography}
\end{document}